\theoremstyle{plain}
\theoremstyle{plain}
\newtheorem{theorem}{Theorem}[section]
\newtheorem{corollary}[theorem]{Corollary}
\newtheorem{lemma}[theorem]{Lemma}
\newtheorem{proposition}[theorem]{Proposition}
\theoremstyle{definition}
\newtheorem{question}[theorem]{Question}
\newtheorem{remark}[theorem]{Remark}
\newcommand*\bigcdot{\mathpalette\bigcdot@{.5}}
\newcommand*\bigcdot@[2]{\mathbin{\vcenter{\hbox{\scalebox{#2}{$\m@th#1\bullet$}}}}}
\renewcommand{\leq}{\leqslant}
\renewcommand{\geq}{\geqslant}
\newcommand{\N}{\mathbb{N}}
\newcommand{\R}{\mathbb{R}}
\newcommand{\ind}[1]{\mathbf{1}_{\left\{#1\right\}}}
\newcommand{\floor}[1]{{\left\lfloor #1 \right\rfloor}}
\DeclareMathOperator{\E}{\mathbf{E}}
\renewcommand{\P}{\mathbf{P}}
\renewcommand{\bar}[1]{\overline{#1}}
\renewcommand{\tilde}[1]{\widetilde{#1}}
\renewcommand{\epsilon}{\varepsilon}
\renewcommand{\phi}{\varphi}
\newcommand{\T}{\mathcal{T}}
\newcommand{\Tfull}{\mathcal{T}_{\hbox{\tiny{full}}}}
\newcommand{\defeq}{:=}
\newcommand{\K}{\mathbf{K}}
\newcommand{\fils}[2]{{#1}^{#2}}
\newcommand{\pere}[1]{{#1}^{0}}
\title{A phase transition for the biased tree-builder random walk}
\author{Arthur Blanc-Renaudie\thanks{CNRS and LMRS, Universit\'e de Rouen. Email: \url{ablancrenaudiepro@gmail.com}}, Camille Cazaux\thanks{LPSM, Sorbonne Universit\'e. Email: \url{ccazaux@lpsm.paris}}, \\  Guillaume Conchon-Kerjan\thanks{Department of Mathematics, King's College London. Email: \url{guillaume.conchon-kerjan@kcl.ac.uk}}, Tanguy Lions\thanks{ENS Lyon. Email: \url{tanguy.lions@ens-lyon.fr}}, Arvind Singh\thanks{CNRS and LMO, Université Paris-Saclay. Email: \url{arvind.singh@universite-paris-saclay.fr} Partially supported by \texttt{ANR 19-CE40-0025} ``ProGraM''}}
\date{\today}
\begin{document}

\maketitle

\begin{abstract}
We consider a recent model of random walk that recursively grows the network on which it evolves, namely the Tree Builder Random Walk (TBRW). We introduce a bias $\rho \in (0,\infty)$ towards the root, and exhibit a phase transition for transience/recurrence at a critical threshold $\rho_c =1+2\overline{\nu}$, where $\overline{\nu}$ is the (possibly infinite) expected number of new leaves attached to the walker's position at each step. This generalizes previously known results, which focused on the unbiased case $\rho=1$. 
\\
The proofs rely on a recursive analysis of the local times of the walk at each vertex of the tree, after a given number of returns to the root. 
\\
We moreover characterize the strength of the transience (law of large numbers and central limit theorem with positive speed) via standard arguments, establish recurrence at $\rho_c$, and show a condensation phenomenon in the non-critical recurrent case. 
\end{abstract}

\smallskip

\noindent\textbf{Keywords:} random walk, random trees, multi-type branching processes, phase transition

\noindent\textbf{AMS Classification 2020:} Primary 60K35; 60K37 Secondary 05C80

\smallskip

\section{Introduction}

The Tree Builder Random Walk (TBRW), introduced in full generality in~\cite{TBRWZuaznabar}, is a prime instance of a random walk that modifies the local structure of the network along its trajectory, see e.g.~\cite{AMIRal} for a broad formulation and~\cite{NRRW} for a partly intersecting model. In simple words, the TBRW is a random walk on a tree such that at time-step $n$, a random number $\xi_n$ of leaves are attached to the current position of the walker. When $(\xi_n)_{n\geq 1}$ is i.i.d., the TBRW is known to be transient and even ballistic~\cite{TBRWtransience,TBRWZuaznabar}, with a positive speed~\cite{TBRWtransience,ribeiroTBRW} and a renewal structure~\cite{ribeiroTBRW}.

Hence, it is natural to seek to increase the chances of returning to the root. One way to do it is to decrease the probability to grow a new vertex at the $n$-th time step, i.e. $\xi_n\sim \text{Ber}(p_n)$, which makes the TBRW recurrent if $p_n=n^{-\gamma}$ with $\gamma>1/2$~\cite{TBRWEnglander}. When $\gamma \in (2/3,1]$, the tree on which the random walk evolves grows like a Barab\'asi-Albert preferential attachment model~\cite{TBRWEnglander2}. Note that the case $\gamma \leq 1/2$ is still open, with the walk being conjectured to be transient for $\gamma <1/2$. 

Another intuitive option is to add a bias towards the root for the walker's trajectory, and this is the purpose of the present paper. Due to the local structure changing around the walker, this model is noticeably different from usual biased random walks, e.g. on Galton-Watson trees (see~\cite{LPP} and the rich literature that followed).

We consider the following model which constructs a growing sequence $(\T_n,\;n\geq 0)$ of random trees together with a random walk $(S_n,\;n\geq 0)$ moving on those trees. 

\medskip

Fix $\rho > 0$ and a probability distribution $\nu$ on $\N = \{0,1,\ldots\}$ which are the two parameters of the model. In the following, we always assume that $\nu(0) < 1$ (otherwise the model is trivial). At time~$0$, the tree $\T_0$ consists of a single vertex $o$ (the root) and the position of the walk is $S_0 =  o$.  Then, by induction, given $\T_n$ and $S_n$: 
\begin{itemize}
\item First, independently of everything else, we grow $\T_{n+1}$ from $\T_n$ by adding a random number $\xi_n$ of leaves drawn according to $\nu$. All these leaves are attached to the vertex $S_n$. In particular, with probability $\nu(0)$, we have $\T_{n+1} = \T_n$. 
\item Second, we define $S_{n+1}$ as a neighbor of $S_{n}$ in $\T_{n+1}$ chosen with bias $\rho$ toward the root. More precisely, if $S_n  = v \in  \T_n$  and if we denote by $\pere{v}$ the father  of $v$ and by $\fils{v}{1},\ldots, \fils{v}{k}$ the children of $v$ in $\T_{n+1}$, then 
$$
\P(S_{n+1} = \fils{v}{i} \;|\; \T_{n+1}, S_n) = 
\begin{cases}
\frac{\rho}{k+\rho}&\hbox{if $i=0$,}\\
\frac{1}{k+\rho}&\hbox{if $1\leq i\leq k$.}
\end{cases}
$$
By convention, the father of the root $o$ is the root itself so that we do not have to treat this case separately. This means that when the walker is at the root, it has positive probability of staying at the root at the next time step - this amounts to adding a loop around $o$ that the walker can take with the corresponding probability. 
\end{itemize}

When $\rho = 1$, $S_n$ performs a simple random walk on the current tree, and this corresponds to the original model of the TBRW with $s=1$ and $(\xi_n)$ i.i.d., where $s$ is the number of consecutive steps taken by the walk between two additions of new leaves. In this setting, it is known that for any distribution $\nu$, the walk is transient and ballistic~\cite{TBRWZuaznabar, ribeiroTBRW}.

In this paper, we demonstrate how varying the bias $\rho$ towards the root affects the geometry of the tree $\T_n$ and the behavior of the walk $S_n$. Our main result is the existence of a non-trivial phase transition for this model depending on $(\rho, \nu)$. 

\begin{theorem}\label{maintheo1}
Let $\bar{\nu}$ denote the (possibly infinite) expectation of $\nu$:
$$
\bar{\nu} = \sum_{k=0}^\infty k \nu(k)  \in (0,\infty].
$$
For any $\rho > 0$, the following holds.
\begin{itemize}
\item If $\rho < 1+ 2 \bar{\nu}$, then walk $S_{n}$ is transient, \emph{i.e.} each vertex created is visited only finitely many times by the walk a.s. In particular, the walk returns only finitely many times to the root a.s.
\item If $\rho \geq 1 + 2\bar{\nu}$, then the walk $S_{n}$ is recurrent, \emph{i.e.} every vertex is visited infinitely often a.s. by the walk. Furthermore, 
\begin{itemize}
    \item If $p = 1 + 2\bar{\nu}$, then the walk is ``null recurrent'' in the sense that all return times to the root have infinite expectation. 
    \item If $p > 1 + 2\bar{\nu}$, then the walk is ``positive recurrent'' in the sense that  all return times to the root have finite expectation.
\end{itemize}
\end{itemize}
\end{theorem}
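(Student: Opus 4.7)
My plan is to analyze the walk through its local times at each vertex at the moments of return to the root. Let $T_n$ denote the time of the $n$-th return of the walk to the root $o$, and on $\{T_n<\infty\}$ set $L_n(v) := \#\{0\le k\le T_n:S_k=v\}$ for each vertex $v$ of $\T_{T_n}$; in particular $L_n(o)=n$. The walk is recurrent iff $T_n<\infty$ almost surely for every $n$, and whether the return times have finite expectation is governed by the growth rate of $T_n = \sum_v L_n(v)$ as $n\to\infty$.

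The cornerstone of the analysis is an edge-crossing balance identity: since at time $T_n$ the walker sits at the root, for any vertex $v$ with parent $u$, the number of $u\to v$ crossings equals the number of $v\to u$ crossings up to time $T_n$. At the $i$-th visit to $v$ the walker steps toward $u$ with probability $\rho/(K_v^{(i)}+\rho)$, where $K_v^{(i)}=\xi^v_1+\cdots+\xi^v_i$ is the number of children of $v$ after its $i$-th visit. Summing and applying the law of large numbers to the i.i.d.\ sequence $(\xi^v_i)\sim\nu^{\otimes\infty}$ yields a (roughly logarithmic) asymptotic for the number of $v\to u$ crossings as a function of $L_n(v)$, while an analogous computation at $u$ expresses the number of $u\to v$ crossings in terms of $L_n(u)$ and the rank of $v$ among the children of $u$. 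Matching the two sides produces a functional recursion linking $L_n(v)$ to $L_n(u)$, which upon summing over all children of $u$ and iterating down the tree gives the asymptotic scaling of the total local time at each generation. The threshold $\rho_c=1+2\bar{\nu}$ should emerge as the critical value at which the effective per-depth multiplier crosses $1$: transience for $\rho<\rho_c$ (local times cannot sustain returns to $o$ and the walker drifts to infinity, which is then upgraded to an almost-sure statement via a zero-one law), recurrence for $\rho\ge\rho_c$, and an explicit analysis of the geometric-type series $T_n=\sum_d(\text{total local time at depth }d)$ distinguishing null from positive recurrence at the boundary.

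The main technical obstacle lies in making this heuristic rigorous. The LLN approximation is only accurate at vertices with large local time, whereas late-created children have $L_n(v)=O(1)$ and, being numerous, collectively contribute a non-negligible correction; one must either split the sum into a ``bulk'' contribution (to which concentration can be applied) and a ``boundary'' contribution (handled by direct estimates), or work with finer move-by-move martingale arguments to bypass the need for concentration of $K_v^{(i)}$ around $\bar{\nu} i$. In addition, the local times across vertices are jointly correlated through the random tree structure, so estimates must be propagated uniformly in depth, likely via a multi-type branching process formulation in which the types encode the ``phase'' of each vertex along the recursion. Finally, the case $\bar{\nu}=+\infty$ falls outside the standard LLN framework and will require a separate argument, for instance a monotone coupling with truncated offspring distributions to reduce to the finite-mean case.
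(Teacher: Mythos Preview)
Your high-level framework is exactly the one the paper uses: the edge-crossing balance on the tree turns the local-time field $(L_n(v))_v$ at the $n$-th root return into a multi-type branching Markov chain indexed by the tree, with the type of $v$ being the number of down-crossings $v\to\pere{v}$, and recurrence of the walk becomes extinction of this branching process. Your identification of this structure, of the $0$--$1$ law, and of the coupling for $\bar\nu=\infty$ all match the paper.

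Where your plan diverges is in the execution, and this is where the real gap lies. You propose to pass through a law-of-large-numbers approximation $K_v^{(i)}\approx\bar\nu i$ to obtain a ``roughly logarithmic'' functional recursion between $L_n(v)$ and $L_n(\pere{v})$, and then to sum over children. As you yourself note, this approximation fails precisely where it matters most (late-created children with small local time, which are numerous), and it is not at all clear how the exact constant $1+2\bar\nu$ would emerge from such an asymptotic argument: the ``per-depth multiplier'' you allude to is not computed, and there is no mechanism in your sketch that produces it. The paper sidesteps all of these difficulties by \emph{not} approximating. It observes that the offspring law of the branching chain is governed by a single-vertex urn process, and then computes the mean matrix $M_{i,j}$ exactly via a negative-binomial identity (the number of draws of a fixed colour before the $k$-th draw of colour $0$ is a sum of geometrics with parameter $\rho/(1+\rho)$) combined with an explicit formula $\E[N_k-N_{k-1}]=\bar\nu(\rho/(\rho-\bar\nu))^k$ obtained by a continuous-time embedding. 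This yields a closed-form left eigenvector $f(k)=\rho^{-k}$ with eigenvalue $\bar\nu/(\rho-\bar\nu-1)$, and the threshold $\rho=1+2\bar\nu$ is simply where this eigenvalue equals $1$. The summability of $f$ and the sub/super/criticality of the eigenvalue then give the positive/null recurrence and transience statements directly via a standard Lyapunov-function argument for countable-type branching processes. In short: your bulk/boundary splitting and martingale corrections are unnecessary, because the relevant spectral quantity can be computed in closed form without any asymptotics.
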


\begin{remark}
\begin{itemize}
    \item When $\rho =1$, Theorem \ref{maintheo1} recovers the fact that the walk is always transient, irrespectively of the distribution $\nu$ (see \cite[Theorem 1.3]{TBRWZuaznabar}, which also holds for $s$ odd and $(\xi_n)$ uniformly elliptic - interestingly, \cite[Theorem 1.2]{TBRWZuaznabar} proves that for $s$ even and under mild assumptions on $(\xi_n)$, the walker will be recurrent). 
    \item Conversely, for any distribution $\nu$ with finite expectation, the walk becomes recurrent whenever $\rho$ is chosen large enough. This may seem surprising at first glance because, if we assume the walk to be recurrent, then the degree of every given vertex of the tree must ultimately increase to infinity, inducing local bias against the root which, intuitively, should in turn make the walk transient. A rough explanation for this apparent contradiction comes from the fact that, in the recurrent setting, even though all vertices have their degrees increasing to infinity, because the walk creates new vertices all the time, most of the vertices present at any given time have a current degree which is smaller than $\rho$, hence they induce a local bias toward the root which counterbalances the opposite bias created by the (few) vertices of large degree. 
    \item When the walk is recurrent, every vertex created is visited infinitely often a.s. By a straightforward application of the Borel-Cantelli Lemma, this implies that every vertex has as a degree which increase to infinity so the sequence of tree $\T_n$ converge locally to an infinite tree where every vertex has a (countably) infinite number of children, and yet, all of its vertices are visited infinitely often. 
    \item We point out that the walk $S_n$ is not a Markov process (but the pair $(\T_n, S_n)$ is Markov) so the meaning of ``positive recurrence" when $\rho > 1 + 2\bar{\nu}$ differs from that for usual Markov chains. For instance, the excursions of the tree-builder walk $S_n$ between two returns to the root are not i.i.d. as in the Markov case and the expectation of the $k$-th return time to the root grows faster than linearly (in fact exponentially in $k$, \emph{c.f.} Lemma \ref{lemma:EN} and Lemma \ref{lemma:exptau}) and
    $$
    \frac{1}{n}\sum_{k=0}^n \ind{S_n = o} \underset{n\to\infty}{\longrightarrow} 0 \quad{a.s.}
    $$
    In particular, the ergodic theorem for Markov chain does not apply here and the distribution of $S_n$ does not converge to a non-degenerate distribution.
\end{itemize}
\end{remark}

By definition, the number of vertices added to the tree at each step is given by a sequence of i.i.d. r.v. distributed as $\nu$. Therefore, by the strong law of large numbers (assuming $\nu$ has finite expectation), the number of vertices $\sharp\T_n$ of $\T_n$ satisfies:
$$
\sharp\T_n \underset{n\to\infty}{\sim} \bar{\nu} n \quad \hbox{a.s.}
$$
The asymptotic geometry of the tree $\T_n$ differs depending on the transience/recurrence regime considered. In the transient case,  we prove that the walk is ballistic, hence the tree is asymptotically a line onto which a grafted finite random trees. We use the notation $|v|$ for the height of a vertex (\emph{i.e.} its distance from the root) and define the maximum height of the tree $\T_n$ by
$$
|\T_n| \defeq \max_{v\in\T_n}|v|.
$$
\begin{theorem}\label{theo:LLN_TCL} Suppose that $\rho < 1 + 2\bar{\nu}$. There exist positive constants $v = v(\rho, \nu)$ and $\sigma  = \sigma(\rho, \nu)$ such that
\begin{equation}\label{eq:LGNandTCL}
\frac{|S_n|}{n}  \overset{\hbox{\tiny{a.s.}}}{\underset{n\to\infty}{\longrightarrow}} v  \qquad\hbox{and}\qquad \frac{|S_n| - v}{\sqrt{n}}  \overset{\hbox{\tiny{law}}}{\underset{n\to\infty}{\longrightarrow}} \mathcal{N}(0,\sigma^2).
\end{equation}
The same results also hold with $|\T_n|$ in place of $|S_n|$ in \eqref{eq:LGNandTCL}.
\end{theorem}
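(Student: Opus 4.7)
The plan is to construct a regeneration structure for the walk in the transient regime and then to invoke the classical LLN and CLT for i.i.d. sums. Since $\rho<1+2\bar\nu$ forces $|S_n|\to\infty$ a.s.\ by Theorem \ref{maintheo1}, I would call $\tau_1$ the first $n$ such that $|S_m|<|S_n|$ for every $m<n$ and $|S_j|\ge|S_n|$ for every $j>n$, and define $\tau_k$ inductively as the $k$-th such time. A first lemma to prove is $\P(\tau_1<\infty)>0$: this follows by constructing a positive-probability event on which the walker moves ``straight down'' sufficiently far that the conditional probability of ever returning below the current height is small, with a bound uniform in the local tree configuration. Combined with transience, a standard $0$--$1$ argument then yields $\tau_k<\infty$ a.s.\ for every $k$. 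At each such time, the tree decomposes at the edge above $S_{\tau_k}$: the part below is frozen forever (no further leaf additions, no further visits), while the subtree rooted at $S_{\tau_k}$ evolves as an independent copy of the (appropriately conditioned) TBRW. The bivariate increments $X_k\defeq(\tau_{k+1}-\tau_k,\,|S_{\tau_{k+1}}|-|S_{\tau_k}|)$ are therefore i.i.d.\ for $k\ge 1$.

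The main obstacle is to check that $X_1$ has a finite second moment. Writing $\tau_2-\tau_1$ as a sum of excursion durations in the subtree above $S_{\tau_1}$, and observing that the height increment is bounded by the time increment, it suffices to control the moments of the total time spent in the subtree above a freshly created vertex, conditioned on never returning to its father. This is exactly the type of quantity handled by the recursive analysis of local times developed for the proof of Theorem \ref{maintheo1}: the strict inequality $\rho<1+2\bar\nu$ yields a genuine contraction of the effective reproduction of return visits, which I would leverage to obtain geometric tails for the conditioned excursion length, hence finite polynomial (and in fact exponential) moments for $\tau_2-\tau_1$.

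With these ingredients in hand, the strong LLN for the i.i.d.\ sequence $(X_k)$ gives $\tau_n/n\to\E[\tau_2-\tau_1]$ and $|S_{\tau_n}|/n\to\E[|S_{\tau_2}|-|S_{\tau_1}|]$ a.s., whence $|S_n|/n\to v$ where $v$ is the ratio of those two expectations (a short interpolation argument between consecutive regeneration times, using the tail bound on $\tau_2-\tau_1$, takes care of the error $|S_n|-|S_{\tau_{k(n)}}|$). Applying the CLT to the centered i.i.d.\ variables $Y_k\defeq(|S_{\tau_{k+1}}|-|S_{\tau_k}|)-v(\tau_{k+1}-\tau_k)$ and inverting the renewal process $\tau_k$ then yields the Gaussian fluctuations for $|S_n|$ with an explicit positive variance $\sigma^2$. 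Finally, $|\T_n|-|S_n|$ is dominated by the height of the subtree created since the last regeneration, which by the tail estimates above is $o(\sqrt n)$ a.s.; the LLN and CLT therefore transfer verbatim from $|S_n|$ to $|\T_n|$.
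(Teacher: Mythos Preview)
Your overall architecture (regeneration times, i.i.d.\ increments, LLN/CLT for renewal processes) is exactly the one the paper uses, and the transfer from $|S_n|$ to $|\T_n|$ is handled in the same spirit. The substantive divergence is in the key technical step: bounding the moments of the inter-regeneration time.

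You propose to extract these moments from the local-time/branching analysis of Theorem~\ref{maintheo1}, claiming that ``the strict inequality $\rho<1+2\bar\nu$ yields a genuine contraction of the effective reproduction of return visits''. This is where there is a real gap. In the transient regime the branching Markov chain $Z$ of Section~\ref{sec:BMC_Z} is \emph{supercritical}, not contractive: the eigenvalue $\bar\nu/(\rho-\bar\nu-1)$ from Corollary~\ref{cor:vecpropre} exceeds $1$ precisely when $\rho<1+2\bar\nu$. What you would actually need is the process $Z$ \emph{conditioned on extinction}, and turning that into tail bounds for the conditioned excursion length in an infinite-type setting is not automatic; you have not indicated how to do it. Moreover, the entire urn/branching machinery requires $\bar\nu<\infty$ (cf.\ Lemma~\ref{lemma:urnenondegeneree}), whereas Theorem~\ref{theo:LLN_TCL} must also cover $\bar\nu=\infty$.

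The paper avoids this altogether. It bounds the tail of the first cut time $C_1$ via two elementary range estimates that make no use of the spectral computation: Lemma~\ref{lem:badrnbis} shows that the walk's range $R_n$ grows at least like $n^{1/2}$ with stretched-exponential failure probability (a pigeonhole argument on visits and newly created neighbours), and Lemma~\ref{lem:tau1condi} bounds $\P(R_{\tau_1}>n,\ \tau_1<\infty)$ by separately controlling the depth and width of the excursion, using only that each fresh vertex has probability $\alpha=\P(\tau_1=\infty)>0$ of trapping the walk. Decomposing the path into ``failed'' excursions up to the first cut time then gives $\P(C_1>n)\le e^{-n^{1/6+o(1)}}$ (Proposition~\ref{prop:integreC1}), hence all moments. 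This argument is more elementary than what you sketch and, crucially, works uniformly in $\nu$, including $\bar\nu=\infty$.
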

This generalizes~\cite[Theorem 1.1]{TBRWtransience} and~\cite[Theorems 3 and 4(a)]{ribeiroTBRW}.

On the other hand, in the positive recurrent case, we observe a condensation of the tree whose  height now grows logarithmically in $n$. 

\begin{figure}
\begin{center}
\subfloat[$\rho = 2.95$]{\includegraphics[height=3.4cm]{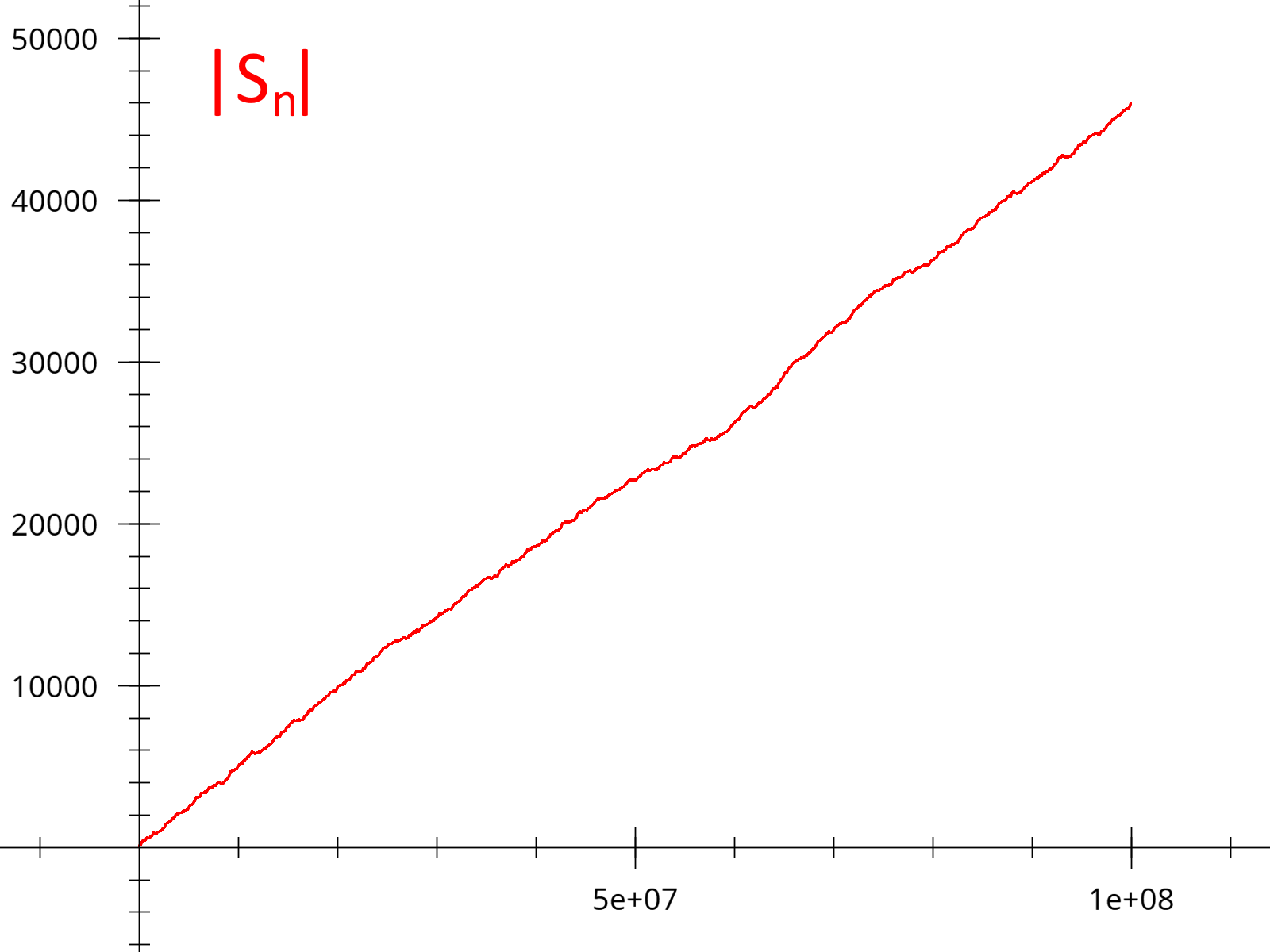}}
\hspace{0.4cm}
\subfloat[\label{fig:sim_b}$\rho = 3$]{\includegraphics[height=3.4cm]{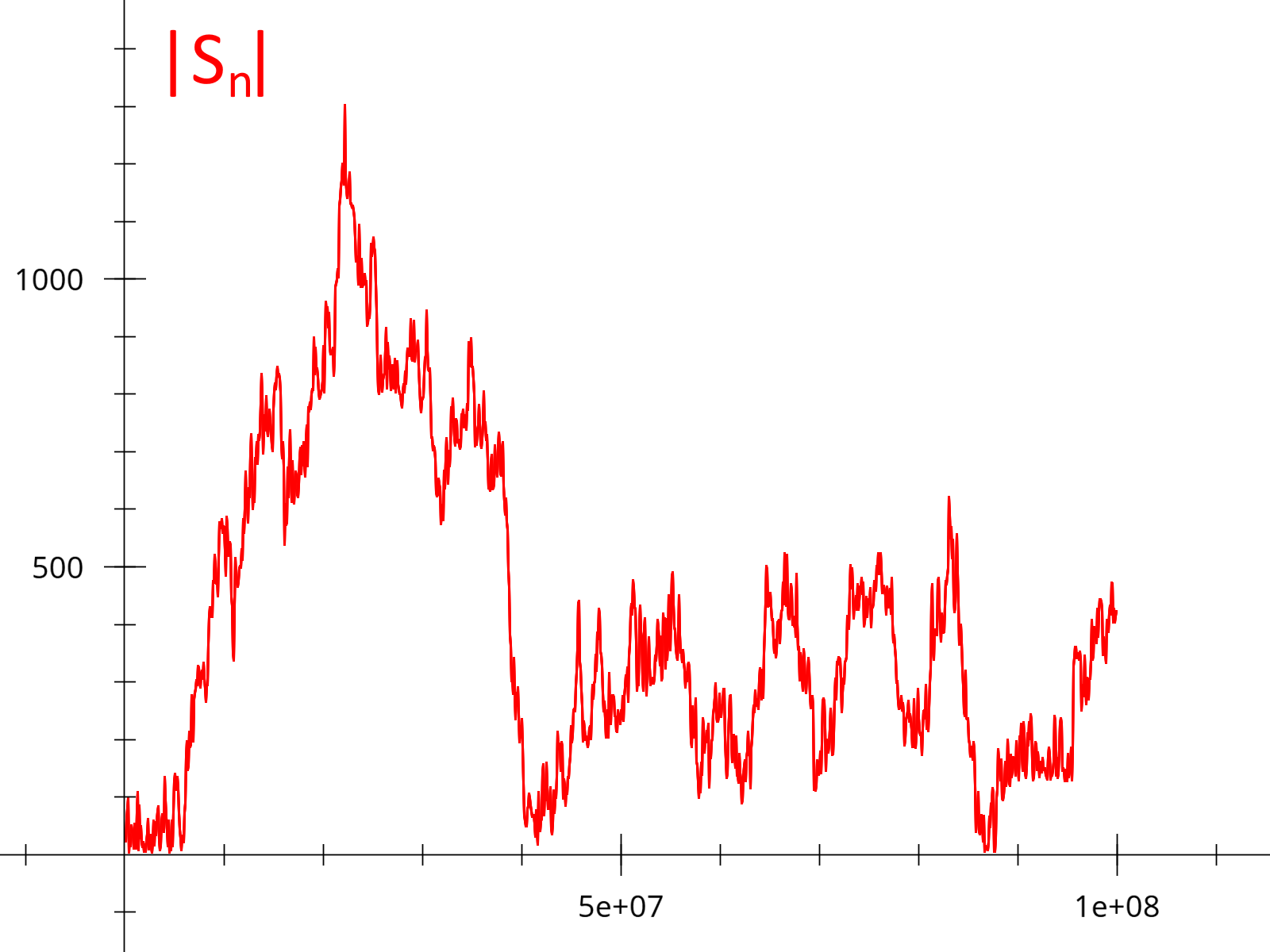}}
\hspace{0.4cm}
\subfloat[$\rho = 3.05$]{\includegraphics[height=3.4cm]{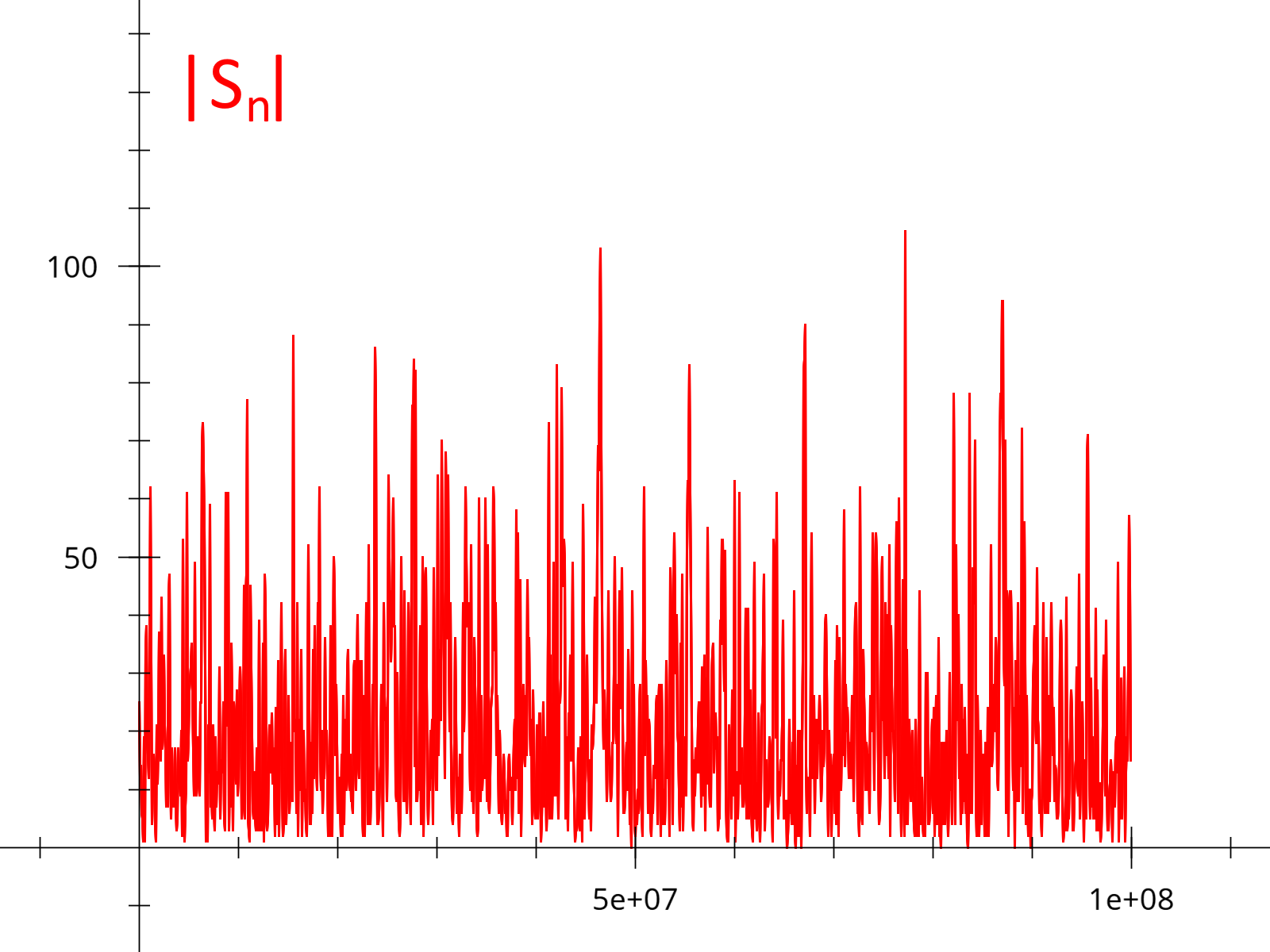}}
\end{center}
\caption{\label{fig:sim}Simulation of the first $10^8$ steps of the height $|S_n|$ of the tree-builder random walk when $\nu(k) = \mathbf{1}_{k=1}$ (\emph{i.e.} a new leaf is created at each step) in the transient (a) , null recurrent (b) and positive recurrent (c) regime. Here, the phase transition occurs at the critical parameter $\rho_c = 1 + 2\bar{\nu} = 3$.}
\end{figure}

\begin{theorem}\label{theo:rec_log}
Suppose that $\rho >  1 + 2\bar{\nu}$. There exists positive constants $c_1 =  c_1(\rho, \nu)$ and $c_2 =  c_2(\rho, \nu)$ such that
$$
c_1 \log n \leq |\T_n| \leq c_2 \log n\qquad \hbox{for all $n$ large enough a.s.}
$$
\end{theorem}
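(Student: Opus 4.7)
The plan is to control $|\T_n|$ via estimates on the local time of the walker at each depth. Set $L_h(n) \defeq \sum_{m=0}^{n} \ind{|S_m| = h}$, so that $\sum_{h \geq 0} L_h(n) = n+1$ and $|\T_n|$ equals (up to an additive constant) the largest $h$ with $L_h(n) > 0$. Both bounds on $|\T_n|$ will then follow from two-sided estimates on $\E[L_h(n)]$ of the form $\E[L_h(n)] \asymp n\beta^h$ for some $\beta = \beta(\rho,\nu) \in (0,1)$.

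For the upper bound, my aim is to prove $\E[L_h(n)] \leq C n \beta^h$ for all $h \geq 0$ and $n \geq 1$. Given this, Markov's inequality yields $\P(|\T_n| \geq h) \leq C n \beta^{h-1}$; choosing $c_2$ large enough so that $c_2 \log(1/\beta) > 1$ and setting $h = \lceil c_2 \log n \rceil$ gives a summable probability along the dyadic subsequence $n = 2^k$, which combined with the monotonicity of $n \mapsto |\T_n|$ yields $|\T_n| \leq c_2 \log n$ a.s. for large $n$. The decay $\E[L_h(n)] \leq C n \beta^h$ itself should come from the recursive analysis of local times outlined in the introduction: each visit to a vertex $v$ at depth $h-1$ contributes on average at most a factor $\beta$ to the total visits to children of $v$ at depth $h$. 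Here $\beta$ reflects the fact that the degree of a vertex with local time $\ell$ is approximately $\bar{\nu}\ell$, so that the probability of moving from such a vertex to its parent dominates on average the probability of moving to any particular child; the strict inequality $\rho > 1 + 2\bar{\nu}$ is precisely what is needed to make $\beta < 1$.

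For the lower bound, one combines (i) a lower bound $N_n \geq c'_1 \log n$ on the number of returns to the root by time $n$, and (ii) a uniform positive lower bound on the probability that a given excursion from the root extends the current maximum depth. Point (i) follows from the exponential upper bound on $\E[\tau_k]$ provided by Lemma \ref{lemma:exptau}, via Markov and Borel--Cantelli along a subsequence of $k$. Point (ii) reduces to a uniform estimate on the probability that the walker, during an excursion, reaches the deepest existing vertex, at which moment a new leaf will be created with probability $1 - \nu(0)$. The main technical challenge throughout is to turn the heuristic parent-to-child recursion for $\E[L_h(n)]$ into a rigorous bound: the degree of a vertex is itself a random function of its local time, so the recursion is genuinely non-linear. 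Overcoming this likely requires a comparison with a multi-type branching process (as suggested by the keywords of the paper), or a carefully chosen martingale exploiting the condition $\rho > 1 + 2\bar{\nu}$.
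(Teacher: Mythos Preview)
Your upper-bound sketch is in the right spirit but leaves the hardest step as a heuristic; your lower-bound strategy contains a concrete misattribution and is substantially more complicated than what is needed.

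\textbf{Upper bound.} The paper does use the multi-type branching process you anticipate, but the decisive maneuver is to work at the stopping times $\tau_k$ (the $k$-th crossing of the root loop) rather than at deterministic times $n$. At time $\tau_k$ the edge-local-time process is \emph{exactly} the branching Markov chain $Z$ started from a root particle of type $k$ (Proposition~\ref{prop:loctimerec}), and the explicit left eigenvector $f(i)=\rho^{-i}$ with eigenvalue $\lambda=\bar\nu/(\rho-\bar\nu-1)<1$ (Corollary~\ref{cor:vecpropre}) immediately gives $\P(|\T_{\tau_k}|>\ell)\le \rho^k\lambda^\ell/(\rho-1)$. The conversion to deterministic time is then done via the almost-sure lower bound $\tau_k\ge e^{\delta k}$ of Lemma~\ref{lemma:exptau}. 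Your proposed fixed-time bound $\E[L_h(n)]\le Cn\beta^h$ would require making the ``parent-to-child factor $\beta$'' rigorous \emph{without} the exact branching structure that is only available at $\tau_k$; this is precisely the difficulty you flag at the end, and the paper resolves it by not attempting it.

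\textbf{Lower bound.} First, a misattribution: Lemma~\ref{lemma:exptau} gives the a.s.\ \emph{lower} bound $\tau_k\ge e^{\delta k}$ --- this is what feeds the \emph{upper} bound on $|\T_n|$ above --- not an upper bound on $\E[\tau_k]$. An upper bound $\E[\tau_k]\le C\rho^k$ does follow from the eigenvector computation and Proposition~\ref{prop:loctimerec}(c), but that is a separate statement. Second, your step~(ii) --- that each root-excursion extends the current maximum depth with uniformly positive probability --- is unjustified: the current depth $|\T_{\tau_k}|$ grows with $k$, so the excursion must reach an ever-larger height, and nothing in your outline controls this. The paper sidesteps both issues with a one-line coupling: since at every step
\[
\P\big(|S_{n+1}|=|S_n|+1\;\big|\; \mathcal F_n\big)\ \ge\ \frac{1-\nu(0)}{1+\rho}\ =:\ q
\]
(create at least one child, then move to some child), the process $(|S_n|)$ stochastically dominates a $q$-biased nearest-neighbour walk on $\N$ reflected at $0$, whose running maximum is classically $\ge c_1\log n$ for all large $n$.
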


In this paper, we leave out the study of the asymptotical behavior of the walk and the tree in the critical case. 
\begin{question} 
What is the typical height $|S_n|$ and diameter $|\T_n|$ in the critical case $\rho =  1 + 2\bar{\nu}$ ? Numerical simulation suggest that these quantities should be of order $\sqrt{n}$ as $n$ tend to infinity, \emph{c.f.}  Figure~\ref{fig:sim_b}. 
\end{question}

The rest of the paper is organized as follows. In section \ref{sec:sec2}, we first set some notations and then prove a monotonicity property (Proposition \ref{prop:coupling_nurho}) for the recurrence/transience of the TBRW with respect to its parameter $(\rho,\nu)$ via a coupling argument. This allows us, later on, to prove the recurrence/transience criterion under the assumption that $\nu$ has finite expectation and subsequently extend it, by comparison, to the case where $\bar\nu = \infty$.

Next, we introduce two new processes: an urn process (section \ref{subsec:urnprocess}) and a branching Markov chain (section \ref{sec:BMC_Z}). These processes  are closely related to the local times of the tree-builder walk (Proposition \ref{prop:constructurn} and Proposition \ref{prop:loctimerec}) and reduce the study of the recurrence/transience of the TBRW to the survival of a particular ``infinite-type'' branching process. Section \ref{sec:sec3} is devoted to proving Theorem \ref{maintheo1} which is done by computing a (strikingly explicit) quasi-stationary distribution for the multi-type branching process, \emph{c.f.} Corollary \ref{cor:vecpropre}. Then, in section \ref{sec:sec4}, we recall the classical method of cut/regeneration times and apply it to prove Theorem \ref{theo:LLN_TCL} and then carry on proving Theorem \ref{theo:rec_log}.

\section{An urn process and a branching Markov chain related to the TBRW.\label{sec:sec2}}

\subsection{Notations}
In the rest of the paper, $(S_n, \T_n)$ always denotes the TBRW and associated tree at step $n$. Since the sequence $(\T_n,\;n\geq 0)$ of random trees is non-decreasing (we do not remove vertices), it makes sense to consider the inductive limit 
$$
\T_\infty \defeq \bigcup_n \T_n.
$$
We point out that $\T_\infty$ is a (random) rooted tree but it needs not be locally finite: a vertex may have an infinite number of children. By the Borel-Cantelli Lemma, this happens i.f.f. $S_n$ visits this vertex infinitely many times since, at each visit and independently of everything happening before, it has probability $1 -\nu(0) > 0$ of creating at least one new child.
 
The trees $\T_n$ (and  also $\T_\infty$) have a natural planar embedding, i.e. an ordering of the children of any vertex, given by their order of creation. Let us set some notation. For a vertex $v \in \T_n$, we define $\sharp_n v $ to be the number of children of $v$ in $\T_n$, and we denote $\fils{v}{1},\fils{v}{2},\ldots$ the children of $v$ (ordered by time of creation) and by $\pere{v}$ the father of $v$. By convention, we set $\pere{o} = o$. Note that by compatibility of the trees $\T_n$, we do not need to add a subscript $n$ to the children (or the father) of $v$ as they are the same in all trees where they exist. Given a vertex $v$, we write $|v|$ for its height defined as the distance between this vertex and the root $o$. Once again, this definition does not depend on the tree considered (provided $v$ exists in it). 

Given a rooted tree $\T$ and $d\in \N$, we define $\T^{\leq d}$ the subtree composed of all vertices of height at most~$d$. Finally, we denote by $\Tfull$ the ``full'' planar tree where every vertex $v$ has an infinite number of children $\fils{v}{1},\fils{v}{2},\ldots$. Any planar tree $\T$ can therefore be seen as a subtree of~$\Tfull$.

For two probability distributions $\nu,\tilde{\nu}$ (on $\N$), we write $\nu \prec\tilde{\nu}$, meaning that $\tilde{\nu}$ dominates $\nu$ for the stochastic partial order, if $\tilde\nu([k,\infty))\geq \nu([k,\infty))$ for all $k\geq 0$.

\subsection{Coupling of walks for different values of the parameters\label{sec:coupling}}

One should expect the tree-builder walks to become ``more recurrent'' when $\rho$ increases or when the distribution $\nu$ of the number of children created at each step decreases (stochastically). In order to formalize this idea, we construct couplings between processes with different parameters. 

\begin{proposition}\label{prop:coupling_nurho}
Fix $\rho \geq \tilde{\rho} >0$ and two distributions $\nu \prec\tilde{\nu}$. If the $(\tilde{\rho}, \tilde{\nu})$-TBRW goes through the root loop infinitely often a.s., then the $(\rho, \nu)$-TBRW does the same a.s.  
\end{proposition}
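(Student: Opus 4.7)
The plan is to construct, on a single probability space, a coupling of the $(\tilde\rho,\tilde\nu)$-TBRW (call it walker 1) and the $(\rho,\nu)$-TBRW (call it walker 2) under which every time-step at which walker 1 performs the root loop is also a time-step at which walker 2 performs the root loop. The conclusion of the proposition is then immediate.

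I would assemble the coupling from two ingredients. First, for the offspring, since $\nu \prec \tilde\nu$, one can couple independent pairs $(\xi^{(1)}_n, \xi^{(2)}_n)_{n \ge 0}$ with the correct marginals so that $\xi^{(2)}_n \le \xi^{(1)}_n$ almost surely. Second, for the moves, using shared i.i.d. uniforms $(U_n)$ on $[0,1]$, each walker decides its step via the one-variable parametrisation ``go to parent iff $U_n \le r/(k+r)$''. Realising both trees as planar subtrees of a common full tree $\Tfull$ with a fixed children ordering, I would aim to maintain along the coupling the invariants:
\begin{itemize}
    \item[(I)] $\T^{(2)}_n \subseteq \T^{(1)}_n$, with the children of each $v \in \T^{(2)}_n$ in $\T^{(2)}_n$ forming a prefix of its children in $\T^{(1)}_n$;
    \item[(II)] $S^{(2)}_n$ lies on the geodesic from $o$ to $S^{(1)}_n$ in $\T^{(1)}_n$.
\end{itemize}
Granted (I) and (II), the statement follows at once: by (II), $S^{(1)}_n = o$ implies $S^{(2)}_n = o$; by (I), the root of $\T^{(2)}_n$ has at most as many children as the root of $\T^{(1)}_n$, so combined with $\rho \ge \tilde\rho$ one gets $\rho/(k^{(2)}_n(o)+\rho) \ge \tilde\rho/(k^{(1)}_n(o)+\tilde\rho)$, and the shared $U_n$ then forces walker 2 to take the root loop whenever walker 1 does.

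The main obstacle, and the technical heart of the argument, is the propagation of (I) and (II) through the transitions. The easy configuration is when both walkers coincide at a vertex: (I) directly gives the child-count comparison, so walker 2's probability of moving toward the root dominates walker 1's, the shared $U_n$ synchronises these parent moves, and a natural matching of ``go to child'' decisions (via the remaining randomness in $U_n$, exploiting the prefix embedding of children) preserves both invariants. The delicate configuration is when $S^{(2)}_n$ is a strict ancestor of $S^{(1)}_n$: walker 2 then produces offspring at a vertex walker 1 is not currently visiting, and one must check that these new children can be identified with children of $v^{(2)}_n$ already present in $\T^{(1)}_n$ from earlier visits of walker 1. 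This requires strengthening (I) into a vertex-by-vertex bookkeeping on accumulated offspring of walker 1 at $v^{(2)}_n$, together with a careful choice of how walker 2's parent/child alternative is resolved when it sits strictly above walker 1 (so that walker 2 can never ``outrun'' walker 1 in producing offspring at a given vertex). Once that bookkeeping is established, a case analysis over all transition types shows that (I) and (II) survive, and the coupling-based conclusion then follows.
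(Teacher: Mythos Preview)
Your synchronous coupling cannot maintain the invariants (I) and (II), and this is a genuine gap rather than a technicality to be filled in. Consider the step where $S^{(2)}_n = u$ is a \emph{strict} ancestor of $S^{(1)}_n = w$, and let $v$ be the child of $u$ on the path to $w$. For (II) to survive, walker~2 must move either to $\pere{u}$ or to $v$; but walker~2 has children $v, v', v'',\ldots$ at $u$ and must move to each of $v', v'',\ldots$ with probability $1/(k^{(2)}+\rho)>0$ to have the correct marginal. No choice of how to ``resolve walker~2's parent/child alternative'' using the shared $U_n$ can avoid this, because walker~1 is sitting at a \emph{different} vertex $w$ whose degree $k^{(1)}$ bears no relation to $k^{(2)}$: the inequality $\rho/(k^{(2)}+\rho)\ge \tilde\rho/(k^{(1)}+\tilde\rho)$ you implicitly rely on requires $\rho k^{(1)}\ge \tilde\rho k^{(2)}$, which fails as soon as walker~2 is near the (high-degree) root and walker~1 is at a fresh leaf. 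The same obstruction kills (I): once the walkers separate, walker~2 (with the larger bias) makes many more visits to low-height vertices than walker~1 does in the same number of steps, so it accumulates strictly more offspring there and $\T^{(2)}_n\not\subseteq\T^{(1)}_n$. Your proposed ``bookkeeping so that walker~2 can never outrun walker~1 in producing offspring'' would require walker~2 to visit each vertex no more often than walker~1, which is impossible under a common clock while keeping the $(\rho,\nu)$-marginal.

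The paper's proof resolves this by abandoning synchrony: the $(\rho,\nu)$-walk $S$ is \emph{frozen} whenever $\tilde S$ is away, and $S$ takes its $n$-th step only at the first time $t_n>t_{n-1}$ when $\tilde S$ has returned to $S_n$, the tree inclusion $\T_{n+1}\subseteq\tilde\T_{t_n+1}$ holds, and $\tilde S$ is about to move either to $\pere{S_n}$ or to a child of $S_n$ lying in $\T_{n+1}$. At that moment the extra bias $b=\rho/(\rho+x)-\tilde\rho/(\tilde\rho+x)\ge 0$ is injected. The freezing guarantees, by induction, that the local time of $S$ at every vertex never exceeds that of $\tilde S$, which is exactly what makes the offspring comparison $\xi_{1,v}+\cdots+\xi_{k,v}\le\tilde\xi_{1,v}+\cdots+\tilde\xi_{\tilde k,v}$ go through and keeps $S_n$ on the ancestral line of $\tilde S_t$. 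This asynchronous device is the missing idea in your proposal.
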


\begin{proof}
We construct two TBRWs $(S_n,\T_n)$ and $(\tilde{S}_n,\tilde{\T}_n)$ with respective parameters $(\rho,\nu)$ and $(\tilde\rho,\tilde{\nu})$ on the same probability space, by a natural extension of the coupling described in~\cite[Lemma 1]{ribeiroTBRW} for $\rho=\tilde{\rho}$ and $\nu =\text{Ber}(1-\tilde{\nu}(0))$. 

Roughly speaking, we decompose the trajectory of $\tilde{S}$ where it evolves together with $S$, and excursions in subtrees above a given position of $S$. This can happen either when $S$ moves towards the root while $\tilde{S}$ moves away, coupling their jumps while using that $\rho>\tilde{\rho}$, or when $\tilde
{S}$ moves to a child of $\tilde\T\setminus \T$, using that $\nu\prec \tilde\nu$. For any such excursion, either it never ends, in which case $\tilde{S}$ is transient, or it comes back to the position where we froze $S$. Then the two walks resume their joint evolution, until a next time when $\tilde{S}$ moves away from the root and $S$ towards it, and so on. We note that while $S$ is frozen, the excursions of $\tilde{S}$ will create new vertices on the tree $\tilde{\T}_n$ (compared to $\T_n$) but these additional vertices only make the walk $\tilde{S}$ more transient compared to $S$ so that they are not problematic.

We consider two i.i.d. sequences $(\xi_{k,v}, v\in\Tfull, k\in\N^*)$ and $(\tilde{\xi}_{k,v}, v\in\Tfull, k\in\N^*)$ such that $\xi_{k,v}$ (resp. $\tilde{\xi}_{k,v}$) has distribution $\nu$ (resp. $\tilde{\nu}$) and such that 
\begin{equation}\label{eq:xixitilde}
    \xi_{k,v} \leq \tilde{\xi}_{k,v}\text{ a.s.}, 
\end{equation}
which is possible since $\nu\prec \tilde{\nu}$. We will impose that at the $k$-th visit of $v$ by $S$ (resp.~$\tilde{S}$), we add $\xi_{k,v}$ (resp. $ \tilde{\xi}_{k,v}$) neighbours to $v$ in the corresponding TBRW trees. 

Construct $\tilde{S}$ according to the $(\tilde\rho,\tilde\nu)$-TBRW dynamics. We now construct $S$. 
Initially, we start with $S_0=\tilde{S}_0=o$ and $\T_1$ is $o$ with $\xi_{1,o}$ leaves attached to it. For $n\geq 0$, let $t_{n}$ be the first time $t> t_{n-1}$ such that $\tilde S_t=S_n=:v$, $\T_{n+1}\subseteq \tilde\T_{t+1}$, and $\tilde{S}_{t+1}$ is either $\pere{v}$, or a child of $v$ in $\T_{n+1}$, with the possibility that $t_n=\infty$. 

If $t_n<\infty$, there are two cases. If $\tilde{S}_{t+1}=\pere{v}$, we let $S_{n+1}=\pere{v}$. And if $\tilde{S}_{t+1}=:v'$ is a child of $v$ in $\T_{n+1}$, we let $x:=\xi_{1,v}+\ldots +\xi_{k,v}$, $S_n$ being the $k$-th visit of $v$ by $S$. With probability $b:=\frac{\rho}{\rho +x} -  \frac{\tilde\rho }{\tilde \rho +{x}}$ we let $S_{n+1}=\pere{v}$, and with probability $1-b$ we set $S_{n+1}=v'$. Note that $b\geq 0$ since $\rho \geq \tilde{\rho}$. 

By induction on $n$, the first $t>t_{n-1}$ such that  $\tilde S_t=S_n=:v$ and $\tilde{S}_{t+1}$ is either $\pere{v}$, or a child of $v$ in $\T_n$ also satisfies that $\T_{n+1}\subseteq \tilde\T_{t+1}$. Indeed, at any vertex of $\Tfull$ the local time $k$ of $(S_j)_{j\leq n}$ is no greater than that $\tilde{k}$ of $(\tilde{S}_j)_{j\leq t}$ since $S$ always waits for $\tilde{S}$ to catch it before moving again. By~\eqref{eq:xixitilde}, we have thus $\xi_{1,v}+\ldots +\xi_{k,v}\leq \tilde{\xi}_{1,v}+\ldots +\tilde\xi_{k,v}\leq \tilde{\xi}_{1,v}+\ldots +\tilde\xi_{\tilde k,v}$. Hence, $t$ is in fact $t_n$.

Moreover, in our construction, the first case happens with probability $\frac{\tilde\rho }{\tilde \rho +{x}}= \frac{\rho}{\rho +x} -b $ so that $S_{n+1}$ moves to $\pere
{v}$ with total probability $\frac{\rho}{\rho +x}$, and to any given child of $v$ in $\T_{n+1}$ with probability $x^{-1}(1-\rho/(\rho+x))=1/(\rho+x)$. Hence, $S$ performs a $(\rho,\nu)$-TBRW. Furthermore, one has easily by induction on $n$ that $S_n$ is always an ancestor of $\tilde{S}_t$ for $t_{n-1}+1\leq t\leq t_n$, and that if $\tilde{S}_t=S_n$ for some $t>t_{n-1}$, then $\tilde{S}$ only moves towards $o$ if so does $S$. Thus  
\begin{equation}\label{eq:visitsroot}
(S_j)_{j \leq n}\text{ visits $o$ at least as many times as }(\tilde{S}_j)_{j\leq t_n} .
\end{equation}
Taking $n=\min\{j, t_j=\infty\}-1$ if this minimum exists or $n\rightarrow \infty$ else yields the conclusion. 
\end{proof}

\subsection{The urn process associated with the local time at a vertex}\label{subsec:urnprocess}

We now introduce an auxiliary urn process, the \emph{TBRW urn}, which captures the local behavior of the TBRW around a fixed vertex and whose detailed analysis will play a critical role in the rest of the paper. Initially, we start with the urn containing $\rho$ balls\footnote{$\rho$ need not be an integer, so that the urn can contain a non-integer number of balls, but this does not matter from the mathematical point of view.} of color $0$. At each time step $n\geq 1$, we make the following operation.
\begin{enumerate}
\item[(a)] First, independently of everything else, we add a random number of new balls sampled according to the distribution $\nu$. Each added ball is given a new color, whose index is the smallest color that has not yet been attributed (so if the colors already present in the urn are $0,1,\ldots,k$ and we add $j$ new balls, then they are given colors $k+1, k+2,\ldots, k+j$). 
\item[(b)] Second, we draw a uniform random ball from the urn, we record its color $B_n$, and we put it back in the urn. 
\end{enumerate}
We construct in this way a random sequence $B_1,B_2\ldots$ of integers representing the colors of the balls drawn from the urn at each step.

This urn process and the TBRW are related in the following way: fix a vertex $v\in\Tfull$ and look at the times when the walk $S$ is at $v$ (if ever). The crucial observation is that when $S$ is at $v$, the displacement rule is local, meaning that it only depends on the current number of children of $v$ and on an independent additional randomness (to make one step given this local environment). But it does not depend on what happens to the walk when it is not at $v$. Moreover, conditionally on having $k$ children at $v$, the probability of jumping to $v^{i}$ for $i\in \{0,1,\ldots,k\}$ is equal to the probability of drawing the ball of color $i$ in the urn currently containing colors $0,1, \ldots, k$. Putting these arguments together, it is immediate to check that the TBRW can be constructed from a family of independent TBRW urns as follows:
\begin{proposition}[Construction of the TBRW from an i.i.d. family of urns]\label{prop:constructurn}
The TBRW $S$ can be constructed in the following way:
\begin{enumerate}
\item We initially put an urn $U_v$ for every $v\in \Tfull$, containing $\rho$ balls of color $0$. 
\item We start the walk at $S_0 = o$.
\item When $S$ is at a vertex $v$, we perform (a) and (b) above on the urn $U_v$ (independently of all past operations). $S$ 
then moves to the vertex $\fils{v}{B}$ where $B$ is the color of the ball drawn. 
\end{enumerate}
\end{proposition}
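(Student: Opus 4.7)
The plan is to verify that the process $(\hat S_n, \hat \T_n)_{n\geq 0}$ constructed via the i.i.d.\ family of urns $(U_v)_{v\in\Tfull}$ has the same law as the TBRW defined in the introduction. Since both processes are Markov on the state space of pairs (rooted planar tree, position), it suffices to check that they share the same initial condition and the same one-step transition kernel.

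First, I would represent each urn $U_v$ by an i.i.d.\ sequence of ``operation data'' $(\xi_{k,v},U_{k,v})_{k\geq 1}$, where $\xi_{k,v}\sim \nu$ is the number of balls added at the $k$-th operation on $U_v$, and $U_{k,v}$ is an auxiliary uniform random variable on $[0,1]$ used to pick the ball drawn at that operation. These families are jointly independent over $v\in\Tfull$ and $k\geq 1$, and the $k$-th operation on $U_v$ is carried out when the walker visits $v$ for the $k$-th time, using only the data $(\xi_{k,v},U_{k,v})$.

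Next, I would check the one-step transition by induction on $n$. Suppose that at time $n$ we have $\hat S_n=v$ and that $v$ has been visited $k-1$ times before $n$ in the construction. After step (a) of the urn procedure, the urn $U_v$ contains $\rho$ balls of color $0$ and exactly one ball of each color $1,\ldots,m$, where
\[
m \;=\; \xi_{1,v}+\xi_{2,v}+\cdots+\xi_{k,v}
\]
is precisely the number of children of $v$ in $\hat\T_{n+1}$. Two things then need to be matched with the TBRW definition: (i) the number of new leaves added at $v$ at the $n$-th step, which is $\xi_{k,v}\sim \nu$ independent of the past by construction; and (ii) the transition probability, which by the uniform ball draw equals $\rho/(m+\rho)$ for color $0$ (i.e.\ the move to $\pere{v}$) and $1/(m+\rho)$ for each color $i\in\{1,\ldots,m\}$ (i.e.\ the move to $\fils{v}{i}$). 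This is exactly the TBRW rule given $\hat\T_{n+1}$ and $\hat S_n=v$.

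The only point that requires a little care is the independence statement: when the walker revisits $v$, the operation used is $(\xi_{k,v},U_{k,v})$ which is independent of all previously used operation data (those at earlier visits to $v$, and those at other vertices). Hence, conditionally on $(\hat\T_n,\hat S_n)$, the variables driving the $n$-th step are independent of the history $(\hat\T_j,\hat S_j)_{j\leq n}$, which is precisely the Markov property required for the TBRW. I do not expect a substantial obstacle here: the proposition is essentially a bookkeeping identity reflecting that the update at $\hat S_n$ only depends on the local environment at $\hat S_n$ and on fresh randomness, so the TBRW's randomness can be decoupled vertex by vertex into independent urns.
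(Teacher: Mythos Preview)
Your proposal is correct and follows the same approach as the paper: the paper does not give a formal proof for this proposition but simply notes, in the paragraph preceding it, that the displacement rule at $v$ depends only on the current number of children of $v$ and fresh randomness, and that this matches the urn draw; it then declares the construction ``immediate to check.'' Your write-up is a more explicit version of exactly this argument, spelling out the Markov-kernel comparison and the bookkeeping of which urn operation is used at the $k$-th visit to $v$.
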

We insist on the fact that the urns are independent. In order to study the recurrence or transience of the walk, it is therefore crucial to understand the asymptotic behavior of the TBRW urn and, in particular, to check whether a given color is picked infinitely often  a.s.

\begin{lemma}\label{lemma:draw0}
Suppose that
\begin{equation}\label{eq:draw_k_times}
\P(\hbox{color $0$ is drawn a finite number of times from the TBRW urn}) > 0.
\end{equation}
Then,
$$
\P(\hbox{color $0$ is never drawn from the TBRW urn}) > 0.
$$
\end{lemma}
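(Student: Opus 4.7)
The plan is to exploit the conditional independence of the draws, given the sequence of additions $(\xi_n)_{n\geq 1}$, so as to reduce the question to the convergence of a deterministic series. Let $Y_n \defeq \xi_1+\cdots+\xi_n$ and let $A_n$ denote the event that color $0$ is drawn at step $n$. By construction, conditionally on $(\xi_k)_{k\geq 1}$ the events $(A_n)_{n\geq 1}$ are independent with $\P(A_n\mid (\xi_k))=\rho/(\rho+Y_n)$. Moreover, since $\nu(0)<1$, we have $Y_n\to\infty$ a.s.

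First, by the conditional Borel--Cantelli lemma, and since $\sum_n 1/(\rho+Y_n)$ and $\sum_n 1/Y_n$ converge together on the a.s.~event $\{Y_n\to\infty\}$,
$$
\P\big(\hbox{color $0$ drawn finitely often}\mid(\xi_k)\big)=\ind{\sum_{n\geq 1} 1/Y_n<\infty} \quad \hbox{a.s.,}
$$
so the hypothesis \eqref{eq:draw_k_times} is equivalent to $\P(\sum_n 1/Y_n<\infty)>0$. Conditional independence also yields
$$
\P\big(\hbox{color $0$ never drawn}\big)=\E\bigg[\prod_{n\geq 1}\frac{Y_n}{\rho+Y_n}\bigg],
$$
and the integrand is strictly positive if and only if $Y_n>0$ for every $n\geq 1$ (equivalently, $\xi_1\geq 1$) and $\sum_n 1/Y_n<\infty$.

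It remains to check that both conditions hold simultaneously with positive probability. The key observation is that $\{\sum 1/Y_n<\infty\}$ is insensitive to the value of the single term $\xi_1$: since $Y_n\to\infty$ a.s.~and $\xi_1$ is finite, this event coincides a.s.~with $\{\sum_{n\geq 2}1/(\xi_2+\cdots+\xi_n)<\infty\}$, which belongs to $\sigma(\xi_2,\xi_3,\ldots)$ and is thus independent of $\xi_1$. Hence
$$
\P\big(\xi_1\geq 1,\;\textstyle\sum_n 1/Y_n<\infty\big)=(1-\nu(0))\cdot\P\big(\textstyle\sum_n 1/Y_n<\infty\big)>0,
$$
and integrating a strictly positive quantity over a set of positive measure gives $\P(\hbox{color $0$ never drawn})>0$, as desired. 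The only technical point in the argument is the insensitivity remark above; everything else is conditional Borel--Cantelli applied to the $A_n$'s, together with the trivial positivity step.
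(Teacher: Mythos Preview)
Your argument is correct in substance and takes a genuinely different route from the paper's. The paper proceeds by a direct ``surgery'': conditioning on the addition sequence $(\xi_n)$ and reducing to the case $\xi_1>0$, it fixes an event of positive probability on which color~$0$ is drawn at exactly some finite list of times $i_1,\ldots,i_k$, and then flips those $k$ draws to non-zero colors one by one (which is possible since a non-zero ball is always present); the modified event still has positive probability and on it color~$0$ is never drawn. Your approach is more analytic: you characterise both the hypothesis and the conclusion in terms of the convergence of $\sum_n \rho/(\rho+Y_n)$, via conditional Borel--Cantelli on one side and the infinite-product formula on the other, and then use a tail-independence argument to decouple that convergence from the value of $\xi_1$. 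A nice by-product of your route is that $\{\sum_n 1/(\rho+Y_n)<\infty\}$ is a tail event, so by Kolmogorov's $0$--$1$ law the hypothesis in fact forces color~$0$ to be drawn only finitely often \emph{almost surely}.

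One point does need cleaning up. The passage from $\sum_n 1/(\rho+Y_n)$ to $\sum_n 1/Y_n$ is not valid on all of $\{Y_n\to\infty\}$: if $\xi_1=0$ (which has probability $\nu(0)$, possibly positive) then $Y_1=0$ and $\sum_n 1/Y_n=\infty$ automatically, whereas $\sum_n 1/(\rho+Y_n)$ may well converge. For the same reason, the claimed a.s.\ coincidence of $\{\sum_n 1/Y_n<\infty\}$ with $\{\sum_{n\geq 2}1/(\xi_2+\cdots+\xi_n)<\infty\}$ fails on $\{\xi_1=0\}$ and, symmetrically, on $\{\xi_2=0\}$. The repair is painless: work with $\sum_n 1/(\rho+Y_n)$ throughout. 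This series is always well-defined, its convergence is unaffected by changing finitely many $\xi_i$'s (hence it is independent of $\xi_1$), and on $\{\xi_1\geq 1\}$ its finiteness is exactly what makes $\prod_n Y_n/(\rho+Y_n)$ strictly positive. With that adjustment your proof goes through as written.
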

\begin{proof}
We can construct the urn process by first prescribing the number of balls added at each step according to an i.i.d. sequence $\xi=(\xi_i)_{i\geq 1}$ with distribution $\nu$ and then, conditionally on $\xi$, by performing the draws  $B_1,B_2,\ldots$ independently. We can also assume without loss of generality that $\xi_1 > 0$ since we can discard all the draws (which are necessarily of color $0$) until a non-zero color ball is added to the urn. Then the process after that time is the same as the initial urn process conditioned on $\xi_1 >0$. 

 Now, if \eqref{eq:draw_k_times} holds, there  must exist $k\geq 1$ and indices $i_1,i_2, \ldots i_k$ such that, with positive probability, color $0$ is drawn from the urn at times $i_1,\ldots,i_k$ and at no other time. For each draw $B_{i_j}$, we have  $\P(B_{i_j} = 0 \,|\, \xi) \P(B_{i_j} \neq 0 \,|\, \xi) > 0$  (since the urn always contains a non-zero color) This means that we can change the color $B_{i_j}$, for $j = 1,\ldots, k$, to non-zero values while still keeping an event of positive probability. On that new event, color $0$ is never drawn. 
\end{proof}

\begin{corollary}[Dichotomy for the recurrence/transience of the TBRW]\label{cor:rec01}
For any vertex $v\in \Tfull$, we have:
\begin{equation}\label{eq:vtovo}
\P(\hbox{$S$ visits $v$ i.o. but jumps from $v$ to $\pere{v}$ finitely many times} ) = 0.
\end{equation}
As a consequence, we have 
$$
\P\Big( \substack{\hbox{$S$ crosses the root loop} \\ \hbox{infinitely many times}}\Big) + \P\Big( |S_n| \underset{n\to\infty}{\longrightarrow} \infty\Big) = 1
$$
\end{corollary}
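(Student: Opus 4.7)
The plan is to prove Part~(1) using the urn construction of Proposition \ref{prop:constructurn} together with Lemma \ref{lemma:draw0}, and then to deduce Part~(2) from Part~(1) via a height-based induction.

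For Part~(1), I fix $v\in\Tfull$ and realise the walk using the i.i.d.\ family of urns $(U_w)_{w\in\Tfull}$, extending each $U_v$ to an intrinsic infinite sequence of draws $(B^v_n)_{n\geq 1}$ that continues independently of whether the walker actually returns. Write $A_v=\{v\text{ visited i.o.}\}$. On $A_v$ the walker consumes every $B^v_n$, and a draw of colour $0$ corresponds exactly to a jump from $v$ to $\pere{v}$ (or a root-loop crossing when $v=o$). Hence the event in \eqref{eq:vtovo} coincides with $A_v\cap D_v^{\ast}$, where $D_v^{\ast}:=\{\text{colour }0\text{ drawn only finitely often in }(B^v_n)_n\}$. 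Since $A_v\cap D_v^{\ast}\subseteq D_v^{\ast}$, it suffices to show $\P(D_v^{\ast})=0$, which by the contrapositive of Lemma \ref{lemma:draw0} reduces to $\P(\text{colour }0\text{ never drawn from }U_v)=0$. Conditionally on the i.i.d.\ additions $\xi^v$, the indicators $\ind{B^v_n=0}$ are independent Bernoullis with parameter $\rho/(\rho+X^v_n)$, with $X^v_n:=\xi^v_1+\dots+\xi^v_n$. When $\bar\nu<\infty$ the strong law yields $X^v_n\sim\bar\nu n$, so $\sum_n\rho/(\rho+X^v_n)=\infty$ a.s., and $\prod_n\bigl(1-\rho/(\rho+X^v_n)\bigr)=0$ a.s., giving $\P(\text{never drawn})=0$ as required.

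The main obstacle will be the heavy-tail case $\bar\nu=\infty$, where the series $\sum_n\rho/(\rho+X^v_n)$ can be summable with positive probability and Lemma \ref{lemma:draw0} alone is insufficient to kill $A_v\cap D_v^{\ast}$. The natural fix is to truncate $\nu$ to a distribution $\tilde\nu$ of finite mean, prove the result for $(\rho,\tilde\nu)$ by the argument above, and transfer the conclusion via the coupling of Proposition \ref{prop:coupling_nurho}. A small adaptation is needed since that proposition concerns only root-loop crossings: I would run the analogous coupling on the walks restricted to the subtree rooted at $v$, so that visiting $v$ infinitely often becomes monotone in the same sense as visiting $o$.

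Part~(2) follows from Part~(1) applied to $v=o$: almost surely either $o$ is visited only finitely often, or the root-loop is crossed infinitely often---and in the latter case the dichotomy is already proven. Otherwise, I would prove by induction on the height $k\geq 0$ that every vertex of height $\leq k$ in $\T_\infty$ is visited only finitely often: the base case $k=0$ is the assumption, and any vertex $v$ of height $k$ visited infinitely often would by Part~(1) jump to $\pere{v}$ (of height $k-1$) infinitely often, contradicting the inductive hypothesis. If $|S_n|\not\to\infty$, then $\liminf|S_n|\leq K$ for some $K$, placing $S_n\in\T_\infty^{\leq K}$ infinitely often; yet each vertex of height $\leq K-1$ being visited finitely often forces only finitely many children to be ever attached there, making $\T_\infty^{\leq K}$ finite, so by pigeonhole some vertex of $\T_\infty^{\leq K}$ is visited infinitely often---a contradiction. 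Hence $|S_n|\to\infty$ almost surely.
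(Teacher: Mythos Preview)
Your Part~(2) argument is essentially the paper's. For Part~(1) with $\bar\nu<\infty$, your direct route---show $\P(D_v^\ast)=0$ via conditional Borel--Cantelli, which amounts to Lemma~\ref{lemma:urnenondegeneree}---is correct and arguably cleaner than what the paper does in that regime.

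The gap is the case $\bar\nu=\infty$, which the corollary must cover (it is invoked precisely in that regime at the end of Section~\ref{sec:sec3}). Your truncation-plus-coupling fix does not go through. Proposition~\ref{prop:coupling_nurho} only transfers the event ``root loop crossed i.o.''\ from the walk with \emph{more} leaves to the one with \emph{fewer}, whereas the event in~\eqref{eq:vtovo} sits at an arbitrary vertex $v$ and combines two conditions ($v$ visited i.o.; $v\to\pere{v}$ crossed finitely often) that are not monotone in the same direction under the coupling. Restricting to the subtree rooted at $v$ does not help, since neither walk is confined there. More fundamentally, you are trying to deduce $\P(D_v^\ast)=0$ for the heavy-tailed urn by comparison with the truncated one, but the heavy-tailed urn has \emph{more} balls, so colour~$0$ is drawn \emph{less} often---the comparison goes the wrong way, and indeed $\P(D_v^\ast)>0$ is possible when $\bar\nu=\infty$.

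The paper sidesteps this by never attempting to prove $\P(D_v^\ast)=0$. It argues by contradiction, uniformly in $\nu$: if $\P(A_v\cap D_v^\ast)>0$ then in particular $\P(D_v^\ast)>0$, so by Lemma~\ref{lemma:draw0} every urn has a positive probability of \emph{never} drawing colour~$0$. Since the urns are i.i.d.\ and the walk a.s.\ visits infinitely many fresh vertices, it a.s.\ enters some $w$ whose urn never draws $0$ and is then permanently trapped in the subtree rooted at $w$; iterating this forces $|S_n|\to\infty$ a.s., contradicting $\P(A_v)>0$. The key point is that the trapping argument only needs $\P(\text{never draw }0)>0$, not the stronger (and sometimes false) statement $\P(D_v^\ast)=0$ that your approach targets.
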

\begin{proof}
On the event that $v$ is visited infinitely often but $\pere{v}$ only finitely many times, the urn $U_v$ at vertex $v$ (defined in Proposition \ref{prop:constructurn}) draws color $0$ only finitely many times. If this event has positive probability and since the urns at all the vertices of $\Tfull$ are i.i.d., it follows from Lemma \ref{lemma:draw0} that each one has a positive probability to never draw $0$. But for any $w\in \Tfull$, if $0$ is never drawn from $U_w$, this means that the walk gets trapped inside the subtree rooted at $w$ when it enters it. Since the walk a.s. visits infinitely many vertices (the probability of staying on the same finite number of vertices while accumulating a diverging number of neighbors being zero), this must happen an infinite number of time a.s. Hence, $|S_n| \to \infty$ a.s. However, this is incompatible with the assumption that $v$ is visited infinitely often, hence that event must have null probability. 

To see why the second statement holds true, we recall that $\Tfull$ is countable and observe that on the event $\mathcal{E} \defeq \{ \hbox{there exists a vertex $v\in\Tfull$ that is visited i.o.}\}$, then, by a straightforward induction and thanks to \eqref{eq:vtovo}, the root loop must also be crossed infinitely many times. 
Conversely, on the complementary event $\mathcal{E}^c =\{\hbox{every $v\in\Tfull$ is visited finitely many times}\}$, the number of children of each vertex is necessarily finite, hence the subtree of vertices at distance at most $\ell\geq 1$ from the root is also finite, which implies that $|S_n| \geq \ell$ for $n$ large enough since each vertex of this subtree is visited only finitely many times. This is true for all $\ell$ so we conclude that $|S_n| \to\infty$ on $\mathcal{E}^c$.
\end{proof}

In the rest of the paper, we will be mostly concerned with the case where $\nu$ has finite expectation. In that setting the urn is ``well-behaved'' as the next result shows. 
\begin{lemma}\label{lemma:urnenondegeneree}
Suppose that $\bar\nu < \infty$. Then, the number of colors present in the basic urn increases to infinity and every color is drawn infinitely many times a.s., \emph{i.e.}
$$
\sum_{n\geq 1} \ind{B_n = j} = \infty \quad \hbox{a.s. for all $j\geq 0$.}
$$
\end{lemma}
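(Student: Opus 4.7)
The plan is to derive both assertions from the strong law of large numbers together with a conditional version of the second Borel-Cantelli lemma. Let $\mathcal{F}_\xi := \sigma(\xi_k,\,k\geq 1)$ and, for $n \geq 0$, set
$$
N_n := \rho + \xi_1 + \cdots + \xi_n,
$$
which is the total weight of the urn at the time of the $n$-th draw. Because each draw is made independently with replacement, the random variables $B_1, B_2,\ldots$ are independent conditionally on $\mathcal{F}_\xi$, and this conditional independence is the key structural fact behind the argument.

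The first assertion is almost immediate: the number of distinct colors present in the urn at step $n$ equals $1+\xi_1+\cdots+\xi_n$, and since $\nu(0)<1$ together with $\bar\nu<\infty$ gives $\bar\nu \in (0,\infty)$, the SLLN yields $N_n/n \to \bar\nu$ a.s. and in particular the number of colors tends to $\infty$ a.s.

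For the second assertion, I would fix $j\in\N$ and let $T_j := \inf\{n\geq 0 : \xi_1+\cdots+\xi_n \geq j\}$ be the (a.s. finite) creation time of color $j$, with the convention $T_0 = 0$. For every $n \geq T_j$, color $j$ is present in the urn at step $n$, so
$$
\P(B_n = j \mid \mathcal{F}_\xi) = \frac{c_j}{N_n}, \qquad \text{with } c_0 = \rho \text{ and } c_j = 1 \text{ if } j \geq 1.
$$
Using $N_n \sim \bar\nu\, n$ a.s., the sum $\sum_{n\geq T_j}\P(B_n = j \mid \mathcal{F}_\xi)$ diverges a.s., and the conditional second Borel-Cantelli lemma (applicable by the conditional independence noted above) gives $\P(B_n = j \text{ i.o.}\mid \mathcal{F}_\xi) = 1$ a.s. Integrating and taking a countable intersection over $j\in\N$ concludes. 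There is no substantial obstacle; the only point requiring care is to justify the conditional independence of the draws given $\mathcal{F}_\xi$, which is immediate from the construction.
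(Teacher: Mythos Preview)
Your proof is correct and follows essentially the same route as the paper's: both use the SLLN to get $N_n\sim\bar\nu n$ and then the second Borel--Cantelli lemma via the divergence of $\sum c_j/N_n$. Your version is in fact slightly more careful, since you make explicit the conditioning on $\mathcal{F}_\xi$ needed to obtain the independence that the second Borel--Cantelli lemma requires, whereas the paper leaves this implicit.
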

\begin{proof}
By the law of large numbers, the number of balls in the urn at step $n$ is a.s. asymptotically equivalent to $\bar{\nu} n$ as $n$ tend to infinity. Furthermore, the probability $\P(B_n = 0)$ of picking up a ball of color $0$ at step $n$ is of order $\rho/(\bar{\nu}n)$ so that by the Borel-Cantelli Lemma, a ball of color $0$ is drawn from the urn infinitely many times a.s. Similarly, $\P(B_n = j) \sim 1/(\bar{\nu}n)$ for $j\geq 1$ and $n$ larger than the time at which the color $j$ was introduced in the urn, this said time being finite a.s. Therefore, any color is picked up infinitely many times a.s.
\end{proof}

We now let $\Theta_0 \defeq 0$ and, for $k\geq 1$, we define the stopping time at which color $0$ is drawn from the urn for the $k$-th time:
$$
\Theta_k \defeq \inf\left\{n\geq  1, \sum_{i=1}^n \ind{B_i = 0} = k\right\}.
$$
We also define $N_0 \defeq 0$ and for $k\geq 1$ and $j\geq 1$, 
\begin{align}
\label{def:Nk}N_k & \defeq \substack{\hbox{number of non-zero colors present in the urn at step $\Theta_k$}\\\hbox{(\emph{i.e.} when color $0$  has been drawn $k$ times).}}\\
\label{def:Yki}Y_k^j& \defeq \sum_{i =1}^{\Theta_k} \ind{B_i = j} = 
 \substack{\hbox{number of times color $j$  has been drawn}\\ \hbox{at the $k$-th draw of color $0$.}}
\end{align}
Thanks to the previous lemma, all the quantities above are well-defined and finite a.s. as soon as $\bar\nu < \infty$. Thus, from now on and until section \ref{sec:sec4}, we will always assume that
\begin{equation}\label{eq:nubar_fini}
\bar\nu = \sum_{k} k\nu(k) < \infty.
\end{equation}

\subsection{The branching Markov chain \texorpdfstring{$Z$}{Z}\label{sec:BMC_Z}}

Proposition \ref{prop:constructurn} show that we can construct the TBRW from urn processes but it does not really help to understand the trajectory of the walk $S$ \emph{i.e.} the order in which the urns are chosen. To this end, we now construct a branching Markov chain whose transition kernel $\K$ is defined in terms of the urn process. We set
$$
\begin{cases}
\K(k, [y_1,\ldots, y_j]) = \P(N_k = j \hbox{ and } Y_k^1 = y_1,\ldots, Y_k^j = y_j)&\hbox{for $j\geq 1$,}\\
\K(k, []) = \P(N_k = 0).&
\end{cases}
$$
In words, $\K$ is the kernel which, given a non-negative integer $k$, returns a finite (possibly empty) list representing, for each non-zero color present in the urn, the number of times it has been drawn at the moment when color $0$ is drawn for the $k$-th time. In particular, we have $\K(0, []) = 1$ because no non-zero color is initially present in the urn. Note also that a color $i$ can be present in the urn without having being drawn so that $Y_k^i = 0$. This is not the same thing as the color not yet being present in the urn. 

We use this kernel to construct a tree-indexed particle system, where each particle has a \emph{type} $k \in \N$ and reproduces itself by creating offspring according to $\K$.  More  precisely, we define a branching Markov chain process $Z = (Z(v), v\in \T)$  with the following rules:
\begin{enumerate}
\item[(a)] $\T$ is the rooted planar tree representing the genealogy of the process. A vertex $v\in \T$ is called a \emph{particle} and $Z(v) \in \N$ is its \emph{type}. The \emph{generation} of a particle $v$ corresponds to its distance $|v|$ from the root particle $o$.
\item[(b)] We start initially from a single root particle $o$ with type $Z(o) \in  \N$ (generation $0$). 
\item[(c)] Suppose that we have constructed the process up to generation $n$. Then, all the particles at generation $n$ evolve independently. Each particle $v$ (with $|v|=n$) gives birth to a random finite number of children $\fils{v}{1}, \ldots \fils{v}{j}$ with respective types $Z(\fils{v}{1}),\ldots Z(\fils{v}{j})$. The distribution of the number of children and their types is given by the transition kernel $\K$:
$$
\begin{cases}
\P\big(\hbox{$v$  has $j$ children with } Z(\fils{v}{1}) = z_1,\ldots, Z(\fils{v}{j}) = z_j\;|\; Z(v) = z\big) = \K\big(z, [z_1,\ldots, z_j]\big)&\\
\P\big(\hbox{$v$ does not have any child}\;|\; Z(v) = z\big) = \K\big(z, []\big)&
\end{cases}
$$
\item[(d)] If there is no particle at generation $n$, then we say that the process \emph{dies out} and we stop the construction. 
\end{enumerate}

The process $Z$ can be interpreted as a multi-type branching process with infinitely many types (indexed by $\N$). We observe that, when both $\nu(0) > 0$ and $\nu(1) > 0$, then $Z$ is irreducible in the sense that, for any $z\geq 1$ and any $z_1,\ldots, z_j \geq 0$, a particle of type $z$ has positive probability to have exactly $j$ children with respective types $z_1,\ldots, z_j$. In the general case, additional particles of type $0$ may be unavoidable, but this is not a problem because particles of type $0$ play no role in the genealogy of the process.
\begin{proposition}[irreducibility of $Z$ when omitting particles of type $0$] \label{prop:iredducibilityZ}
Particles of type $0$ in the branching Markov chain $Z$ never reproduce. For any $z\geq 1$ and any $z_1,\ldots, z_j \geq 1$, there exists $n \geq 0$ (which may depend on $z,z_1,\dots,z_j$) such that: 
\begin{equation}\label{eq:irred1}
\P\Big(
\substack{\hbox{$v$  has $j + n$ children with $Z(\fils{v}{1}) = z_1,\ldots, Z(\fils{v}{j}) = z_j$}\\
\hbox{and $Z(\fils{v}{j+1}) = 0 \ldots, Z(\fils{v}{j+n}) = 0$}}
\;\Big|\; Z(v) = z\Big) > 0.
\end{equation}
Furthermore, for all $z\geq 1$, 
\begin{equation}\label{eq:irred2}
\P\Big( \hbox{$v$ has at least one child of type $1$}
\;\Big|\; Z(v) = z\Big) \geq \frac{(1-\nu(0))\rho}{(1+\rho)^2} > 0.
\end{equation}
\end{proposition}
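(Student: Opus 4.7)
The plan is to treat the three claims of the proposition in turn: (a) that type-$0$ particles have no children, (b) the irreducibility-type statement~(\ref{eq:irred1}), and (c) the uniform lower bound~(\ref{eq:irred2}). Claim (a) is immediate from the conventions: since $\Theta_0 = 0$, no draws have been performed at time $0$, so $N_0 = 0$ and $\K(0, [\,]) = \P(N_0 = 0) = 1$.

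For claim (b), I will exhibit one explicit history of the urn process, of strictly positive probability, that realises the prescribed child-type vector. Fix $\ell^* \geq 1$ with $\nu(\ell^*) > 0$ (which exists since $\nu(0) < 1$), set $T = z + z_1 + \cdots + z_j$ and $n = T\ell^* - j$, and consider the scenario where exactly $\ell^*$ balls are added at each of the $T$ steps (each such event having probability $\nu(\ell^*) > 0$), with the draws scheduled as follows: for each $i = 1, \ldots, j$, color $i$ is drawn at the $z_i$ consecutive steps indexed $1+\sum_{k<i} z_k, \ldots, \sum_{k\leq i} z_k$; and color $0$ is drawn at the final $z$ steps. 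Since each $z_i \geq 1$, color $i$ is scheduled no earlier than step $i$, which is at or after the step $\lceil i/\ell^*\rceil$ where it is introduced into the urn, so every scheduled draw is legal. Each individual operation has positive probability, so their product yields the required strictly positive lower bound.

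For claim (c), I will consider the event $E = \{B_1 \neq 0\} \cap \{B_1 \notin \{B_2, \ldots, B_{\Theta_z}\}\}$, on which the non-zero color $c := B_1$ satisfies $Y_z^c = 1$ and hence gives a type-$1$ child. The first factor $\P(B_1 \neq 0) \geq (1-\nu(0))/(1+\rho)$ comes from the elementary inequality $k/(\rho+k) \geq 1/(1+\rho)$ for $k \geq 1$. The second, more subtle factor comes from a sub-process argument: conditionally on $\{B_1 = c\}$, the urn always contains exactly $\rho$ balls of color $0$ and exactly one ball of color $c$ (as long as $c$ has not been re-drawn), so the sub-sequence of draws whose colour is either $0$ or $c$ is i.i.d.\ with $\P(0) = \rho/(1+\rho)$ and $\P(c) = 1/(1+\rho)$, independently of all other draws. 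The $z$-th color-$0$ draw in the full process coincides with the $z$-th $0$ of this sub-sequence, so the event that no $c$ appears before it has probability exactly $(\rho/(1+\rho))^z$. Multiplying the two factors gives the claimed bound $(1-\nu(0))\rho/(1+\rho)^2$ in the case $z = 1$.

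The main obstacle will be extending this bound uniformly to $z \geq 2$: the event $E$ alone yields only $(1-\nu(0))\rho^z/(1+\rho)^{z+1}$, which decays geometrically in $z$. To recover the uniform bound I would apply the same sub-process construction to a fresh non-zero color drawn at step $\Theta_{z-1}+1$ (the first step of the last excursion): only one further color-$0$ draw is then required to reach $\Theta_z$, contributing the single factor $\rho/(1+\rho)$, while the factor $(1-\nu(0))/(1+\rho)$ is recovered, via the same elementary bound, as the conditional probability that the draw at step $\Theta_{z-1}+1$ is one of the newly added balls. Making this last step fully rigorous — in particular, showing that the resulting lower bound is uniform in the composition of the urn at $\Theta_{z-1}$ — is the technical heart of the argument.
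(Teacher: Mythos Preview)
Your treatments of (a) and (b) are correct and essentially coincide with the paper's. Your argument for~(\ref{eq:irred2}) in the case $z=1$ is also fine, and indeed more carefully written than the paper's.

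The gap is in your extension to $z\ge 2$. You propose to lower-bound the probability that the ball drawn at step $\Theta_{z-1}+1$ is one of the newly added balls by $(1-\nu(0))/(1+\rho)$, invoking the inequality $k/(\rho+k)\ge 1/(1+\rho)$. But at step $\Theta_{z-1}+1$ the urn contains $\rho+N_{z-1}+\xi$ balls, so this conditional probability equals $\xi/(\rho+N_{z-1}+\xi)$, which is \emph{not} of the form $k/(\rho+k)$ and is not bounded below uniformly in $N_{z-1}$. You flag exactly this as ``the technical heart of the argument'', but the specific event you chose is too restrictive to yield the uniform bound.

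The paper's fix is simpler than you anticipate: drop the requirement that the new color be \emph{drawn} at step $\Theta_{z-1}+1$, and only require that some new color $c$ be \emph{added} there (probability $1-\nu(0)$, independent of $N_{z-1}$). Then run your sub-process argument on the pair $\{0,c\}$ alone: since exactly one further $0$-draw is needed to reach $\Theta_z$, the variable $Y_z^{c}$ is the number of $c$'s before the first $0$ in that sub-sequence, hence geometric with parameter $\rho/(1+\rho)$, giving $\P(Y_z^{c}=1)=\rho/(1+\rho)^2$. Multiplying yields $(1-\nu(0))\rho/(1+\rho)^2$ for every $z\ge 1$, with no dependence on the urn composition at $\Theta_{z-1}$.
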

\begin{proof}
The proposition is a direct consequence of the definition of the kernel $\K$ in terms of the urn process. As stated above when defining $\K$, the urn starts without any ball of non-zero color implying $\K(0,[])=1$, which means that a particle of type $0$ has a.s. no progeny. 

Now, given $z \geq 1$, because we assume $\nu(0) < 1$, it is possible to construct a sequence of draws in the urn that creates at least $j$ new colors before drawing a ball of color $0$ for the $z$-th time and such that the first $j$ balls are picked respectively $z_1,\dots,z_j$ times while all balls of color $\geq j+1$ are not picked. This proves \eqref{eq:irred1}. 

Finally, fix $z\geq 1$ and consider the urn process just after a ball of color $0$ has been drawn for the $z-1$-th time. Then, with probability $1-\nu(0)$, at least one new color is added to the urn at that step and such a color is subsequently drawn a geometric number of times, with parameter $\rho/(1+\rho)$, before drawing a ball of color $0$ for the $z$-th time. Therefore, the probability that this ball is drawn exactly once (yielding a child of type $1$) is equal to $\frac{\rho}{(1+\rho)^2}$. This proves \eqref{eq:irred2}.
\end{proof}

Our interest for the branching  Markov chain $Z$ comes from its close relation to the local time of the TBRW stopped at a crossing time of the loop at the root.  Let us define $\tau_k$ as the first time that the tree-builder walk $S_n$ crosses the root loop for the $k$-th time. More precisely, we set $\tau_0 \defeq 0$ and for $k \geq 1$,
\begin{equation}\label{def:tauk}
\tau_k \defeq \inf \left\{n\in \N\,:\, \sum_{i = 1}^n \ind{S_{i-1} = S_{i} = o} = k \right\} 
\end{equation}
with the usual convention that $\inf\emptyset = +\infty$. We also define the local time process $L(\cdot,k)$ of the walk $S$ at time $\tau_k$ on the (oriented) edge of the tree $\T_{\tau_k}$ constructed so far: 
\begin{equation}\label{def:loctimeprocess}
    L(v,k) \defeq \sum_{n=1}^{\tau_k}\ind{S_{n-1} = v \hbox{ and } S_n = \pere{v}}\quad \hbox{for $v\in \T_{\tau_k}$}
\end{equation}
The following ``Ray-Knight type'' result highlights the connection between the local time process $L$ and the branching Markov chain $Z$ defined previously and will enable us to reduce questions about the recurrence/transience of the TBRW to questions pertaining the survival of $Z$.  

\begin{proposition}\label{prop:loctimerec}  Fix $k\geq 1$. The following statements hold true.
\begin{enumerate}
\item[(a)] We have $\P(\tau_k < \infty) = \P(Z \hbox{ dies out } \;|\; Z(o) = k)$.
\item[(b)] The process= $L(\cdot,k)$ under the conditional law $\P(\cdot \;|\; \tau_k < \infty)$ has the same distribution as the process $Z$ under the conditional law $\P(\cdot \;|\;  Z(o) = k \hbox{ and } Z \hbox{ dies out})$.
\item[(c)] The r.v. $\tau_k$ under the conditional law $\P(\cdot \; | \; \tau_k < \infty)$ has the same distribution as the r.v. $k + 2\sum_{v\in \T\setminus\{o\}} Z(v)$ under the conditional law $\P(\cdot \;|\;  Z(o) = k \hbox{ and } Z \hbox{ dies out})$.
\end{enumerate}
\end{proposition}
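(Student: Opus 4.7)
The plan is to construct both the TBRW and the branching Markov chain $Z$ (started from $k$) on the same probability space using an i.i.d.\ family of urns $(U_v)_{v \in \Tfull}$ via the representation of Proposition \ref{prop:constructurn}. From this family I would run the walk as described there, and also build a tree-indexed process $\widehat Z$ by a breadth-first exploration: set $\widehat Z(o) = k$, and for every vertex $v$ with $\widehat Z(v) \ge 1$ processed so far, use the next unused operations of $U_v$ until its $\widehat Z(v)$-th color-$0$ draw, reading off the children of $v$ and their types from the non-zero colors present in $U_v$ at that time, exactly as in \eqref{def:Nk}--\eqref{def:Yki}. By construction and by the definition of $\K$, $\widehat Z$ has the law of $Z$ with initial type $k$, and $\widehat Z$ dies out if and only if this exploration halts after finitely many operations.

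The crux of the proof is the almost sure identity $L(v,k) = \widehat Z(v)$ for all $v$ on the event $\{\tau_k < \infty\}$, which I would prove by induction on $|v|$. The base case $L(o,k) = k = \widehat Z(o)$ is the very definition of $\tau_k$. For the inductive step at $v \neq o$ with $u = \pere{v}$, let $V(u)$ denote the number of visits of the walker to $u$ by time $\tau_k$: since each visit consumes one operation of $U_u$, the walk uses the first $V(u)$ operations of $U_u$. Because $S_{\tau_k} = o$, the very last visit of the walker to $u$ must exit towards $\pere{u}$ (otherwise the walker would still be trapped in a descendant subtree of $u$ at time $\tau_k$), so the $V(u)$-th draw at $U_u$ is a color-$0$ draw. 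Hence $V(u)$ coincides with the urn-time of the $L(u,k)$-th color-$0$ draw, which by the induction hypothesis equals the urn-time of the $\widehat Z(u)$-th color-$0$ draw. The walker and the breadth-first exploration therefore consume \emph{exactly the same prefix} of $U_u$; the number of times color $j$ (the rank of $v$ among the children of $u$) is drawn in this prefix is the same for both, namely $L(v,k)$ on the walk side (each descent from $u$ to $v$ being matched by an ascent, by finiteness of $\tau_k$) and $\widehat Z(v)$ on the exploration side.

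The set equality $\{\tau_k < \infty\} = \{\widehat Z \text{ dies out}\}$ then follows. The inclusion $\subseteq$ is clear since $\T_{\tau_k}$ is finite. For $\supseteq$, setting $T^v$ to be the urn-time of the $\widehat Z(v)$-th color-$0$ draw at $U_v$ (with $T^v = 0$ if $\widehat Z(v) = 0$), one has $\sum_v T^v < \infty$ on $\{\widehat Z \text{ dies out}\}$, and running the walk on these urns shows that it visits each $v$ exactly $T^v$ times and reaches $\tau_k$ after $\sum_v T^v$ steps. This gives part (a). Part (b) is then immediate, since on the coincident events $\{\tau_k < \infty\} = \{\widehat Z \text{ dies out}\}$ the two trees are equal as random objects under the coupling. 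For part (c), a step count on $\{\tau_k < \infty\}$ yields $k$ loop-crossings at the root and, for each $v \neq o$, exactly $L(v,k)$ descents from $\pere{v}$ to $v$ matched by $L(v,k)$ ascents, hence $\tau_k = k + 2 \sum_{v \neq o} L(v,k)$; combined with (b), this gives (c).

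The main subtle point is the inductive identification, and in particular the \emph{last-draw-is-color-$0$} observation: without it, the walk's consumption of $U_u$ could overshoot the exploration's consumption by a few leftover non-color-$0$ draws, producing different ranks for the children of $u$ and breaking the identification. The reverse inclusion in the event equality is also somewhat delicate, as one must verify that the walk genuinely attains $\tau_k$ on $\{\widehat Z \text{ dies out}\}$; the cleanest way is to re-run the walk step by step against the (now deterministically specified) urn sequences and to check by induction on depth that it visits each $v$ exactly $T^v$ times before the $k$-th root loop crossing occurs.
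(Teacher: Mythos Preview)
Your coupling via a shared urn family is correct and yields the same identification as the paper, but the paper organizes the argument differently: it introduces an auxiliary walk $S^{(d)}$ reflected at height $d$, for which $\tau_k^{(d)} < \infty$ almost surely, identifies $L^{(d)}(\cdot,k)$ restricted to heights $<d$ with $Z$ up to generation $d-1$ (this is immediate precisely because the reflected walk always returns), and then recovers (a)--(c) by coupling $S$ with $S^{(d)}$ on $\{\max_{n \le \tau_k} |S_n| < d\}$ and letting $d \to \infty$. The reflection is exactly a device to bypass your ``delicate'' $\supseteq$ inclusion, at the cost of an extra limiting step; your route is more direct once that inclusion is secured, and your ``last draw at $U_u$ is colour $0$'' observation is the clean pathwise form of the paper's edge-balance remark.

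One comment on the $\supseteq$ inclusion: your proposed induction on depth is awkward to organize, since a top-down pass stalls (returns to $o$ depend on deeper excursions returning) and a bottom-up pass stalls too (the number of visits to a leaf depends on how often its parent sends the walk there, which is not yet controlled). A cleaner argument runs by induction on \emph{time}: maintain the invariant that for all $n \le \tau_k$ and all $v$, the walk has used at most $T^v$ operations at $U_v$. At $S_n = o$ this follows from $n < \tau_k$; at $S_n = v \neq o$ it follows from your edge-balance (entries from $\pere{v}$ are at most $\widehat Z(v)$ by the invariant at $\pere{v}$, and entries from children are at most exits to children, so $V(v,n) = T^v$ would force more than $\widehat Z(v)$ entries from $\pere{v}$). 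Summing then gives $n = \sum_v V(v,n) \le \sum_v T^v < \infty$ for every $n \le \tau_k$, hence $\tau_k < \infty$ on $\{\widehat Z \text{ dies out}\}$. With this in hand, parts (b) and (c) go through exactly as you wrote.
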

\begin{remark}
The arguments of the proof provided below follow those given in Section $3.1$ of \cite{BasdevantSingh} to describe the local time process of a multi-excited random walk on a regular tree (again stopped at a crossing of the root loop) \emph{c.f.} in particular Lemma 3.2 of \cite{BasdevantSingh}. Let us point out that the general idea that a non-Markovian walk on an acyclic graph with local interactions can have a local time process with a Markovian structure more amenable to analysis is not new. It traces back at least to the seminal paper of Kesten, Kozlov and Spitzer~\cite{KKS75} to study random walks in random environments. It was subsequently used for different kinds of self-interacting processes, e.g. self-repelling walks \cite{Toth96, TothWerner98} or reinforced walks \cite{BasdevantSingh2}. Although our setting here is slightly different due to the constant evolution of the state space, the general method still applies since the graph is always acyclic. 
\end{remark}
\begin{proof}[Proof of Proposition \ref{prop:loctimerec}]
We fix an integer $d \geq 1$ and we consider a modification $(S^{(d)}, \T^{(d)})$ of the TBRW reflected at height $d$, that is, when the walk is at a site $v$ at distance $|v| = d$ from the root, then it always moves back towards its father $\pere{v}$ at the next step without creating any new site neighboring $v$. Otherwise, if $|v| <d$, it behaves like the regular TBRW. This new walk constructs a tree of height $d$ in which, ultimately, all vertices at height $< d$ have infinite degree (and vertices at height $d$ are leaves). This assertion follows from the same argument use in the proof of Lemma \ref{lemma:urnenondegeneree} to show that the urn process draws each color infinitely many times. Indeed, when the walk is at a given vertex $v$ for the $k$-th time, by the strong law of large numbers, $v$ has $\sim k \bar\nu$ children so that the walk moves to the father $\pere{v}$ with probability $\sim \rho/(\bar{\nu} k)$, and to a given child with probability $\sim 1/(\bar{\nu} k)$. Hence, all those events happen infinitely often a.s. by the Borel-Cantelli Lemma. 

In particular, the walk reflected at height $d$ crosses the root loop infinitely often a.s. so the crossing times $\tau^{(d)}_k$ defined as in \eqref{def:tauk} with $S^{(d)}$ in place of $S$ are well-defined and finite a.s. for every $k\geq 1$. We can similarly define the local time process $L^{(d)}(\cdot, k)$ as in \eqref{def:loctimeprocess} but with $S^{(d)}$ in place of $S$. In words, $L^{(d)}(v,k)$ is the number of crossing for the reflected walk of the oriented edge $v \to  \pere{v}$ before crossing the root loop $k$ times.  Now, the crucial observation is that, since we are on a tree and because the walk starts and ends at the root, then  at time $\tau_k$  every edge (apart from the root loop) has been crossed the same number of times in both directions. Indeed, when the walk crosses an edge $v\to \fils{v}{i}$, then it must necessarily return to $v$ through the edge $\fils{v}{i}\to v$ before returning to the root. Looking at the definition of the TBRW, it is now clear that the dynamics of the walk at any given vertex $v$ is independent of what happens at others vertices (because the walk returns by the same edge it exited) and follows, for vertices at height $< d$, the dynamics of the urn process of Section \ref{subsec:urnprocess}. Putting all these arguments together, we conclude that the process $L^{(d)}(\cdot, k)$ restricted to vertices with height $< d$ and the branching Markov chain $Z$ starting from a particle of type $k$ and considered up to generation $d-1$ have the same law. 

By trivial coupling, the processes $(S_n, \; n\leq \tau_k)$ and $(S^{(d)}_n, \; n\leq \tau^{(d)}_k)$ coincide on the event $\{ \tau_k < \infty \hbox{ and }\max_{n \leq \tau_k} S_n < d \} = \{ \max_{n \leq \tau^{(d)}_k} S^{(d)}_n < d\}$. Therefore, the equality in  distributions stated above shows that
\begin{align*}
\P\big(\tau_k < \infty \hbox{ and } \max_{n \leq \tau_k} S_n < d \big) & = \P\big( \max_{n \leq \tau^{(d)}_k} S^{(d)}_n < d \big) \\ 
&= \P\big(Z \hbox{ dies out before generation $d$} \;|\; Z(o) = k\big)
\end{align*}
which proves statement (a) of the proposition by letting $d$ go to infinity. Statement (b) follows from the same argument since, a.s., the walk and its reflected modification coincide on the event $\{ \tau_k < \infty \}$ for $d$ large enough. Finally, statement (c) is a consequence of the deterministic equality
$$\tau_k = k + 2\sum_{v\in \T_{\tau_k}\setminus\{o\}} L(v,k)$$
which holds true when $\tau_k <\infty$ and summarizes the facts that the walk traverse a single edge at each step and that every edge, apart from the root loop, in traversed the same number of times in both direction up to time $\tau_k$. 
\end{proof}
We can now establish a $0-1$ law for the TBRW. 
\begin{corollary}[0-1 law for the recurrence/transience]\label{cor:01law}
We are in exactly one of the two settings: 
\begin{enumerate}
\item Either $\P(\hbox{$Z$ dies out} \;|\; Z(o) = k) < 1$ for all $k\geq 1$. In that case, the infinite tree $\T_\infty$ created by the tree-builder walk consists of a single infinite line on which are grafted finite trees. We have a.s. that $\lim_\infty |S_n| = +\infty$ (in particular the root loop is visited only finitely many times): we say that the walk is transient.
\item Or $\P(\hbox{$Z$ dies out} \;|\; Z(o) = k) = 1$ for all $k\geq 1$. In that case, the tree $\T_\infty$ created by the walk is the full infinite tree a.s. (\emph{i.e.} each vertex has an infinite number of children) and the walk $(S_n)$ visits every vertex of $\T_\infty$ (and the root loop, by Corollary~\ref{cor:rec01}) infinitely many times a.s: we say that the walk is \emph{recurrent}.
\end{enumerate}
\end{corollary}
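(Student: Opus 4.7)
The plan is to set $q_k \defeq \P(Z \text{ dies out} \mid Z(o) = k)$ and exploit the identity $q_k = \P(\tau_k < \infty)$ from Proposition \ref{prop:loctimerec}(a). The nesting $\{\tau_{k+1} < \infty\} \subseteq \{\tau_k < \infty\}$ immediately shows that $q_k$ is non-increasing in $k$. The dichotomy then follows from the irreducibility bound \eqref{eq:irred1}: for any $k, k_0 \geq 1$, a type-$k$ particle produces a type-$k_0$ child (plus possibly some type-$0$ children) with positive probability, so $q_{k_0} < 1$ for some $k_0 \geq 1$ forces $q_k < 1$ for all $k\geq 1$.

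In the transient regime ($q_k < 1$ for all $k \geq 1$), the key observation is $\P(\text{root loop crossed i.o.}) = \P(\bigcap_k \{\tau_k < \infty\}) = \lim_k q_k$, so everything reduces to proving $\lim_k q_k = 0$. Conditioning on the first generation of $Z$, and using $q_0 = 1$ together with $q_i \leq q_1 < 1$ for $i\geq 1$ (by monotonicity of $q_\cdot$), I would obtain
$$
q_k \;\leq\; \E\bigl[q_1^{C_k}\bigr],
$$
where $C_k$ denotes the number of distinct non-zero colors drawn at least once in the TBRW urn by time $\Theta_k$. Lemma \ref{lemma:urnenondegeneree} (which relies on $\bar\nu < \infty$) yields $\Theta_k \to \infty$ a.s.\ and that every non-zero color is eventually drawn, so $C_k \to \infty$ a.s., and dominated convergence closes the bound to give $q_k \to 0$. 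Corollary \ref{cor:rec01} then forces $|S_n| \to \infty$ a.s. Iterating \eqref{eq:vtovo} along the ancestral path of an arbitrary vertex shows that no vertex is visited i.o., so $\T_\infty$ is locally finite; since $S$ escapes to infinity, $\T_\infty$ must contain exactly one infinite backbone, off which all grafted subtrees are finite.

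In the recurrent regime ($q_k = 1$ for all $k\geq 1$), $\P(\tau_k < \infty) = 1$ for every $k$, so the walk crosses the root loop infinitely often a.s. The urn $U_o$ of Proposition \ref{prop:constructurn} is therefore activated infinitely often, and Lemma \ref{lemma:urnenondegeneree} guarantees that every color in $U_o$ is drawn infinitely often while infinitely many colors are introduced. Translating back to the walk: the root has infinitely many children in $\T_\infty$ and each is visited i.o.\ by $S$. A direct induction on depth, applying the same argument at each newly recurrent vertex, gives $\T_\infty = \Tfull$ a.s.\ and that every vertex is visited infinitely often. The main obstacle throughout is establishing $q_k \to 0$ in the transient regime; the bound $q_k \leq \E[q_1^{C_k}]$ together with $C_k \to \infty$ a.s.\ is the essential step, and it hinges on the standing assumption $\bar\nu < \infty$ via Lemma \ref{lemma:urnenondegeneree}. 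The extension to $\bar\nu = \infty$ is then to be recovered from the comparison in Proposition \ref{prop:coupling_nurho}.
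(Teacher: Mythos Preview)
Your proposal is correct and follows essentially the same approach as the paper. The only cosmetic difference is in the presentation of the key step $q_k \to 0$: you write the first-generation branching decomposition as $q_k \leq \E[q_1^{C_k}]$ and conclude via dominated convergence, whereas the paper splits according to $\{C_k \geq N\}$ to get $q_k \leq \P(C_k < N) + q_1^N$ and sends first $k\to\infty$ then $N\to\infty$; these are two phrasings of the same bound, both resting on Lemma~\ref{lemma:urnenondegeneree} to force $C_k \to \infty$ a.s.
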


\begin{proof}
Let us first assume that  $\P(\hbox{$Z$ dies out} \;|\; Z(o) = k) = 1$ for all $k\geq 1$. Then, by Proposition \ref{prop:loctimerec}, we have $\tau_k < \infty$ a.s. for all $k \geq 1$ hence the walk returns to the root infinitely often a.s. But, because we know that the TBRW urn ultimately contains infinitely many colors and that each one is drawn infinitely many times a.s. (Lemma \ref{lemma:urnenondegeneree}), this implies, by induction on the height, that every vertex of the tree is visited infinitely many times and has infinitely many children which are themselves visited infinitely often a.s. 

Conversely, let us assume that $\P(\hbox{$Z$ dies out} \;|\; Z(o) = k_0) < 1$ for some $k_0\geq 1$. Thanks to the irreducibility property \eqref{eq:irred1} of $Z$ of Proposition \ref{prop:iredducibilityZ}, this assumption must, in fact, hold true for all $k \geq 1$. Furthermore, the extinction probability of $Z$ is non-increasing in $k$ in view of (a) of Proposition \ref{prop:loctimerec} and the fact that $\tau_{k+1} > \tau_k$. Therefore, on the one hand, we have for all $k\geq 1$:
$$
\P(\hbox{$Z$ dies out} \;|\; Z(o) = k) \leq \P(\hbox{$Z$ dies out} \;|\; Z(o) = 1) =: \alpha < 1. 
$$
On the other hand, recalling again that the TBRW urn creates infinitely many colors and that each one is drawn infinitely many times, we find that, for any fixed $N$, 
$$
\lim_{k\to\infty} \P(\hbox{$o$ has at least $N$ children with non-zero type} \;| \;Z(o) =k)  = 1 
$$
Therefore, using the branching property of $Z$, we have
$$
\P(\hbox{$Z$ dies out} \;|\; Z(o) = k) \leq \P(\hbox{$o$ has less than $N$ children with non-zero type} \;| \;Z(o) =k)  + \alpha^N 
$$
from which we deduce that (taking $N$ arbitrary large and then $k\rightarrow \infty$)
$$
\P(\tau_k < \infty) = \P(\hbox{$Z$ dies out} \;|\; Z(o) = k) \underset{k\to\infty}{\longrightarrow} 0.
$$
This means that the walk returns to the root only finitely many times a.s. By the same argument as before, it follows by induction that every vertex is visited a finite number of times (and has a finite number of children). Finally, since the walk $(S_n)$ performs only nearest-neighbor moves, we must have $\lim_n |S_n| = \infty$ a.s. and the tree $\T_\infty$ must be one-ended. 
\end{proof}

\section{Proof of theorem \ref{maintheo1}\label{sec:sec3}}

Thanks to Proposition \ref{prop:loctimerec}, we can relate the transience of the TBRW to the survival of the branching Markov chain $Z$. In order to study whether $Z$ dies out or not, we will make use of the following criterion for characterizing the survival of the multi-type branching process $Z$ in terms of the spectral radius of its expectation matrix.
\begin{proposition}\label{prop:lyapounov2}
Consider the urn process of section \ref{subsec:urnprocess} and recall the definitions of $N_k$ and $Y_k^i$ given in \eqref{def:Nk} and \eqref{def:Yki}. Let $f: \N^* \to \R_+$ be a non-negative and non-null function. 
\begin{enumerate}
\item Suppose that there exists $\lambda > 1$ such that 
\begin{equation}\label{eq:lyaponov_supercritical}
\sum_{i=1}^\infty  f(i)\E\left[\sum_{\ell=1}^{N_i} \ind{Y_i^\ell = k}\right] \geq \lambda f(k) \quad\hbox{ for all $k \geq 1$.}
\end{equation}
Then, the branching Markov chain $Z$ is super-critical: it has positive probability to survive indefinitely starting from any non-empty initial configuration. 
\item Suppose that $\sum f(k) < \infty$ and that there exists $\lambda < 1$ such that 
$$
\sum_{i=1}^\infty  f(i)\E\left[\sum_{\ell=1}^{N_i} \ind{Y_i^\ell = k}\right] \leq \lambda f(k) \quad\hbox{ for all $k \geq 1$.}
$$
Then, the branching Markov chain $Z$ is sub-critical: starting from any finite configuration of particles, it dies out almost surely and the total population over the entire lifetime of the process has finite expectation. 
\item  Suppose that $\sum f(k) < \infty$ and 
$$
\sum_{i=1}^\infty  f(i)\E\left[\sum_{\ell=1}^{N_i} \ind{Y_i^\ell = k}\right] = f(k) \quad\hbox{ for all $k \geq 1$.}
$$
Then, the branching Markov chain $Z$ is critical: starting from any finite configuration of particles, it dies out almost surely but the total population over the entire lifetime of the process has infinite expectation. 
\end{enumerate}
\end{proposition}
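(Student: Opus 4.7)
My plan is to reduce the three parts of Proposition~\ref{prop:lyapounov2} to statements about iterates of the non-negative mean-matrix operator
\[
M(i,k)\defeq \E\Bigl[\sum_{\ell=1}^{N_i}\ind{Y_i^\ell=k}\Bigr],\qquad i,k\in\N^*.
\]
The central object is the expected total progeny $H(k)\defeq \E_k[|\T|]$ of the branching Markov chain started from a single particle of type $k$. Conditioning on the first generation yields the affine fixed-point equation $H=\mathbf{1}+MH$, hence $H=\sum_{n\geq 0}M^n\mathbf{1}$ by monotone iteration. Each of the three hypotheses ($fM\geq \lambda f$, $fM\leq \lambda f$, $fM=f$) lifts by a trivial induction to the corresponding inequality on $fM^n$ versus $\lambda^n f$, using only non-negativity of the entries.

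For the sub-critical part~(2), pairing with $\mathbf{1}$ gives
\[\sum_k f(k)\,H(k)=\sum_{n\geq 0}\langle fM^n,\mathbf{1}\rangle\leq \Bigl(\sum_k f(k)\Bigr)\sum_{n\geq 0}\lambda^n=\frac{\sum_k f(k)}{1-\lambda}<\infty,\]
so $H(k^*)<\infty$ for some $k^*$ with $f(k^*)>0$. The recurrence $H=\mathbf{1}+MH$ combined with the irreducibility of Proposition~\ref{prop:iredducibilityZ} (which lets any type reach $k^*$ with positive probability within finitely many generations) propagates finiteness of $H$ to every $k\geq 1$, so $|\T|<\infty$ $\P_k$-a.s., which is exactly a.s.\ extinction. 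For the critical part~(3), the same computation yields $\sum_k f(k)\,H(k)=\infty$, hence $H\equiv +\infty$ on $\N^*$ (again by irreducibility), establishing the infinite-expected-population claim. For a.s.\ extinction in the critical case, I would study the non-extinction probability $r(k)\defeq 1-\P_k(\text{ext.})$: the fixed-point equation $r(k)=1-\E[\prod_\ell(1-r(Y_k^\ell))]$ together with $1-\prod(1-x_i)\leq \sum x_i$ gives $r\leq Mr$ pointwise, and pairing with $f$ (using $fM=f$) forces $r(k)=(Mr)(k)$ on the support of $f$. Since equality in $1-\prod(1-x_i)\leq \sum x_i$ holds if and only if at most one $x_i>0$, it follows that, for $k$ with $f(k)>0$, $\P_k$-a.s.\ at most one child type $Y_k^\ell$ has $r>0$. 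But if $r$ were non-zero somewhere, then Proposition~\ref{prop:iredducibilityZ} would force $r>0$ everywhere on $\N^*$, and~\eqref{eq:irred1} would allow a type-$k$ parent to produce two children of type $1$ with positive probability, contradicting the above and forcing $r\equiv 0$.

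Part~(1) is the main obstacle, because $fM\geq \lambda f$ is a \emph{left} super-eigenvector condition and does not directly yield an exponential sub-martingale $\sum_v f(Z(v))/\lambda^n$ (which would require the \emph{right} super-eigenvector property $Mf\geq \lambda f$). My plan is to first use $(fM^n)(k^*)\geq \lambda^n f(k^*)$ for some $k^*$ with $f(k^*)>0$, read as
\[\sum_i f(i)\,\E_i\bigl[\#\{v:|v|=n,\,Z(v)=k^*\}\bigr]\geq \lambda^n f(k^*),\]
and then invoke Proposition~\ref{prop:iredducibilityZ} to transfer this exponential lower bound to $\E_k[\#\{v:|v|=n,\,Z(v)=k^*\}]$ from every starting type $k\geq 1$, at the cost of a fixed number of extra generations. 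To upgrade this first-moment estimate to a positive survival probability, I would pass to the \emph{embedded} single-type Galton--Watson tree of type-$k^*$ particles (each type-$k^*$ particle being viewed as having as offspring its type-$k^*$ descendants in the first generation of its subtree where such descendants occur); a standard renewal/generating-function argument identifies the mean offspring of this embedded tree with the Perron eigenvalue $\rho(M)$, and $\rho(M)\geq \lambda>1$ follows from the Collatz--Wielandt characterization applied, if needed, to finite truncations $M_N$ of $M$ to types $\{1,\ldots,N\}$ with $\rho(M_N)\uparrow \rho(M)$. The super-criticality of the embedded Galton--Watson tree then yields positive survival probability for $Z$, as required.
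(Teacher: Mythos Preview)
Your treatments of parts~2 and~3 are correct and close to the paper's. For extinction in part~3 you take a different route, via the survival function $r$ and the convexity inequality $1-\prod(1-x_i)\leq\sum x_i$; this works (the pairing $\langle f,Mr\rangle=\langle fM,r\rangle=\langle f,r\rangle$ is finite since $r\leq 1$ and $\sum f<\infty$, so the pointwise inequality $r\leq Mr$ is forced to equality on $\supp f$) and is arguably cleaner than the paper's argument, which instead shows that a positive survival probability would, via~\eqref{eq:irred2} and irreducibility, force arbitrarily large generations to occur almost surely on the survival event, contradicting the fact that the expected generation size stays equal to~$1$ when $fM=f$.

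Part~1, however, has a genuine gap. The claim that ``a standard renewal/generating-function argument identifies the mean offspring of this embedded tree with the Perron eigenvalue $\rho(M)$'' is false: already for $M=\left(\begin{smallmatrix}0&2\\2&0\end{smallmatrix}\right)$ one has $\rho(M)=2$ while the embedded mean at type~$1$ equals $M_{1,2}M_{2,1}=4$. (Your proposed definition of the embedded tree---``type-$k^*$ descendants in the first generation of its subtree where such descendants occur''---also conflates distinct branches and does not produce a Galton--Watson structure; the correct construction takes, along each lineage, the first type-$k^*$ descendant.) What \emph{is} true is that the embedded tree is supercritical if and only if $\rho(M)>1$, but establishing this in the countably-infinite-type setting is no easier than the direct route. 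The paper bypasses the embedded tree entirely and uses precisely the truncation idea you mention at the end: the hypothesis $fM\geq\lambda f$ forces the convergence parameter of $M$ to be at most $1/\lambda<1$; Seneta's theorem gives that the convergence parameters of the finite irreducible truncations $M_{|L}$ decrease to that of $M$; hence some $M_{|L_0}$ has Perron--Frobenius eigenvalue strictly greater than~$1$, so the finite-type process $Z_{|L_0}$ (obtained from $Z$ by killing every particle of type outside $\{1,\dots,L_0\}$) is supercritical by the classical theory, and $Z$ dominates it. That is the entire argument for part~1---no embedded single-type tree is needed.
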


\begin{proof}
We first prove $1.$ Let $M = (M_{i,j})_{i,j \geq 1}$ denote the expectation matrix for the branching Markov chain $Z$ (ignoring particles of type $0$ which do not reproduce): 
$$
M_{i,j} \defeq \hbox{expected number of children of type j for a particle of type i} = \E\left[\sum_{\ell=1}^{N_i} \ind{Y_i^\ell = j}\right]
$$
Let us first suppose that $M_{i_0,j_0} = \infty$ for some $i_0,j_0 \geq 1$. Then, by irreducibility of $Z$ (Proposition \ref{prop:iredducibilityZ}) we have $M^2_{i_0,i_0} \geq M_{i_0,j_0} M_{j_0,i_0} = \infty$ so the expected number of particles of type $i_0$ created by a single particle of the same type after two generations is infinite. By comparison with a classical (single-type) branching process, this shows that $Z$ is supercritical. 

We now assume $M_{i,j} < \infty$ for all $i,j$.  Inequality \eqref{eq:lyaponov_supercritical} can be rewritten in matrix form $f M \geq \lambda f$ so it shows that the spectral radius of $M$ is smaller or equal to $1/\lambda$. For $L\geq 1$, let $M_{|L}$ denote the finite matrix obtained by restricting $M$ to $\llbracket  1,L  \rrbracket$. Due to~\eqref{eq:irred1}, the matrix $M_{|L}$ is irreducible (all its coefficients are strictly positive), therefore, according to~\cite[Theorem 6.8]{Seneta}, as $L$ goes to infinity, the spectral radius of $M_{|L}$ decreases to that of $M$. Since $\lambda>1$, we can choose $L_0$ large enough so that the spectral radius of $M_{|L_0}$ is $1/\lambda_0 < 1$. Consider now the modified branching Markov chain $Z_{|L_0}$ constructed from $Z$ by instantaneously removing all particles of types outside $\llbracket  1,L_0  \rrbracket$. It is a classical, irreducible, multi-type branching process (with a finite number of types) with expectation matrix $M_{|L_0}$ which has its Perron-Frobenius eigenvalue $\lambda_0 > 1$. Standard results about multi-type branching processes (see for instance~\cite[Theorem II.7.1]{Harris}) ensure that $Z_{|L_0}$ is super-critical and therefore $Z$ also has a positive probability of survival. 

We now establish $2.$ Since $\sum f(k) < \infty$, we can assume without loss of generality that $\sum f(k) = 1$ (because $f$ is assumed to be non-zero). Starting the branching Markov chain $Z$ from a single particle with initial random type distributed as $f$, we expect at time $n$, for each $k\geq 1$, no more than $\lambda^n f(k)$ particles of type $k$ on average. Therefore, the expected total number of non-zero type particles that will ever exist in the system is $\sum_n \sum_k f(k)\lambda^n = 1/(1-\lambda) < \infty$. In particular, its expectation is finite so that the total number of particles is a fortiori an a.s. finite random variable. Hence the process dies out a.s. By irreducibility, this result remains true when starting from any finite configuration of particles. 

Finally, in case $3.$, 
the same argument as above shows that starting from a single particle with type distributed as $f$, the expected number of non-zero type particles at time $n$ is always $1$. Now, assume that for some $k\geq 1$ such that $f(k)>0$, there is a positive probability $p_k$ that $Z$ started from a single particle of type $k$ survives indefinitely. On this event, there are a.s. infinitely many generations containing at least one vertex of type 1, by~\eqref{eq:irred2} and the Borel-Cantelli Lemma. Since $Z$ is irreducible (hence a vertex of type 1 can have an arbitrarily large offspring of vertices of type $k$), by the same argument, for every $\ell\geq 1$ there is a.s. at least one generation containing $\ell$ vertices of type $k$, each having then probability $p_k$ to survive indefinitely. Therefore, starting from a single vertex of type $k$ (resp. of type distributed as $f$), there is a probability $p_k>0$ (resp.~$f(k)p_k>0$) that the sizes of the successive generations of $Z$ grow to infinity. But it is standard to check that this contradicts the fact that the expected size of each generation is 1. 
Hence we must have $p_k=0$, and thus $p_j=0$ for all $j\geq 1$ by irreducibility: the process $Z$ dies a.s., regardless of the type of the initial particle. 
\end{proof}

Recall the definition of the urn process of section \ref{subsec:urnprocess}.

\begin{lemma} \label{lemma:EN}For any $\rho,\nu$ such that $\rho>\bar\nu$, we have:
\begin{equation}\label{eq:EN}
\E[N_k - N_{k-1}]= \bar \nu \left ( \frac{\rho}{\rho-\overline{\nu}}\right )^k 
\end{equation}
In particular, we have $\E[N_k] < \infty$ for all $k$. 
\end{lemma}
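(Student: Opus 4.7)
The plan is to introduce an explicit non-negative martingale that jointly tracks the total population in the urn and the number of times color~$0$ has been drawn, and then apply optional stopping at $\Theta_k$. Set $V_n \defeq \rho + \sum_{i=1}^n \xi_i$ (the number of balls in the urn just after step~$n$), $K_n \defeq \sum_{i=1}^n \ind{B_i = 0}$ (the number of draws of color~$0$ up to step~$n$), and define
$$
c \defeq \frac{\rho - \bar\nu}{\rho} \in (0,1), \qquad M_n \defeq V_n\, c^{K_n}.
$$
Given $\calF_n$ and $\xi_{n+1}$, a ball of color~$0$ is drawn at step~$n+1$ with probability $\rho/(V_n+\xi_{n+1})$, so
$$
\E\bigl[M_{n+1} \bigm| \calF_n, \xi_{n+1}\bigr] = c^{K_n}(V_n + \xi_{n+1}) \cdot \frac{c\rho + (V_n + \xi_{n+1} - \rho)}{V_n + \xi_{n+1}} = c^{K_n}\bigl(V_n + \xi_{n+1} + (c-1)\rho\bigr).
$$
Averaging over $\xi_{n+1}$ and using the key identity $(c-1)\rho = -\bar\nu$ gives $\E[M_{n+1}\mid \calF_n] = M_n$, so $(M_n)$ is a non-negative martingale with $M_0 = \rho$.

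Lemma~\ref{lemma:urnenondegeneree} ensures $\Theta_k < \infty$ a.s., hence $M_{\Theta_k \wedge n} \to M_{\Theta_k} = (\rho + N_k)\, c^k$ almost surely. The bounded-time optional stopping theorem gives $\E[M_{\Theta_k \wedge n}] = \rho$ for every $n$, and Fatou's lemma yields the \emph{a priori} bound $c^k\,\E[\rho + N_k] \leq \rho$; in particular $V_{\Theta_k} = \rho + N_k$ is integrable. To upgrade this to an equality, decompose
$$
M_{\Theta_k \wedge n} = M_{\Theta_k}\,\ind{\Theta_k \leq n} + M_n\,\ind{\Theta_k > n}.
$$
The first summand converges to $M_{\Theta_k}$ in $L^1$ by monotone convergence. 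For the second, the monotonicity of $(V_n)$ together with $c \leq 1$ gives $M_n \leq V_n \leq V_{\Theta_k}$ on $\{\Theta_k > n\}$, so dominated convergence (with integrable dominant $V_{\Theta_k}$) forces $\E[M_n\,\ind{\Theta_k > n}] \to 0$. Passing to the limit in $\E[M_{\Theta_k \wedge n}] = \rho$ therefore yields $\E[M_{\Theta_k}] = \rho$, that is
$$
\E[N_k] = \rho\left(\left(\frac{\rho}{\rho - \bar\nu}\right)^k - 1\right),
$$
and subtracting the values at $k$ and $k-1$ produces~\eqref{eq:EN}.

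The only delicate point is the $L^1$ convergence at the stopping time: one first needs the Fatou \emph{a priori} bound to secure integrability of $V_{\Theta_k}$, and then to feed it back as a dominant for the residual term $M_n\,\ind{\Theta_k > n}$. The rest of the argument reduces to a direct verification of the martingale identity and a one-line algebraic rearrangement.
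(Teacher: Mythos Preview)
Your proof is correct and takes a genuinely different route from the paper.

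The paper proceeds via a continuous-time Poissonization: it embeds the urn in continuous time with exponential clocks, introduces an auxiliary \emph{modified} urn in which no balls are added after a draw of color~$0$, computes $\E[\bar N_k]$ by solving a linear ODE for the mean number of colors and integrating against the Gamma density of the $k$-th ring of the color-$0$ clock, and finally relates $\bar N_k$ to $N_k - N_{k-1}$ through a relabelling argument on the original urn. Your approach stays entirely in discrete time and works directly on the original urn, exhibiting the exact exponential martingale $M_n = V_n\,c^{K_n}$ and applying optional stopping at $\Theta_k$ with a clean Fatou-then-domination trick to handle uniform integrability. This is shorter, avoids the auxiliary process and the continuous-time machinery, and yields $\E[N_k]$ in closed form rather than just the increments. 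The paper's method, on the other hand, makes the branching structure behind $N_k - N_{k-1}$ more transparent (each ``wave'' of colors born after the $j$-th draw of color~$0$ evolves as an independent copy of the modified urn), which may be conceptually useful elsewhere; but for the bare computation of \eqref{eq:EN} your martingale argument is more economical.
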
 
\begin{proof}
We consider the following variant of the urn process defined in Section \ref{subsec:urnprocess}. First, we start with $\rho$ balls of color $0$ and a random number $r$ of balls with colors $1, \ldots , r$, where $r$ is distributed as $\nu$. Second, whenever we pick a ball of color $0$, we do not add any additional balls in the urn. The rest of the dynamics is unchanged. Let $\bar N_k$ be the number of balls in this modified urn when we pick the ball $0$ for the $k$-th time. We claim that 
\begin{equation}\label{eq:ENbar}
\E[\bar N_k]= \bar \nu\left (\frac{\rho}{\rho-\bar \nu}\right )^k<\infty,
\end{equation}
which we will show at the end of the proof.

We compare this modified urn process with the original urn process from Section~\ref{subsec:urnprocess}. Consider the following change of color attribution for the original urn: instead of naming the colors in $\N$ we name them from $\{0\}\cup (\N^* \times \N^*)$ such that:
\begin{itemize}
\item[(a)] when we add $\ell$ new balls after picking the ball of color $(i,j)$, we give them the colors $(i,k), \ldots , (i,k+\ell-1)$, where $k$ is the smallest positive integer such that $(i,k)$ is not yet in the urn;
\item[(b)] when we add $\ell$ new balls after picking color $0$ for the $i$-th time, we give them the colors $(i,1), \ldots , (i,\ell)$.
\end{itemize}

Now, we claim that for every $k\geq 1$, 
\begin{equation}\label{eq:ENtotal}
\E[N_k]=\sum_{j=1}^k\E[\bar N_j]    
\end{equation}
which thus implies (by subtracting the LHS for two consecutive values of $k$) that for every $k\geq 1$,
\begin{equation}\label{eq:ENdiff}
    \E[ N_k- N_{k-1}]=\E[\bar N_k].
\end{equation}
Together with~\eqref{eq:ENbar}, this concludes the proof of the lemma. 

\medskip

\noindent\textbf{Proof of~\eqref{eq:ENbar}.} In order to inject some independence into the modified urn process, instead of drawing the balls one by one, we put an exponential clock of rate $1$ for each ball of color $i\geq 1$ and an exponential clock of rate $\rho$ for color $0$ and pick them when their clock rings. Recall that we start at time $0$ with $\rho$ balls of color $0$ and $r$ ball of respective colors $1,2,\ldots,r$ with $r$ random and distributed as $\nu$. When color $0$ rings, we do nothing. When a color $i\geq 1$ rings, we add a random number $r'$ of balls, where $r'$ is again distributed as $\nu$ and we give new colors to each of these balls. 

We focus on $C_t$, the number of colors $i\geq 1$ at time $t$. In a small interval $[t,t+dt]$, with probability $C_tdt$ one of the clocks corresponding to a color $i\geq 1$ rings and the expected number of new colors then added is $\bar \nu$. Hence,
\[ \frac{d}{dt}\E[C_t]=\bar\nu\E[C_t]. \]
Since $\E[C_0]=\bar \nu, 
$ we find $\E[C_t]=\bar \nu e^{\bar\nu t}$.

Next we look at the stopping time $T_k$ when $0$ rings for the $k$-th time. By definition, $T_k$ is a sum of $k$ independent exponential random variables of parameter $\rho$. Hence, $T_k$ has probability density $\frac{\rho}{\Gamma(k)} (\rho x)^{k-1}e^{-\rho x},\, x\geq 0$. Therefore, by independence of the clocks,
\[\E[\bar N_k]=\E[C_{T_k}]= \bar \nu\frac{\rho}{\Gamma(k)}\int_0^\infty e^{\bar\nu x} (\rho x)^{k-1}e^{-\rho x} dx = \bar \nu\left (\frac{\rho}{\rho-\bar \nu}\right )^k.\]

\medskip

\noindent\textbf{Proof of~\eqref{eq:ENtotal}.}
For $k\geq j\geq 1$, let $N_{k,j}$ be the number of balls in the original urn process at time $\Theta_k$ (i.e.~when we pick 0 for the $k$-th time) whose color is in $j\times \mathbb{N}^*$. Remark that it is enough to show the following statement: for all $k\geq j\geq 1$, $N_{k,j}$ is distributed as $\bar N_{k-j}$. 
\\
For fixed $k\geq j\geq 1$, when we pick color 0 for the $j$-th time, we then add the first $r$ balls of color in $j\times \mathbb{N}$, where $r\sim \nu$. Since the balls with color $\N^*\backslash \{j\}\times \N^*$ do not affect the probability that the next color picked among $\{0\}\cup \{j\times \mathbb{N}^*\}$ is 0, we have that $N_{k,j}$ is distributed as $\bar N_{k-j}$. 
\end{proof}

\begin{lemma}\label{lemma:geom}
For any $(\rho,\nu)$ such that $\rho>\bar\nu$, any non-negative function $f$ and any $k\geq 0$, we have the identity
$$
\E\left[\sum_{i=1}^{N_k} f(Y_k^i)\right] \; = \; \bar\nu\sum_{j=1}^k \E\big[f(\zeta_1+\ldots+\zeta_{j})\big] \Big(\frac{\rho}{\rho-\bar\nu}\Big)^{k+1-j}
$$
where $(\zeta_i)_{i\geq 1}$ is a sequence of i.i.d. random variables with geometric distribution with parameter $\frac{\rho}{1+\rho}$ starting from $0$ \emph{i.e.} $\P(\zeta_i = j) = \frac{\rho}{1+\rho}\big(\frac{1}{1+\rho}\big)^j$ for $j\geq 0$.
\end{lemma}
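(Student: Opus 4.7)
The plan is to group the non-zero colors present in the urn at step $\Theta_k$ by their \emph{birth epoch}: for $1 \leq b \leq k$, epoch $b$ consists of the steps between the $(b-1)$-th and $b$-th draws of color $0$, and $M_b := N_b - N_{b-1}$ counts the colors introduced during that epoch. Lemma~\ref{lemma:EN} directly provides $\E[M_b] = \bar\nu\,(\rho/(\rho-\bar\nu))^b$. Setting $j = k-b+1$, the statement will reduce to
\[ \E\!\left[\sum_{i=1}^{N_k} f(Y_k^i)\right] \;=\; \sum_{b=1}^{k} \E[M_b]\cdot \E\!\left[f(\zeta_1+\ldots+\zeta_{k-b+1})\right]. \]

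The key tool will be a \emph{restriction argument}. Since each non-zero color is carried by exactly one ball and color $0$ by exactly $\rho$ balls throughout the process, for any fixed non-zero color $c$ and any $m \geq \tau(c)$ one has $\P(B_m = c \mid B_m \in \{0,c\}, \mathcal{F}_{m-1}) = 1/(1+\rho)$ regardless of the state of the rest of the urn. Consequently, the sub-sequence of picks in $\{0, c\}$ from step $\tau(c)$ onwards is i.i.d.\ Bernoulli with parameter $\rho/(1+\rho)$ for color $0$. If $c$ is born in epoch $b$, then precisely $k - b + 1$ picks of $0$ occur in the window $[\tau(c), \Theta_k]$, and a gap-counting computation shows that $Y_k^c$ is distributed as $\zeta_1 + \ldots + \zeta_{k-b+1}$, independently of the detailed urn history up to step $\tau(c)$.

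To combine these ingredients, I would let $c_{n,1}, \ldots, c_{n,\xi_n}$ denote the colors introduced at step $n$, write
\[ \E\!\left[\sum_{i=1}^{N_k} f(Y_k^i)\right] \;=\; \sum_{n \geq 1} \E\!\left[\ind{n\leq \Theta_k}\sum_{j=1}^{\xi_n} f(Y_k^{c_{n,j}})\right], \]
and condition on $\mathcal{F}_{n-1}$ and $\xi_n$, so that the current epoch $b(n)$ is known and $\{n\leq\Theta_k\} = \{b(n)\leq k\}$. Applying the restriction argument to each of the $\xi_n$ newly created colors, the conditional expectation of each inner summand equals the deterministic quantity $\E[f(\zeta_1+\ldots+\zeta_{k-b(n)+1})]$. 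Regrouping over $n$ according to the value of $b(n)$ produces the factor $\sum_{n\in\text{epoch }b}\xi_n = M_b$ in the $b$-th block, and combining with Lemma~\ref{lemma:EN} and re-indexing $j = k-b+1$ yields the formula.

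The main technical subtlety I anticipate is that the variables $Y_k^c$ and $Y_k^{c'}$ for two colors born in the same epoch are \emph{not} independent: applying the restriction argument to $\{0, c, c'\}$ reveals they are positively correlated. However, what the argument needs is only the \emph{marginal} distribution of each $Y_k^c$ (together with the exchangeability of the $c_{n,j}$ for fixed $n$), and this is exactly what the restriction trick delivers, with no coupling to $\xi_n$ or to the rest of the urn state.
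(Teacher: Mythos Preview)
Your proposal is correct and follows essentially the same route as the paper: both group the colors by the epoch $\ell$ (your $b$) in which they appear, use the restriction argument that the $\{0,c\}$-subsequence is i.i.d.\ Bernoulli with parameter $\rho/(1+\rho)$ to identify the conditional law of $Y_k^c$ as a negative binomial $\zeta_1+\cdots+\zeta_{k-\ell+1}$, and then invoke Lemma~\ref{lemma:EN} for $\E[N_\ell-N_{\ell-1}]$ before re-indexing. The only cosmetic difference is that the paper sums over the color index $i$ and conditions on the event $\{N_{\ell-1}<i\leq N_\ell\}$, whereas you sum over the creation time $n$ and condition on $\mathcal{F}_{n-1}$ and $\xi_n$; the computations are otherwise identical.
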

\begin{proof}
We have
$$
\E\left[\sum_{i=1}^{N_k} f(Y_k^i)\right] = \E\left[\sum_{i=1}^{\infty} f(Y_k^i)\ind{i \leq N_k}\right]
= 
\sum_{i=1}^\infty  \sum_{\ell=1}^k \E\left[ f(Y_k^i)\ind{N_{\ell-1} < i \leq N_\ell}\right]
$$
Now, for any fixed $i\geq 1$ and $1\leq \ell \leq k$, conditionally on the event $\{N_{\ell-1} < i \leq N_\ell\}$,  the random variable $Y_k^i$ has the same law as $\zeta_1+\ldots+\zeta_{k-\ell+1}$. To see why this is true, notice that, in the urn process, on $\{N_{\ell-1} < i \leq N_\ell\}$, color $i$ appears in the urn between the $\ell-1$ and the $\ell$ draw of color $0$ and therefore $Y_k^i$ is the number of times the ball of color $i$ will be drawn before drawing color $0$ another $k-\ell+1$ times. Now, irrespectively of other balls being added to the urn, the probability of drawing a ball of color $0$ is always $\rho$ times larger than that of drawing a ball of color $i$. Therefore, $Y_k^i$ follows the law of the number of failures before having $k-\ell+1$ successes in a sequence of independent Bernoulli trials with parameter $\rho/(1+\rho)$. This is the negative binomial distribution which has the same law as $\zeta_1+\ldots+\zeta_{k-\ell+1}$.  Therefore, 
\begin{align*}
\E\left[\sum_{i=1}^{N_k} f(Y_k^i)\right] &= \sum_{i=1}^\infty  \sum_{\ell=1}^k \E\left[ f(\zeta_1+\ldots+\zeta_{k-\ell+1})\right]\P(N_{\ell-1} < i \leq N_\ell)\\
& = \sum_{\ell=1}^k \E\left[ f(\zeta_1+\ldots+\zeta_{k-\ell+1})\right]\sum_{i=1}^\infty  \P(N_{\ell-1} < i \leq N_\ell)\\
& = \sum_{\ell=1}^k \E\left[ f(\zeta_1+\ldots+\zeta_{k-\ell+1})\right] \E[N_{\ell}- N_{\ell-1}]\\
& = \sum_{\ell=1}^k \E\left[ f(\zeta_1+\ldots+\zeta_{k-\ell+1})\right] \bar\nu\Big(\frac{\rho}{\rho- \bar\nu}\Big)^{\ell} \\
& = \bar\nu\sum_{j=1}^k \E\left[ f(\zeta_1+\ldots+\zeta_{j})\right] \Big(\frac{\rho}{\rho- \bar\nu}\Big)^{k-j+1}
\end{align*}
where we used \eqref{eq:EN} to compute $\E[N_{\ell}-N_{\ell-1}]$. 
\end{proof}

\begin{proposition}
For any $(\rho,\nu)$ such that $\rho>\bar\nu$ and $0 < s < \frac{\rho - \bar\nu}{\rho}$, we have, for all $k\geq 1$, 
$$
\sum_{n=1}^\infty  s^n \E\left[\sum_{\ell=1}^{N_n} \ind{Y_n^\ell = k}\right] = \frac{\bar\nu \rho^2 s}{(\rho- \bar\nu-\rho s)(\rho+1-\rho s)} \left(\frac{1}{\rho+1-\rho s}\right)^k
$$
\end{proposition}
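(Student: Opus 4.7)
The plan is to apply Lemma~\ref{lemma:geom} with the indicator $f(x) = \ind{x=k}$, swap the order of the resulting double sum, perform a geometric summation in one variable, and identify the remaining generating function as that of a negative binomial distribution.

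More precisely, applying Lemma~\ref{lemma:geom} gives
$$
\E\left[\sum_{\ell=1}^{N_n} \ind{Y_n^\ell = k}\right] = \bar\nu \sum_{j=1}^n \P(\zeta_1 + \cdots + \zeta_j = k) \left(\frac{\rho}{\rho-\bar\nu}\right)^{n+1-j}.
$$
I would then multiply by $s^n$ and sum over $n\geq 1$. Since all terms are nonnegative, Fubini allows a free interchange: grouping by $j$ and setting $m = n-j$, the inner sum over $m \geq 0$ is geometric in ratio $s\rho/(\rho-\bar\nu)$, which converges exactly under the hypothesis $s < (\rho-\bar\nu)/\rho$. This produces a prefactor of $\bar\nu\rho/(\rho-\bar\nu-\rho s)$ multiplying
$$
G_k(s) \defeq \sum_{j=1}^\infty s^j \, \P(\zeta_1 + \cdots + \zeta_j = k).
$$

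To compute $G_k(s)$, recall that $\zeta_1+\cdots+\zeta_j$ follows a negative binomial law, so
$$
\P(\zeta_1+\cdots+\zeta_j = k) = \binom{k+j-1}{j-1}\left(\frac{\rho}{1+\rho}\right)^j\left(\frac{1}{1+\rho}\right)^k.
$$
Setting $q = s\rho/(1+\rho)$ and shifting the index to $i = j-1$, one recognizes the standard identity
$$
\sum_{i=0}^\infty \binom{k+i}{i} q^i = \frac{1}{(1-q)^{k+1}},
$$
valid because $q < 1$ (which follows from $s < (\rho-\bar\nu)/\rho < 1$). A short simplification, using $1-q = (\rho+1-\rho s)/(1+\rho)$, gives $G_k(s) = s\rho/(\rho+1-\rho s)^{k+1}$. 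Multiplying by the prefactor recovers exactly the claimed formula.

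No real obstacle is expected: convergence of the two summations is controlled by the single assumption $s<(\rho-\bar\nu)/\rho$ (which also implies $s\rho<1+\rho$), positivity of the summands legitimizes the swap of sums, and the rest is algebraic manipulation together with the generating function of the negative binomial.
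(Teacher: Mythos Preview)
Your proposal is correct and follows essentially the same approach as the paper's own proof: apply Lemma~\ref{lemma:geom} with $f=\ind{\cdot=k}$, swap the double sum, perform the geometric summation in $n$ (equivalently in $m=n-j$), insert the negative binomial mass function, and evaluate the remaining series via $\sum_i \binom{k+i}{i}q^i=(1-q)^{-(k+1)}$. Your justification of the two convergence conditions is, if anything, slightly more explicit than the paper's.
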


\begin{proof}
We simply compute, using Lemma~\ref{lemma:geom} with $f(x)=\mathbf{1}_{x=k}$ in the first line:
\begin{align*}
\sum_{n=1}^\infty  s^n \E\left[\sum_{\ell=1}^{N_n} \ind{Y_n^\ell = k}\right] & =
\bar\nu\sum_{n=1}^\infty  s^n \sum_{j=1}^n \E\left[ \ind{\zeta_1 + \ldots + \zeta_j = k}\right]  \Big(\frac{\rho}{\rho-\bar\nu}\Big)^{n-j+1}\\
&= \bar\nu\sum_{j=1}^\infty \P(\zeta_1 + \ldots + \zeta_j = k) \Big(\frac{\rho}{\rho-\bar\nu}\Big)^{-j+1}\sum_{n\geq j}\Big(\frac{s\rho}{\rho-\bar\nu}\Big)^{n}\\
&=  \frac{\bar\nu \rho}{\rho-\bar\nu-\rho s} \sum_{j=1}^\infty \P(\zeta_1 + \ldots + \zeta_j = k) s^j\\
&= \frac{\bar\nu\rho}{\rho-\bar\nu-\rho s} \sum_{j=1}^\infty \Big(\frac{\rho}{\rho+1}\Big)^j\Big(\frac{1}{\rho+1}\Big)^{k} \binom{k+j-1}{j-1}s^j\\
&= \frac{\bar\nu\rho^2s}{(\rho-\bar\nu-\rho s)(\rho+1)^{k+1}} \sum_{i=0}^\infty \Big(\frac{\rho s}{\rho+1}\Big)^{i} \binom{k+i}{i}\\
&= \frac{\bar\nu\rho^2s}{(\rho-\bar\nu-\rho s)(\rho+1)^{k+1}} \left(\frac{1}{1-\frac{\rho s}{\rho+1}}\right)^{k+1}\\
& =  \frac{\bar\nu \rho^2 s}{(\rho-\bar\nu-\rho s)(\rho+1-\rho s)} \left(\frac{1}{\rho+1-\rho s}\right)^k.
\end{align*}

\end{proof}

\begin{corollary}\label{cor:vecpropre}Let $(\rho,\nu)$ be such that $\rho > 1+\bar\nu$, and define $f(k) \defeq \frac{1}{\rho^k}$. We have, for all $k\geq 1$, 
$$
\sum_{i=1}^\infty  f(i) \E\left[\sum_{\ell=1}^{N_i} \ind{Y_i^\ell = k}\right] = \frac{\bar\nu}{\rho-\bar\nu-1} f(k).
$$
\end{corollary}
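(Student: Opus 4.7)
The plan is to apply the previous proposition with the specific value $s = 1/\rho$. Observe that with this choice, $s^n = f(n)$, so the left-hand side of the previous proposition becomes exactly the sum we need to evaluate.

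First I would verify that $s = 1/\rho$ lies in the range of validity of the proposition, namely $0 < s < (\rho-\bar\nu)/\rho$. The lower bound is clear since $\rho > 0$. For the upper bound, observe that
$$
\frac{1}{\rho} < \frac{\rho - \bar\nu}{\rho} \iff 1 < \rho - \bar\nu \iff \rho > 1 + \bar\nu,
$$
which is precisely the hypothesis of the corollary. In particular, both sides of the desired identity will be finite.

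Next I would plug $s = 1/\rho$ into the right-hand side of the previous proposition and simplify the three factors that arise: $\rho - \bar\nu - \rho s = \rho - \bar\nu - 1$, $\rho + 1 - \rho s = \rho$, and $\bar\nu \rho^2 s = \bar\nu \rho$. Putting these together,
$$
\frac{\bar\nu \rho^2 \cdot (1/\rho)}{(\rho - \bar\nu - 1) \cdot \rho} \cdot \frac{1}{\rho^k} = \frac{\bar\nu}{\rho - \bar\nu - 1} \cdot f(k),
$$
which is exactly the claimed identity.

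There is no real obstacle here: the corollary is an immediate specialization of the preceding generating function computation, and the condition $\rho > 1+\bar\nu$ is exactly what is needed to evaluate the series at $s = 1/\rho$. The only thing worth emphasizing for the reader is why this particular value of $s$ produces such a clean ``eigenvector'' identity; it comes from the fact that the geometric factor $1/(\rho+1-\rho s)^k$ in the previous proposition collapses to $1/\rho^k = f(k)$ exactly when $s = 1/\rho$.
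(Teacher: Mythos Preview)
Your proof is correct and is exactly the intended argument: the paper states this result as a corollary with no separate proof, meaning it follows immediately from the preceding proposition by the substitution $s = 1/\rho$, and you have carried out that specialization and verified the hypothesis $0 < s < (\rho-\bar\nu)/\rho$ correctly.
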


\bigskip

\begin{proof}[Proof of Theorem \ref{maintheo1}]
Let us first assume that $\rho > 1+\bar\nu$. We observe that
$$
 \frac{\bar\nu}{\rho-\bar\nu-1}
 \begin{cases}
 < 1 &\hbox{if $\rho > 2\bar\nu + 1$,}\\
 = 1 &\hbox{if $\rho = 2\bar\nu + 1$,}\\
 > 1 &\hbox{if $\rho < 2\bar\nu + 1$.}
 \end{cases}
$$
Combining Corollary \ref{cor:vecpropre}, Proposition \ref{prop:lyapounov2} and Proposition \ref{prop:loctimerec}, we conclude that the TBRW is recurrent for $\rho \geq 1 + 2\bar\nu$ and transient for $ \rho \in (1+\bar\nu,  1+ 2\bar\nu)$. Furthermore, for $\rho > 1 + 2\bar\nu$, Proposition \ref{prop:lyapounov2} states that the expected total population of the branching Markov chain $Z$ is finite, which in view of (c) of Proposition \ref{prop:loctimerec} shows that $\E[\tau_k] < \infty$ for all $k$ so the TBRW is positive recurrent. Conversely, when $\rho = 1 + 2\bar\nu$, the total population of $Z$ has infinite expectation hence $\E[\tau_k] = \infty$ for all $k$ so the TBRW is null recurrent in that case. Finally, when $\rho \leq 1+\bar\nu$, then the walk is transient thanks to Proposition \ref{prop:coupling_nurho} by comparison with a $(\rho', \nu)$-TBRW with $\rho' \in (1+\bar\nu, 1+2\bar\nu)$. 

The proof of Theorem \eqref{maintheo1} will be complete once we lift assumption \eqref{eq:nubar_fini} that $\nu$ has finite expectation. Thus, we now assume that $\bar\nu = \infty$ and show that the TBRW is transient: by truncation, we can find another distribution $\nu'$ such that $\nu' \prec \nu$ that also satisfies $\rho < 1 + 2\bar\nu' < \infty$ hence we know that the  $(\rho, \nu')$-TBRW is transient. Combining Proposition \ref{prop:coupling_nurho} and Corollary \ref{cor:rec01}, we conclude that the  $(\rho, \nu)$-TBRW is also transient.
\end{proof}

\section{Asymptotic behavior of the TBRW\label{sec:sec4}}
 We prove Theorem \ref{theo:LLN_TCL} in section \ref{sec:theo_lln_tcl} and Theorem \ref{theo:rec_log} in section \ref{subsec:theo_rec_log}.
\subsection{Cut times for the TBRW\label{subsec:cutimes}}

The proof of the law of large numbers and the central limit theorem stated in Theorem \ref{theo:LLN_TCL} is based on the classical path decomposition of the walk into excursions between cut times (also called regeneration times). The method is rather generic and was popularized to prove ballistic behavior for random walks in random environments (see for instance \cite{Piau,Sznitman00, SznitmanZerner99,  Zeitouni02}) and then subsequently applied successfully to other models such as the excited random walk \cite{BasdevantSingh, BerardRamirez07} and, more recently, for the original model of the TBRW (without bias) \cite{ribeiroTBRW}. We quickly recall here the definition of the cut times and how the finiteness of the first/second moment relates to the existence of a LLN/CLT for the TBRW.

Recall that $(S_n, \T_n)$ denote the $(\rho, \nu)$-TBRW and that $|S_n|$ is the height of the walk at time $n$ (\emph{i.e.} its distance from the root). The first cut time $C_1$ is defined as
$$
C_1 \defeq \inf\left\{n > 0\, : \, S_n \notin \{S_0,\ldots, S_{n-1} \}\hbox{ and } |S_k| \geq |S_n| \hbox{ for all $k\geq n$}\right\}.
$$
In words, $C_1$ (if it exists) is the first time that the walk visits a new vertex $v$ and then never leaves the subtree rooted at $v$. We point out that $C_1$ is not a stopping time because its definition depends on the future of the walk, but it can be decomposed into a stopping time (the walk visits a new vertex $v$) and an event that does not depend on the past of the walk (the walk never goes back to the father of $v$ afterward). Moreover, when $S$ is transient, the latter event has positive probability which does not depend on $v$ and equals 
\begin{equation}\label{def:alpha}
\alpha \defeq \P(\tau_1 = \infty) \in (0,1).
\end{equation}
Our interest in cut-times comes from the following standard result. 
\begin{proposition}\label{prop:cuttimes}
    Suppose that the first cut time $C_1$ for the TBRW $S$ is well-defined and has finite expectation $\E[C_1] < \infty$. Then, there exists $v > 0$ such 
    $$
    \frac{S_n}{n} \overset{\hbox{\tiny{a.s.}}}{\underset{n\to\infty}{\longrightarrow}} v.
    $$
    Furthermore, if $\E[C_1^2] < \infty$, then there exists $\sigma > 0$ such that
    $$
    \frac{|S_n| - nv}{\sqrt{n}} \overset{\hbox{\tiny{law}}}{\underset{n\to\infty}{\longrightarrow}} \mathcal{N}(0,\sigma^2). 
    $$
\end{proposition}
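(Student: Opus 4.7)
The approach is to define an increasing sequence of cut times $C_1<C_2<\ldots$ extending the definition of $C_1$, and to show that the walk regenerates at each $C_k$ in such a way that the increments $(C_{k+1}-C_k,\,|S_{C_{k+1}}|-|S_{C_k}|)_{k\geq 0}$ (with $C_0:=0$) form an i.i.d. sequence distributed as $(C_1,|S_{C_1}|)$. Once this renewal structure is established, both the LLN and the CLT follow from standard arguments.

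\emph{Step 1 (cut times and regeneration).} For $k\geq 0$, let $C_{k+1}$ be the first $n>C_k$ such that $S_n$ is a vertex visited for the first time at time $n$ and $|S_m|\geq |S_n|$ for all $m\geq n$. The assumption $\E[C_1]<\infty$ forces $C_1<\infty$ a.s., so Corollary~\ref{cor:01law} ensures that the walk is transient, and applying the same reasoning inside each subtree shows all $C_k$ are finite a.s. The structural fact driving regeneration is that at time $C_k$, the vertex $v_k:=S_{C_k}$ has just been visited for the first time (in particular it has no children in $\T_{C_k}$), and the trajectory $(S_n)_{n\geq C_k}$ remains forever in the subtree rooted at $v_k$. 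The dynamics within this subtree are entirely governed by the independent urns $U_v$ attached to descendants of $v_k$ (Proposition~\ref{prop:constructurn}), so $(S_{C_k+n},\,\T_{C_k+n}\cap\mathrm{subtree}(v_k))_{n\geq 0}$ has the law of a fresh $(\rho,\nu)$-TBRW rooted at $v_k$ and is independent of $\mathcal{F}_{C_k}$. This is the standard regeneration argument, carried out in detail for $\rho=1$ in \cite{ribeiroTBRW} and extending verbatim here since the local dynamics at each vertex are identical. The increments are thus i.i.d. copies of $(C_1,|S_{C_1}|)$.

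\emph{Step 2 (LLN).} By the classical SLLN, $C_k/k\to \E[C_1]$ and $|S_{C_k}|/k\to \E[|S_{C_1}|]$ a.s. Since $S_{C_1}$ is a non-root vertex, $|S_{C_1}|\geq 1$ a.s., hence $v:=\E[|S_{C_1}|]/\E[C_1]\geq 1/\E[C_1]>0$. For $n\in[C_k,C_{k+1})$, the walk lies in the subtree of $v_k$ so $|S_{C_k}|\leq |S_n|\leq |S_{C_k}|+(C_{k+1}-C_k)$ (heights change by at most $1$ per step); combined with $(C_{k+1}-C_k)/C_k\to 0$ a.s. (an immediate consequence of $C_k/k\to \E[C_1]\in(0,\infty)$), this gives $|S_n|/n\to v$ a.s.

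\emph{Step 3 (CLT).} Under $\E[C_1^2]<\infty$ (which yields $\E[|S_{C_1}|^2]\leq \E[C_1^2]<\infty$ since $|S_{C_1}|\leq C_1$), set $W_k:=(|S_{C_k}|-|S_{C_{k-1}}|)-v(C_k-C_{k-1})$. The $W_k$ are i.i.d., centered, and square-integrable, with variance $\sigma_0^2>0$ (the pair $(C_1,|S_{C_1}|)$ is manifestly not supported on the line of slope $v$ through the origin, since the walk admits first-cut excursions of many different shapes). With $N(n):=\max\{k:C_k\leq n\}$ satisfying $N(n)/n\to 1/\E[C_1]$ a.s., Anscombe's theorem yields
\[
\frac{|S_{C_{N(n)}}|-vC_{N(n)}}{\sqrt n}\;\Rightarrow\;\mathcal{N}(0,\sigma^2),\qquad \sigma^2:=\frac{\sigma_0^2}{\E[C_1]}.
\]
The remainders $|S_n|-|S_{C_{N(n)}}|$ and $v(n-C_{N(n)})$ are each bounded by $C_{N(n)+1}-C_{N(n)}$, and the elementary estimate $\max_{k\leq K}(C_{k+1}-C_k)=o_\P(\sqrt K)$ for i.i.d. variables with finite second moment shows these remainders are $o_\P(\sqrt n)$, giving $(|S_n|-vn)/\sqrt n\Rightarrow \mathcal{N}(0,\sigma^2)$.

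The main obstacle is Step~1: the $C_k$ are not stopping times, as their definition involves the entire future trajectory, so the i.i.d. decomposition cannot be obtained by a direct application of the strong Markov property. The standard workaround is to define cut-time candidates as successive first visits of previously unseen vertices (which \emph{are} stopping times), to observe that each candidate is promoted to a genuine cut time with the same conditional probability $\alpha=\P(\tau_1=\infty)$ from \eqref{def:alpha} independently of the past, and then to patch together the resulting pieces to recover the regeneration. This decomposition is the technical heart of the argument, but it is by now classical for walks on trees with local interactions and carries over from \cite{ribeiroTBRW} without essential modification.
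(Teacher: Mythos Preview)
Your approach is the same as the paper's (regeneration at cut times, then classical LLN/CLT for i.i.d.\ sequences with a random time change), and your write-up is in fact more detailed than the paper's brief sketch. There is, however, one genuine slip in Step~1 that propagates into your formula for $v$.

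You claim that $(S_{C_k+n},\,\T_{C_k+n}\cap\mathrm{subtree}(v_k))_{n\geq 0}$ has the law of a \emph{fresh} $(\rho,\nu)$-TBRW and hence that the increments $(C_{k+1}-C_k,\,|S_{C_{k+1}}|-|S_{C_k}|)_{k\geq 0}$ are i.i.d.\ copies of the \emph{unconditional} pair $(C_1,|S_{C_1}|)$. This is not correct: the very definition of $C_k$ already imposes that the walk never exits the subtree of $v_k$, so the post-$C_k$ process is a fresh TBRW \emph{conditioned on} $\{\tau_1=\infty\}$. Consequently the increments for $k\geq 1$ are i.i.d.\ with the law of $(C_1,|S_{C_1}|)$ under $\P(\,\cdot\mid\tau_1=\infty)$, while the first block $(C_1,|S_{C_1}|)$ keeps the unconditional law and is in general distributed differently from the others. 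This is exactly the subtlety emphasised in the Sznitman--Zerner framework and reproduced in the paper's proof. The correct speed is therefore
\[
v \;=\; \frac{\E\big[\,|S_{C_1}|\,\big|\,\tau_1=\infty\big]}{\E\big[\,C_1\,\big|\,\tau_1=\infty\big]},
\]
not $\E[|S_{C_1}|]/\E[C_1]$. The structure of your Steps~2 and~3 survives unchanged once you replace the unconditional law by the conditioned one (the first block contributes a single term with finite moments and is asymptotically irrelevant), and your last paragraph already identifies the mechanism --- successive candidate times and the uniform probability $\alpha$ of promotion --- that makes the conditioned regeneration rigorous.
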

\begin{proof}
The proof is classical. Under the assumption that $C_1$ exists and because the event $\{ \tau_1 = \infty\}$ has positive probability $\alpha$, there exists a.s. an infinite number of cut times $C_1 < C_2 < \ldots$. Furthermore, by definition of the cut times, the sequences $(C_{i+1} - C_{i})_{i\geq 1}$  and $(|S_{C_{i+1}}| - |S_{C_{i}}|)_{i\geq 1}$ are i.i.d. and have the same distribution, respectively, as  $C_1$ and $S_{C_1}$ conditioned on the event $\{ \tau_1 = \infty\}$. Then, the LLN/CLT for the walk now follows from the LLN/CLT applied to classical i.i.d. sequences of random variables that have a first/second moment together with a change of time $n \leftrightarrow C_n \sim n\E[C_1]$. We leave out the details and we refer the reader to \cite{Sznitman00} and \cite{SznitmanZerner99} for additional explanations.  
\end{proof}

\subsection{Proof of Theorem \ref{theo:LLN_TCL}\label{sec:theo_lln_tcl}}

In all this section, we assume that the $(\rho, \nu)$-TBRW is transient \emph{i.e.} 
$$\rho < 1 + 2\bar\nu.$$ 
We also do not require that $\bar\nu < \infty$ anymore so the proof given here will hold in full generality.  In view of Proposition \ref{prop:cuttimes}, The theorem will be established once we show that the first cut time $C_1$ is well-defined and has a finite second moment. We will prove the stronger result:
\begin{proposition}\label{prop:integreC1}
The first cut time $C_1$ is finite a.s. and, for all $n$ large enough, we have
$$
\P(C_1 > n) \leq e^{-n^{1/6 + o(1)}}. 
$$
In particular, $C_1$ has moments of all orders.
\end{proposition}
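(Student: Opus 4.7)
My approach combines a regenerative decomposition at fresh vertices with a quantitative control on how quickly the walk discovers new vertices.

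First, I would show that $C_1 < \infty$ almost surely. By the transience established in Theorem~\ref{maintheo1}, the walk visits infinitely many distinct vertices; order them $o = v_1, v_2, \ldots$ by first visit with corresponding times $0 = T_1 < T_2 < \cdots$. For $k \geq 2$, the vertex $v_k$ is a fresh leaf at time $T_k$, and by Proposition~\ref{prop:constructurn} the walk's future behavior restricted to $v_k$ and its descendants evolves as an independent copy of the TBRW started from the root. In particular, the escape event
\[
E_k \;:=\; \{\text{the walk never crosses the edge } v_k \to \pere{v_k} \text{ after time } T_k\}
\]
satisfies $\P(E_k \mid \mathcal{F}_{T_k}) = \alpha$, since it is equivalent to ball $0$ never being drawn from the fresh urn $U_{v_k}$. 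A standard filtration-based geometric argument (conditioning successively on $\mathcal{F}_{T_2}, \mathcal{F}_{T_3}, \ldots$) then yields that $K := \min\{k \geq 2 : E_k\text{ holds}\}$ is almost surely finite with $\P(K > m) \leq (1-\alpha)^{m-1}$, and moreover $C_1 \leq T_K$.

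Second, for any integer $m \geq 2$,
\[
\P(C_1 > n) \;\leq\; \P(K > m) + \P(T_m > n) \;\leq\; (1-\alpha)^{m-1} + \P(T_m > n).
\]
The event $\{T_m > n\}$ means that by time $n$ the walk has visited fewer than $m$ distinct vertices; by pigeonhole some vertex then has local time at least $\lceil n/m \rceil$. Since at most $n+1$ distinct vertices can be visited by time $n$,
\[
\P(T_m > n) \;\leq\; (n+1)\cdot \sup_v \P\!\left(L^{(v)} \geq \lceil n/m \rceil\right),
\]
where $L^{(v)}$ denotes the total number of visits made to $v$ by the walk. The task then reduces to establishing a uniform-in-$v$ tail bound of the form $\sup_v \P(L^{(v)} \geq k) \leq C \exp(-c k^\beta)$ for some $\beta > 0$: heuristically, each visit to $v$ starts a sub-TBRW-like excursion which, with probability at least $\alpha$, never again brings the walk back to $v$ (by escape from the subtree rooted at $v$), making the number of visits to $v$ geometric-like; combined with a bound on the time spent per excursion this yields a stretched-exponential tail. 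Optimizing the choice of $m$ to balance the two terms then gives $\P(C_1 > n) \leq \exp(-n^{1/6+o(1)})$, which is sufficient for finite moments of all orders.

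The main technical difficulty I anticipate is the uniform-in-$v$ bound on $L^{(v)}$: unlike in a fixed-graph random walk, the urn $U_v$ evolves as the walk returns repeatedly to $v$, so one must argue that the escape probability $\alpha$ remains a valid uniform lower bound regardless of the urn's state. This can plausibly be handled through the monotonicity coupling of Proposition~\ref{prop:coupling_nurho} applied locally (comparing the current urn to a fresh one) or through a direct analysis of the urn dynamics. Pinning down the exponent $\beta$ precisely and propagating it through the optimization over $m$ is what yields the final exponent $1/6$ in the tail estimate.
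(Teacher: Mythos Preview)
Your overall decomposition $\P(C_1>n)\le\P(K>m)+\P(T_m>n)$ is sound and is essentially what the paper does (your $K$ is the paper's $R_{C_1}$, and $\{T_m>n\}=\{R_n<m\}$). However, the bound $\P(K>m)\le(1-\alpha)^{m-1}$ is \emph{false}, and the ``standard filtration-based geometric argument'' you invoke does not apply. Although $\P(E_k\mid\mathcal F_{T_k})=\alpha$ is correct, the events $E_2^c,\ldots,E_{k-1}^c$ are not $\mathcal F_{T_k}$-measurable, and they are strongly positively correlated with $E_k^c$: whenever $v_k$ lies in the subtree rooted at $v_j$ (which is the generic situation as the walk pushes deeper before backtracking), one has $E_j^c\Rightarrow E_k^c$ deterministically, since to exit the subtree at $v_j$ the walk must first exit the one at $v_k$. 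Thus a \emph{single} failed excursion that discovers $r$ new vertices forces $r$ consecutive failures $E_j^c$, and $\P(K>m)$ is controlled by the tail of the conditioned range $R_{\tau_1}$, not by a product of $m-1$ independent Bernoulli factors.

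The paper repairs this by indexing the independent trials differently: rather than all fresh vertices, it records only the first fresh vertex reached \emph{after} each failed attempt has concluded. With $A_i$ the first new vertex after $B_{i-1}$ and $B_i$ the first time after $A_i$ that the walk drops below $|S_{A_i}|$, the excursions on $[A_i,B_i)$ are genuinely i.i.d.\ copies of the walk up to $\tau_1$, so $\theta=\inf\{i:B_i=\infty\}$ \emph{is} geometric$(\alpha)$, and $R_{C_1}-1=\sum_{i<\theta}(V_i+1)$ with the $V_i$ i.i.d.\ copies of $R_{\tau_1}\mid\{\tau_1<\infty\}$. One must then separately bound the tail of each $V_i$ (Lemma~\ref{lem:tau1condi}, via depth and width of a conditioned excursion), which gives only $\P(K>m)\le e^{-m^{1/3+o(1)}}$; combined with the range growth $\P(R_n\le\delta n^{1/2})\le e^{-n^{1/2+o(1)}}$ of Lemma~\ref{lem:badrnbis} and the choice $m\asymp n^{1/2}$, this produces the exponent $1/6$. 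The paper's proof of Lemma~\ref{lem:badrnbis} also bypasses your uniform-in-$v$ local-time bound by a direct pigeonhole on freshly created neighbours, avoiding the difficulty you flag concerning non-fresh urns.
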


The proposition relies on two technical estimates whose proofs are specific to the model of the TBRW (Lemma \ref{lem:tau1condi} and Lemma \ref{lem:badrnbis}) while the main argument to control the tail of $C_1$ using a decomposition of the path into ``failed excursions" until the first cut time is standard. We first show that the walk must visit new vertices often. Let 
$$R_n \defeq \#\{S_i,\, 0\leq i \leq n\}$$
denote the range (\emph{i.e.} number of distinct vertices visited) of $S$ up to time $n$. We point out that this quantity is usually strictly smaller than the number of vertices of $\T_n$.

\begin{lemma}\label{lem:badrnbis}
There exists $\delta>0$ such that for all $n\geq 1$:
\[
\P(R_n\leq \delta n^{1/2})\leq e^{-n^{1/2}+o(1)}.
\]
\end{lemma}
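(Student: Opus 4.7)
The plan is to exploit the i.i.d.\ urn representation from Proposition~\ref{prop:constructurn} in order to reduce the range estimate to a concentration statement on a single urn. For each $v\in\Tfull$, let $D_m^{(v)}$ denote the number of distinct non-zero colors drawn in the first $m$ draws of $U_v$. Since the $i$-th draw of $U_v$ encodes the walker's $i$-th departure from $v$, $D_{N_v(n)}^{(v)}$ equals the number of children of $v$ visited by the walker by time $n$, where $N_v(n)$ is the local time at $v$. As every visited vertex other than $o$ is the child of another visited vertex, we obtain
\[
R_n - 1 \;=\; \sum_{v\in V_n} D_{N_v(n)}^{(v)}, \qquad \sum_{v\in V_n} N_v(n) \;=\; n + 1,
\]
where $V_n$ denotes the set of visited vertices. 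Fix $\delta>0$ small, set $k=\lfloor \delta n^{1/2}\rfloor$ and $m=\lfloor (n+1)/k\rfloor\sim n^{1/2}/\delta$. A pigeonhole argument on $\{R_n\le k\}$ produces some $v^*\in V_n$ with $N_{v^*}(n)\ge m$, and the first identity then forces $D^{(v^*)}_{N_{v^*}}\le R_n-1<k$, hence $D^{(v^*)}_m<k$ by monotonicity of $m\mapsto D^{(v)}_m$.

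The key observation is that $U_v$ is first used by the walker only upon its first visit to $v$, so the event $\{v\in V_n\}$ depends only on the urns $\{U_w\}_{w\neq v}$. Since the urns are i.i.d., this yields $\{v\in V_n\}\perp U_v$, and in particular $\{v\in V_n\}$ is independent of $\{D_m^{(v)}<k\}$, whose probability coincides with $\P(D_m<k)$ for the generic urn of Section~\ref{subsec:urnprocess}. Combining the union bound with this independence,
\[
\P(R_n\le k) \;\le\; \E\!\left[\sum_{v\in\Tfull}\ind{v\in V_n}\ind{D_m^{(v)}<k}\right] \;=\; \P(D_m<k)\cdot \E[R_n] \;\le\; (n+1)\,\P(D_m<k).
\]

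It remains to show $\P(D_m<k)\le e^{-c m}$ for some $c>0$ (depending on $\rho,\nu,\delta$) when $\delta$ is chosen small enough that $k/m\sim \delta^2$ is small. Writing $C_i=\xi_1+\cdots+\xi_i$ for the total non-zero balls added by step $i$, the conditional probability of drawing a fresh non-zero color at step $i$ is $(C_i-D_{i-1})/(C_i+\rho)$, which on $\{D_{i-1}<k\}$ is at least $1/2$ whenever $C_i\ge 2k+\rho$. Since $\xi_j\ge\indset{\xi_j\ge 1}$ with $\P(\xi_j\ge 1)=1-\nu(0)>0$, a Chernoff bound on a binomial sum gives $\P(C_{i_0}<2k+\rho)\le e^{-c_1 k}$ for some $i_0$ proportional to $k$. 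Outside this low-probability event and on $\{D_m<k\}$, the conditional probability of a fresh draw at each step $i\in[i_0,m]$ is $\ge 1/2$, so Azuma--Hoeffding applied to the bounded-increment martingale $\sum_{i=i_0}^m(\ind{\text{fresh at } i}-p_i)$ yields the required estimate: the event $\{D_m<k\}$ forces $\sum \ind{\text{fresh at } i}<k$, while the conditional expectation $\sum p_i\ge (m-i_0)/2\gg k$ for $\delta$ small, producing a macroscopic deviation of order $m$.

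Combining the estimates, $\P(R_n\le \delta n^{1/2})\le (n+1)e^{-cm}=e^{-n^{1/2+o(1)}}$, as desired. The main obstacle is the independence observation $\{v\in V_n\}\perp U_v$: it lets the union bound over the random set $V_n$ factor cleanly through the infinite sum $\sum_{v\in\Tfull}\P(v\in V_n)=\E[R_n]$, reducing a walk-dependent statement to a purely local urn concentration estimate valid uniformly in $\nu$ with $\nu(0)<1$ (including when $\bar\nu=\infty$).
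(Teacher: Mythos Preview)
Your argument is correct and runs parallel to the paper's: both locate, via pigeonhole, a vertex with local time $\gtrsim n^{1/2}$, and then show through a two-stage concentration (first that many balls/children are created, then that many distinct ones are drawn/visited) that this single vertex already contributes more than $\delta n^{1/2}$ to the range. The execution differs in how the union bound over the random heavily-visited vertex is organised. The paper argues directly on the walk, taking a union bound over the index $j\le n$ of visited vertices and splitting the $\delta^{-1}n^{1/2}$ visits into two temporal halves. You instead route everything through the i.i.d.\ urn representation of Proposition~\ref{prop:constructurn}, using the observation that $\{v\in V_n\}=\{T_v\le n\}$ is $\sigma\big((U_w)_{w\ne v}\big)$-measurable and hence independent of $U_v$; this factors the union bound cleanly as $\E[R_n]\cdot\P(D_m<k)\le (n+1)\,\P(D_m<k)$, reducing the problem to a concentration estimate on a single generic urn. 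Your formulation is a bit more transparent about where the independence is used (the paper uses it implicitly when treating the number of neighbours created at $v_j$ as a fresh binomial), and both proofs require only $\nu(0)<1$, so they apply equally when $\bar\nu=\infty$.
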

\begin{proof}
Let $\delta >0$, and let $v_i$ be the $i$-th vertex visited by $S$. Note that $\{R_n\leq \delta n^{1/2}\}\subseteq \{\exists j\leq n, v_j\text{ is visited at least $\delta^{-1}n^{1/2}$ times by $(S_k)_{1\leq k\leq n}$}\}$. The number $k$ of neighbors created during the first $\lfloor \delta^{-1}n^{1/2}/2\rfloor$ visits to any given vertex stochastically dominates a binomial  $\emph{Bin}(\lfloor \delta^{-1}n^{1/2}/2\rfloor,1-\nu(0))$ variable. Hence, choosing $\delta <(1-\nu(0))^2/4$, by Azuma's inequality (or standard large deviation bounds) and a union bound on $j\leq n$,  
\[
\P(\exists j\leq n, \text{$v_j$ is visited $\lfloor \delta^{-1}n^{1/2}/2\rfloor$ times, creating $\leq \delta^{-1/2}n^{1/2}$ neighbors})\leq e^{-n^{1/2}+o(1)}. 
\]
But if a vertex has already at least $ \delta^{-1/2}n^{1/2}$ neighbors, for each subsequent visit of $S$, the probability to exit via a not yet visited vertex, if the range of the walk so far is $\leq \delta n^{1/2}$, is therefore at least $1-\delta n^{1/2} /(\delta^{-1/2}n^{1/2}) \geq 1-\delta^{3/2} \geq 3/4$. Thus, letting $Y\sim \emph{Bin}(\lfloor \delta^{-1} n^{1/2}/2\rfloor , 3/4)$, 
\begin{align*}
&\P(R_n\leq \delta n^{1/2}) \leq \P\Big(\substack{\hbox{$\exists j\leq n$, $v_j$ is visited $\delta^{-1/2} n^{1/2}$ times,}\\\hbox{$S$ leaving $v_j$ via $\leq \delta n^{1/2}$ distinct neighbors}} \Big)
\\
&\leq e^{-n^{1/2}+o(1)}+n\P(Y\leq \delta n^{1/2})\leq e^{-n^{1/2}+o(1)}, 
\end{align*}
where the last inequality follows again, for $\delta$ small enough, from large deviation bounds on binomial variables.  
\end{proof}

We also need a control for the range of the walk at time $\tau_1$ when it first crosses the root loop, conditionally on this return time being finite. 

\begin{lemma}\label{lem:tau1condi} 
For all $n$ large enough, we have
$$
\P(R_{\tau_1} > n , \tau_1 < \infty) \leq e^{-n^{1/2 + o(1)}}.
$$
\end{lemma}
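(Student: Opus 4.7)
My plan relies on the first cut time $C_1 > 0$ defined just before Proposition~\ref{prop:cuttimes}, which is a.s.\ finite in the transient regime. The first key observation is that $\{\tau_1 < \infty\} \subseteq \{\tau_1 < C_1\}$: on $\{\tau_1 < \infty\}$ the walk returns to the root by time $\tau_1$, so for every non-root vertex $v$ first visited at some time $\sigma_v < \tau_1$, the walk must have re-entered $\pere{v}$ during $(\sigma_v, \tau_1]$ (because on a tree the unique path from $v$ to $o$ goes through $\pere{v}$). Hence no such $v$ can be a cut point, i.e.\ $C_1 > \tau_1$. In particular, on this event $R_{\tau_1} \leq R_{C_1}$, so it suffices to bound $\P(R_{C_1} > n)$.

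Let $\sigma_1 < \sigma_2 < \cdots$ be the successive first-visit times to new non-root vertices, and set $v_j = S_{\sigma_j}$. Since $v_j$ has no children at time $\sigma_j$, the strong Markov property combined with the urn construction of Proposition~\ref{prop:constructurn} shows that the behaviour of the walk inside the subtree rooted at $v_j$ (together with the single edge to $\pere{v_j}$) is distributed as a fresh TBRW rooted at $v_j$, in which the event ``$v_j$ is a cut'' corresponds to $\{\tau_1 = \infty\}$. Hence
\[
\P(v_j \text{ is a cut} \mid \mathcal{F}_{\sigma_j}) = \alpha \quad \text{on } \{\sigma_j < \infty\}.
\]
I then claim the geometric tail bound $\P(R_{C_1} > n) \leq (1-\alpha)^n$. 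Granted this and combined with the previous step, it yields
\[
\P(R_{\tau_1} > n, \tau_1 < \infty) \leq (1-\alpha)^n,
\]
an \emph{exponential} bound, strictly stronger than the stretched-exponential $e^{-n^{1/2+o(1)}}$ claimed in the statement.

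The main obstacle is justifying the geometric bound $\P(R_{C_1} > n) \leq (1-\alpha)^n$. The cut events $\{v_j \text{ is a cut}\}$ are not manifestly independent, since each concerns the walk's entire future after a different stopping time $\sigma_j$ and these futures overlap: if the walk reaches $v_{j+1}$ before first returning to $\pere{v_j}$, then the fates of $v_j$ and $v_{j+1}$ are intertwined through the common continuation of the trajectory. To overcome this I introduce the auxiliary stopping times $\theta_j := \inf\{k > \sigma_j : S_k = \pere{v_j}\}$, so that $\{v_j \text{ is a cut}\} = \{\theta_j = \infty\}$, and sequence the Bernoulli$(\alpha)$ trials along the $\theta_j$'s rather than along the $\sigma_j$'s; at each new-vertex visit, conditionally on $\mathcal{F}_{\sigma_j}$, there is an independent probability $\alpha$ of escape, and iterating the strong Markov property at the successive $\theta_j$'s (tracking which among the previously visited vertices have already had their parent re-visited) produces the geometric decay.
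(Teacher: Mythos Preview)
Your reduction $\{\tau_1<\infty\}\subseteq\{\tau_1<C_1\}$, hence $R_{\tau_1}\le R_{C_1}$ on that event, is correct. The problem is the claimed bound $\P(R_{C_1}>n)\le(1-\alpha)^n$, which is neither proved nor true by your argument. The events $E_j^c=\{v_j\text{ is not a cut}\}$ are \emph{not} close to independent: if $v_{j'}$ is a descendant of $v_j$ visited during the same excursion, then $E_j^c\subseteq E_{j'}^c$ (once the walk returns to $\pere{v_j}$ it has already passed through $\pere{v_{j'}}$). So a single failed excursion visiting $m$ vertices yields $m$ simultaneous ``not cut'' events at the cost of one factor $(1-\alpha)$. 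The identity $\P(E_j\mid\mathcal F_{\sigma_j})=\alpha$ cannot be chained, because $E_1^c,\dots,E_{j-1}^c$ are \emph{not} $\mathcal F_{\sigma_j}$-measurable: they concern returns $\theta_i$ that may happen after $\sigma_j$. Your attempted fix via the $\theta_j$'s does not help, since the $\theta_j$'s are not totally ordered and by the time the first one is resolved many new vertices may have been visited.

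More fundamentally, the reduction is circular. Taking the event that the \emph{first} excursion already has large range gives
\[
\P(R_{C_1}>n)\ \ge\ \P(R_{\tau_1}>n,\ \tau_1<\infty),
\]
so bounding the left-hand side is at least as hard as the lemma you are trying to prove; and in the paper's logic it is Lemma~\ref{lem:tau1condi} that is used to control $R_{C_1}$ (in the proof of Proposition~\ref{prop:integreC1}), not the other way around. The paper's own proof avoids this by bounding $R_{\tau_1}$ directly: it controls separately the \emph{depth} of the excursion (each backward crossing along the ancestor line has a uniformly positive chance to branch off and escape, giving an exponential bound in the height reached) and its \emph{width} at each level (each first visit to a new vertex at a given height has probability $\alpha$ of never returning to the parent, giving an exponential bound in the number of vertices per level), and then balances the two with $k\sim\sqrt{n}$.
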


\begin{proof}
We bound the range $R_{\tau_1}$ by bounding separately the depth and  the width of the subtree of vertices visited by the walk up to time $\tau_1$. We start by looking at the depth of this subtree: fix $k \geq 1$ and consider the stopping time $h_k \defeq \inf(n \geq 0 : |S_n| = k)$. Define the event $\mathcal{E}_k \defeq \{h_k < \tau_1 < \infty\}$. On $\mathcal{E}_k$, the walk reaches height $k$ at vertex $v_k \defeq S_{h_k}$. Let $o = v_0 , v_1, \ldots , v_k$ be the vertices along the ancestor line from the root $o$ to $v_k$. After time $h_k$ and because $\tau_1$ is finite on $\mathcal{E}_k$, the walk must, at some time, jump from $v_k$ to $v_{k-1}$, then later on it must jump from $v_{k-1}$ to $v_{k-2}$ and so on.

Now, for each $i$, when the walk goes back to $v_i$ for the first time, it has probability $1-\nu(0)$ to create at least one new leaf. Furthermore, the probability that a given leaf is visited before crossing the edge from $v_i$ to $v_{i-1}$ is exactly $\frac{1}{1+\rho}$. Finally, when the leaf is  visited for the first time, because the walk is transient, it has probability $\alpha \defeq \P(\tau_1 = \infty) > 0$ to never return to $v_i$. Putting all this together, we conclude that the probability that the walk starting from $v_k$ crosses back the root loop at some point is bounded above by the product of the probability of crossing each edge $(v_i , v_{i-1})$ successively, and therefore
\begin{equation*}
\P(\mathcal{E}_k) \leq \Big(1 - \frac{\alpha (1-\nu(0))}{1+\rho}\Big)^{k}.
\end{equation*}
We now look at the width of the excursion. Fix $\ell \geq 1$ and let $G_\ell \defeq \sharp \{ S_k : k \leq \tau_1 \hbox{ and } |S_k| = \ell\}$ denote the number of vertices visited by the walk until time $\tau_1$ which are at distance $\ell$ from the root. Fix $m \geq 1$ and consider $\mathcal{G}_{m, \ell} \defeq \{ G_\ell \geq m \hbox{ and } \tau_1 < \infty\}$. On this event, the walks visits $m$ distinct sites at height $\ell$ but, at each first visit of a site $v$ at height $\ell$, it has probability $\alpha$ never to go back to $\pere{v}$ - which is necessary to visit another site of the same height. Therefore, we find that
\begin{equation*}
\P(\mathcal{G}_{m,\ell}) \leq \alpha^{m}
\end{equation*}
Finally, we decompose
\begin{align*}
\P(R_{\tau_1} > n , \tau_1 < \infty) &\leq \P\big( h_k < \tau_1 < \infty) + \P(\exists \ell < k, G_\ell \geq n/k, \tau_1 < \infty)\\
&\leq \P\big( \mathcal{E}_k) + \sum_{\ell = 1}^k\P(\mathcal{G}_{\floor{n/k},\ell})\\
&\leq \Big(1 - \frac{\alpha (1-\nu(0))}{1+\rho}\Big)^{k} + k \alpha^{\floor{n/k}},
\end{align*}
and the result follows choosing $k=\lfloor\sqrt{n}\rfloor$ 
\end{proof}

\begin{proof}[Proof of Proposition \ref{prop:integreC1} (hence of Theorem \ref{theo:LLN_TCL})]
Now that we have obtained the critical estimates on $R_n$ and $R_{\tau_1}$, the proof of the proposition is \emph{mutatis mutandis} the same as that of Proposition 6.1 of \cite{BasdevantSingh} with Lemma 6.3 and 6.4 of \cite{BasdevantSingh} replaced respectively by Lemma \ref{lem:tau1condi} and \ref{lem:badrnbis}. We provide below the main arguments for the sake of completeness.  

Let $(\mathcal{F}_n)_{n\geq 0}$ denote the natural filtration for $S$. We define by induction two interlaced sequences of stopping times: $A_0=B_0=0$  and for $i\geq 1$,
$$
A_i:=\inf\{n >B_{i-1}: S_n\notin \{S_0, S_1, \ldots, S_{n-1} \} \quad ; \quad B_i:=\inf\{n>A_i, |S_n| < |S_{A_i}|\}.
$$
In other words $A_i$ is the first time after $B_{i-1}$ when the walk visits a new vertex, and $B_i$ is the first time after $A_i$ that it goes back to the father of $S_{A_i}$. Note that we have $B_i=\infty$ with positive probability since $(S_i)_{i\geq 0}$ is a.s. transient.

We know that the walk visits infinitely many sites a.s., so that we have the equality of events $\{ B_{k-1} < \infty\} = \{A_{k} < \infty\}$ a.s. Furthermore, because the walk evolves on a new subtree during each excursion $[A_k, B_k)$, it follows that, conditionally on $\{A_k < \infty\}$, the random variable $B_k - A_k$ is independent of $\mathcal{F}_{A_k}$ and distributed as $\tau_1$. In particular, we have $\P(B_{k} = \infty \,|\, A_k < \infty, \mathcal{F}_{A_k}) = \P(\tau_1 = \infty)$. Therefore, by induction, it follows that the r.v. $\theta  \defeq\inf\{i : B_i = \infty\}$ has geometric distribution with parameter $\alpha \defeq \P(\tau_1 = \infty) > 0$. This proves that $C_1 = A_{\theta}$ is finite a.s.

Now, we consider $R_{C_1}$ the number of site visited by the walk at time $C_1$. Observing that $S$ only visits new sites during the excursions $[A_i, B_i)$, we obtain the identity
$$
R_{C_1} = 1 + \sum_{1\leq i <\theta} (V_i+1)
$$
where $V_i$ is the number of distinct sites visited during the excursion $[A_i, B_i)$. Now, using again that those excursions occur on distinct subtrees, we deduce that, conditionally on $\theta$, the sequence $(V_1,\ldots , V_{\theta-1})$ is i.i.d. with $V_i$ having the same distribution as the r.v. $R_{\tau_1}$ conditioned on $\{\tau_1 < \infty\}$. Therefore, we get by a union bound:
\begin{align*}
\P\big(V_1 + \ldots + V_{\theta - 1} > n\big) & \leq  \P\big(\theta > \lfloor n^{1/3} \rfloor\big) + \P\big(V_1 + \ldots + V_{\theta - 1} > n, \; \theta \leq \lfloor n^{1/3} \rfloor \big) \\
 & \leq  \P\big(\theta > \lfloor n^{1/3} \rfloor\big) +  \lfloor n^{1/3} \rfloor \P\Bigg(R_{\tau_1} > \frac{n}{\lfloor n^{1/3} \rfloor} \, \Bigg|\, \tau_1 < \infty\Bigg)\\
& \leq (1-\alpha)^{\lfloor n^{1/3} \rfloor} + \frac{\lfloor n^{1/3}\rfloor}{1-\alpha} e^{- \Big(\frac{n}{\lfloor n^{1/3} \rfloor}\Big)^{1/2 + o(1)}}\\
& \leq e^{-n^{1/3 + o(1)}}
\end{align*}
where we used the Lemma \ref{lem:tau1condi} and the fact that $\theta$ has geometric distribution with parameter $\alpha$ to upper bound the probabilities on the second line. Finally, using Lemma \ref{lem:badrnbis}, we conclude that
\begin{align*}
\P\big(C_1 > n\big) & \leq \P\big(C_1 > n,\; R_n > \delta n^{1/2} \big) + \P(R_n \leq \delta n^{1/2} )\\
&\leq \P(R_{C_1} > \delta n^{1/2} ) + \P(R_n \leq \delta n^{1/2} )\\
&\leq e^{-\floor{\delta n^{1/2}-2}^{1/2 + o(1)}} + e^{-n^{1/2 + o(1)}}\\
&\leq e^{-n^{1/6 + o(1)}}.
\end{align*}
\end{proof}

\subsection{Proof of Theorem \ref{theo:rec_log}\label{subsec:theo_rec_log}}

We assume in this last section that the $(\rho,\nu)$-TBRW is positive recurrent, \emph{i.e.} 
$$1+2\bar\nu < \rho.$$
We will use the following lemma which shows that the return times of the walk through the root loop increase exponentially fast. 
\begin{lemma}\label{lemma:exptau}
    There exists $\delta= \delta(\rho,\nu) > 0$ such that
    $$
    \tau_k \geq e^{\delta k}\quad\hbox{for all $k$ large enough a.s.}
    $$
\end{lemma}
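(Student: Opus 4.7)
The plan is to bound $\tau_k$ from below by $V_k$, the number of times the walk $(S_n)$ visits the root up to time $\tau_k$, and then to show that $V_k$ itself grows at least exponentially in $k$ using only the dynamics at the root. Since each visit to $o$ costs one time step, $\tau_k \geq V_k$ trivially, so it suffices to prove that $V_k \geq e^{\delta k}$ a.s.\ for $k$ large, for some $\delta > 0$.

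The first step is to set up the dynamics at the root cleanly. I would use an equivalent reformulation of the TBRW in which, instead of a single i.i.d.\ sequence $(\xi_n)$, one uses an i.i.d.\ family $(\eta_j^v)_{j\geq 1,\, v\in \Tfull}$ of $\nu$-distributed variables, with $\eta_j^v$ the number of leaves added at the $j$-th visit to $v$; this has the same law as the original model but makes the dynamics at the root autonomous. Writing $\xi_j^o \defeq \eta_j^o$ and $\Xi_j \defeq \sum_{i=1}^j \xi_i^o$, at the $j$-th visit to $o$ the walk first adds $\xi_j^o$ new children and then, independently of everything else, crosses the root loop with probability $p_j \defeq \rho/(\rho + \Xi_j)$, or moves to a child of $o$ (from which it a.s.\ returns since the walk is positive recurrent). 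Letting $X_j \in \{0,1\}$ indicate whether a loop crossing occurs at visit $j$, we have $V_k = \inf\{v : \sum_{j=1}^v X_j = k\}$ and, crucially, conditionally on $(\xi_i^o)_{i\geq 1}$, the $X_j$ are independent Bernoullis of parameters $p_j$.

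Next, because $\bar\nu < \infty$ in the positive recurrent regime, the SLLN gives $\Xi_j / j \to \bar\nu$ a.s., so for any fixed $\epsilon \in (0,1)$ there a.s.\ exist a random $J_0 < \infty$ and a finite random constant $C_0$ such that $\sum_{j=1}^V p_j \leq C_0 + \tfrac{\rho}{(1-\epsilon)\bar\nu}\log V$ for all $V \geq 1$. Choosing $\delta \defeq (1-\epsilon)\bar\nu/(2\rho)$, this bound at $V = \lceil e^{\delta k}\rceil$ becomes $C_0 + k/2$. A conditional Chernoff bound on the sum of independent Bernoullis, with $s = \log 2$, then yields
\[
\P\Bigl(\textstyle\sum_{j=1}^{\lceil e^{\delta k}\rceil} X_j \geq k \,\Big|\, (\xi_i^o)_{i\geq 1}\Bigr) \;\leq\; \exp\bigl(C_0 - (\log 2 - \tfrac{1}{2})\, k\bigr),
\]
which is a.s.\ summable in $k$. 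A conditional Borel--Cantelli argument (working on the events $\{C_0 \leq M\}$ and letting $M\to\infty$) concludes that a.s., for all $k$ large enough, $\sum_{j\leq\lceil e^{\delta k}\rceil} X_j < k$, i.e.\ $V_k > e^{\delta k}$, whence $\tau_k \geq e^{\delta k}$.

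The main point of care is to set up the construction so that the $(X_j)$ are genuinely conditionally independent given $(\xi_i^o)$, and to deal with the random constant $C_0$ produced by the SLLN when invoking Borel--Cantelli; once these are handled, the Chernoff estimate at the root is completely elementary. Note that this approach exploits only the logarithmic growth of $\sum p_j$, which stems from the fact that the root's degree grows linearly in the number of visits: it is precisely this slow decay of the loop-crossing probability that forces an exponentially large number of visits $V_k$ between crossings.
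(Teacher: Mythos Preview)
Your proof is correct and takes a genuinely different route from the paper's. Both arguments rest on the same mechanism---the degree of the root grows linearly in the number of visits, so the loop-crossing probability at the $j$-th visit decays like $\rho/(\bar\nu j)$---but they exploit it differently. The paper works with the increments $d_{k+1}-d_k$ of the root degree between consecutive loop crossings, shows that $d_{k+1}\geq (1+\beta)d_k$ with probability bounded away from zero, and concludes via an SLLN comparison that $d_k$ grows exponentially; a separate Markov-inequality step then transfers this to $\tau_k$. You instead condition on the children-added sequence $(\xi_i^o)$ to make the loop-crossing indicators $X_j$ genuinely independent Bernoullis, bound $\sum_{j\leq V}p_j\lesssim (\rho/\bar\nu)\log V$ via the SLLN, and apply a single Chernoff bound to control $\P(\sum_{j\leq e^{\delta k}}X_j\geq k)$. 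Your argument is more streamlined---one concentration estimate instead of a two-stage growth-then-transfer argument---while the paper's version yields the stronger intermediate fact that the root degree $d_k$ itself grows exponentially in $k$. The one place to be careful in your write-up is the handling of the random constant $C_0$ coming from the SLLN; your suggestion of applying Borel--Cantelli conditionally on $(\xi_i^o)$ (where the bound $e^{C_0-(\log 2-1/2)k}$ is deterministic and a.s.\ summable) is the cleanest way to do this.
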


\begin{proof}
We crudely lower bound $\tau_k$ by the number of jumps from the root $o$ to a child of $o$ before jumping through the root loop for the $k$-th time. Define the filtration $\mathcal H_k \defeq \sigma((S_i,\T_i)_{i\leq \tau_k})$ and let $d_{k} \defeq \deg_o(\T_{\tau_k})$ denote the number of children of the root $o$ at time $\tau_k$. We also define $\Delta_k:= \sum_{i=\tau_k}^{\tau_{k+1}-1}\ind{S_i = o}$ the number of visits to the root vertex between two consecutive passages through the root loop. We observe that, conditionally on $\mathcal H_k$, the r.v. $\Delta_k$  stochastically dominates a geometric random variable with parameter $\rho/(d_k+\rho)$.  Moreover, every time we are at $o$ between $\tau_k$ and $\tau_{k+1}$, we have a positive probability $\beta \defeq 1 - \nu(0) >0$ of creating at least $1$ child. Thus, taking $d_k$ large, we find that 
\begin{align*} \P(d_{k+1}-d_k\leq \beta d_k \;|\;\mathcal H_k) & \leq 
\P\left (\Delta_k<2 d_k  \;|\;\mathcal H_k \right )+
\P \left (d_{k+1}-d_k\leq \beta d_k ,\; \Delta_k\geq 2  d_k \;|\; \mathcal H_k \right )\\
& \leq \P\Big(\hbox{Geom}\Big(\frac{\rho}{\rho+d_k}\Big) < 2 d_k\;\Big|\; \mathcal H_k \Big) + \P\Big( \hbox{Binom}(2 d_k, \beta) \leq \beta d_k\;\Big|\; \mathcal H_k\Big)\\
&\underset{d_k\to\infty}{\longrightarrow} 1 - e^{-2\rho} < 1.  
\end{align*}
Therefore, for any $p_0 < e^{-2\rho}$, by comparison with a sequence of i.i.d. variables with $\text{Ber}(p_0)$ distribution, the strong law of large numbers implies that 
$$
\liminf_n \frac{1}{n}\sum_{k\leq n} \ind{d_{k+1}-d_k > \beta d_k}  \geq p_0 \quad\hbox{a.s.}
$$
This in turn implies the existence of $\delta > 0$ such that a.s., $d_k \geq e^{\delta k}$ for all $k$ large enough. Finally, denoting $(\xi_n)_{ n\geq 0}$ the i.i.d. sequence with distribution $\nu$ where $\xi_n$ is the number of children added to the root at its $n$-th visit, we observe that
\begin{multline*}
\P\big( \tau_k \leq e^{\delta k / 3}  , d_k \geq e^{\delta k} \Big) 
\leq \P\Big(\xi_1 + \ldots \xi_{\floor{e^{\delta k / 3}}} \geq e^{\delta k}\Big) \\
\leq \floor{e^{\delta k / 3}} \P\Big(\xi_i \geq \frac{e^{\delta k}}{\floor{e^{\delta k / 3}}}\Big)
\leq \frac{\bar \nu \floor{e^{\delta k / 3}}^2}{e^{\delta k}}  =  O(e^{-\delta k/3}),
\end{multline*}
using Markov's inequality. 
This proves that the events $\{\tau_k \leq e^{\delta k / 3}  , d_k \geq e^{\delta k}\}$ occur only finitely many times a.s. Hence, we conclude that we also have $\tau_k \geq e^{\delta k /3}$ for all $k$ large enough a.s. 
\end{proof}

\begin{proof}[Proof of Theorem \ref{theo:rec_log}]
We first establish the upper bound. We assume here that we are in the positive recurrent setting $\rho > 1 + 2\bar\nu$. Fix $k\geq 1$ and consider $f(i) \defeq \frac{\rho^k}{\rho^i}$. Corollary \ref{cor:vecpropre} states that $f$ is a left eigenvector associated with the eigenvalue $\lambda \defeq \frac{\bar\nu}{\rho-\bar\nu - 1} \in (0,1)$ for the expectation matrix of the multi-type branching process $Z$. This means that if we start $Z$ from an initial distribution of particles with, in average, $f(i)$ particles of type $i$ for all $i\geq 1$, then at generation $\ell$, there are, in average, exactly $\lambda^\ell f(i)$ particles of type $i$. But since $f(k) = 1$, this means by positivity that starting $Z$ from a single particle of type $k$, the average number of particle of type $i \geq 1$ at generation $n$ is at most 
 $\lambda^n f(i)$. Therefore, we find that
\begin{multline*}
\P(|\T_{\tau_k}| > \ell) \leq
\P\Big(\substack{\hbox{there exists a particle of} \\ \hbox{non-zero type at generation $\ell$}} \,\Big| \, Z(o) = k\Big)\\
\leq \E\Big[
\substack{\hbox{number of non-zero type} \\ \hbox{particles at generation $\ell$}}
\,\Big| \, Z(o) = k\Big] \leq \sum_{i\geq 1} f(i)\lambda^\ell \leq \frac{\rho^k \lambda^\ell}{\rho-1}.
\end{multline*}
Let us now fix $c > \frac{\log(\rho)}{\log(1/\lambda)}$. It follows that $\P(|\T_{\tau_k}| > c k)$ decreases exponentially fast, hence $|\T_{\tau_k}| \leq c k$ for all $k$ large enough a.s. On the other hand, Lemma \ref{lemma:exptau} states that $\tau_k \geq e^{\delta k}$ for all $k$ large enough a.s. Combining these two facts, we conclude that we can choose $c_2$ large enough such that $|\T_{n}| \leq c_2 \log n$ for all $n$ large enough a.s. This proves the upper bound. 

In order to establish the lower bound, we compare $(|S_n|, n\geq 0)$ with a biased random walk. We observe that at each step, the probability that the walk increases its height at the next step is always larger than the probability that the walk creates at least one new child (with probability $1-\nu(0)$) and then move to a child rather than the father at the next step (with probability at least $1/(1+\rho)$ since the vertex now has at least one child).  Therefore, 
$$
\P\big(|S_{n+1}| - |S_{n}| = 1 \,\big|\, |S_0|,|S_1|,\ldots, |S_n|\big) \geq q \defeq \frac{1-\nu(0)}{1+\rho} .
$$
This show that we can couple $(|S_n|, n\geq 0)$ with a $q$-biased nearest-neighbor random walk $(X_n,n\geq 0)$ reflected at $0$ in such way that $|S_n| \geq X_n$ for all $n$. We conclude using classical estimates on random walks that we can choose $c_1 > 0$ small enough such that
$$
|\T_{n}| \geq \sup_{i\leq n} |S_i| \geq \sup_{i\leq n} |X_i| \geq c_1 \log n\quad\hbox{for all $n$ large enough a.s.}
$$
\end{proof}

\section*{Acknowledgement}

This work is a collaborative project initiated during an open problem session at the conference \emph{Processes on Graphs and Maps days} (also fondly known as \emph{La Bardade au CIRM}), which took place in November 2024 and was funded by ANR 19-CE40-0025 ``ProGraM''. The authors would like to thank once again Eleanor Archer and Laurent Ménard for organizing this great meeting. 

We also warmly thank Alessandra Caraceni for stimulating discussions and for making simulations of the process during the early stage of this work.

\bibliographystyle{plain}
\bibliography{biblio}

\begin{thebibliography}{10}

\bibitem{AMIRal}
Gideon Amir, Itai Benjamini, Ori Gurel-Gurevich, and Gady Kozma.
\newblock Random walk in changing environment.
\newblock {\em Stochastic Processes and their Applications},
  130(12):7463--7482, 2020.

\bibitem{BasdevantSingh}
Anne-Laure Basdevant and Arvind Singh.
\newblock {Recurrence and transience of a multi-excited random walk on a
  regular tree}.
\newblock {\em Electronic Journal of Probability}, 14(none):1628 -- 1669, 2009.

\bibitem{BasdevantSingh2}
Anne-Laure Basdevant and Arvind Singh.
\newblock {Continuous-time vertex reinforced jump processes on Galton–Watson
  trees}.
\newblock {\em The Annals of Applied Probability}, 22(4):1728 -- 1743, 2012.

\bibitem{BerardRamirez07}
Jean B\'erard and Alejandro Ram\'irez.
\newblock Central limit theorem for the excited random walk in dimension
  {$D\geq 2$}.
\newblock {\em Electron. Comm. Probab.}, 12:303--314, 2007.

\bibitem{TBRWEnglander2}
Janos Engländer, Giulio Iacobelli, Gábor Pete, and Rodrigo Ribeiro.
\newblock Structural results for the tree builder random walk, 2023.

\bibitem{TBRWEnglander}
János Engländer, Giulio Iacobelli, and Rodrigo Ribeiro.
\newblock Recurrence, transience and degree distribution for the tree builder
  random walk, 2023.

\bibitem{TBRWtransience}
Daniel Figueiredo, Giulio Iacobelli, Roberto Oliveira, Bruce Reed, and Rodrigo
  Ribeiro.
\newblock {On a random walk that grows its own tree}.
\newblock {\em Electronic Journal of Probability}, 26(none):1 -- 40, 2021.

\bibitem{Harris}
Theodore~E. Harris.
\newblock {\em The theory of branching processes}.
\newblock Die Grundlehren der Mathematischen Wissenschaften, Bd. 119.
  Springer-Verlag, Berlin, 1963.

\bibitem{NRRW}
Giulio Iacobelli, Daniel~R. Figueiredo, and Giovanni Neglia.
\newblock Transient and slim versus recurrent and fat: Random walks and the
  trees they grow.
\newblock {\em Journal of Applied Probability}, 56(3):769–786, 2019.

\bibitem{TBRWZuaznabar}
Giulio Iacobelli, Rodrigo Ribeiro, Glauco Valle, and Leonel Zuaznábar.
\newblock Tree builder random walk: Recurrence, transience and ballisticity.
\newblock {\em Bernoulli}, 28, 02 2022.

\bibitem{KKS75}
H.~Kesten, M.~V. Kozlov, and F.~Spitzer.
\newblock A limit law for random walk in a random environment.
\newblock {\em Compositio Mathematica}, 30(2):145--168, 1975.

\bibitem{LPP}
Russell Lyons, Robin Pemantle, and Yuval Peres.
\newblock Biased random walks on {Galton--Watson} trees.
\newblock {\em Probability Theory and Related Fields}, 106(2):249--264, October
  1996.

\bibitem{Piau}
Didier Piau.
\newblock {Th\'eor\`eme central limite fonctionnel pour une marche au hasard en
  environnement aléatoire}.
\newblock {\em The Annals of Probability}, 26(3):1016 -- 1040, 1998.

\bibitem{ribeiroTBRW}
Rodrigo Ribeiro.
\newblock Renewal structure of the tree builder random walk, 2024.

\bibitem{Seneta}
E.~Seneta.
\newblock {\em Non-negative Matrices and Markov Chains}.
\newblock Springer Series in Statistics. Springer New York, 2006.

\bibitem{Sznitman00}
Alain-Sol Sznitman.
\newblock Slowdown estimates and central limit theorem for random walks in
  random environment.
\newblock {\em Journal of the European Mathematical Society}, 002(2):93--143,
  2000.

\bibitem{SznitmanZerner99}
Alain-Sol Sznitman and Martin Zerner.
\newblock {A Law of Large Numbers for Random Walks in Random Environment}.
\newblock {\em The Annals of Probability}, 27(4):1851 -- 1869, 1999.

\bibitem{Toth96}
B{\'a}lint T{\'o}th.
\newblock {Generalized Ray-Knight theory and limit theorems for
  self-interacting random walks on $\mathbb{Z}^1$}.
\newblock {\em The Annals of Probability}, 24(3):1324 -- 1367, 1996.

\bibitem{TothWerner98}
B\'alint T\'oth and Wendelin Werner.
\newblock The true self-repelling motion.
\newblock {\em Probab. Theory Related Fields}, 111(3):375--452, 1998.

\bibitem{Zeitouni02}
Ofer Zeitouni.
\newblock {\em Lectures on probability theory and statistics}, volume 1837 of
  {\em Lecture Notes in Mathematics}.
\newblock Springer-Verlag, Berlin, 2004.
\newblock Lectures from the 31st Summer School on Probability Theory held in
  Saint-Flour, July 8--25, 2001.

\end{thebibliography}
\end{document}